\documentclass[11pt,reqno,oneside]{amsart}
\usepackage{amsfonts,amssymb,amsmath,epic,eepic,float,fullpage}
\usepackage[normalem]{ulem}
\usepackage{longtable}
\usepackage[usenames,dvipsnames]{color}
\usepackage{siunitx}
\usepackage{empheq}
\usepackage[mathscr]{euscript}
\usepackage[T1]{fontenc}
\usepackage{mathtools}
\usepackage{comment}
\usepackage{bm}
\usepackage{caption}
\usepackage{hyperref}
\usepackage{enumerate,enumitem}
\usepackage{tikz}
\usetikzlibrary{graphs,patterns,decorations.markings,arrows,matrix}
\usetikzlibrary{calc,decorations.pathmorphing,decorations.pathreplacing,shapes}
\allowdisplaybreaks

\newtheorem{thm}{Theorem}[section]
\newtheorem*{thm*}{Theorem}

\newtheorem{prop}[thm]{Proposition}

\newtheorem{ques}[thm]{Question}
\newtheorem{cor}[thm]{Corollary}

\theoremstyle{definition}
\newtheorem{defn}[thm]{Definition}

\theoremstyle{remark}
\newtheorem{rmk}[thm]{Remark}

\theoremstyle{remark}
\newtheorem{ex}[thm]{Example}
\long\def\junk#1{}


\def\leq{\leqslant}
\def\geq{\geqslant}

\def\l{\lambda}
\def\m{\mu}
\def\n{\nu}
\def\GG{\mathfrak{G}}
\def\JJ{\mathfrak{J}}
\def\T{\mathbb{T}}
\def\TT{\mathbb{T}^{*}}
\def\t{\mathrm{T}}
\def\tt{\mathrm{T}^{*}}
\def\buv{(\mathbf{u},\mathbf{v})}
\def\bvu{(\mathbf{v},\mathbf{u})}

\colorlet{lgray}{white!85!black}
\colorlet{lblue}{white!85!blue}
\colorlet{lblack}{white!85!black}
\colorlet{lred}{white!85!red}
\colorlet{lgreen}{white!80!green}
\colorlet{dgreen}{black!30!green}
\definecolor{green}{rgb}{0.1,0.8,0.1}
\definecolor{yellow}{rgb}{1.0,0.85,0.25}
\usepackage{ytableau}

\newcommand{\bra}[1]{\left\langle #1\right|}
\newcommand{\ket}[1]{\left|#1\right\rangle}

\renewcommand{\tikz}[2]{
\begin{tikzpicture}[scale=#1,baseline=(current bounding box.center),>=stealth]
#2
\end{tikzpicture}}



\def\AA{\bm{A}}
\def\BB{\bm{B}}
\def\CC{\bm{C}}
\def\DD{\bm{D}}
\def\l{{\sf L}}
\def\m{{\sf M}}
\def\n{{\sf N}}

\renewcommand\le{\leq}

\newcommand{\ba}{\mathbf{A}}
\newcommand{\bb}{\mathbf{B}}
\newcommand{\bc}{\mathbf{C}}
\newcommand{\bd}{\mathbf{D}}
\newcommand{\ff}{\mathbb{F}}

\renewcommand\ss{\scriptstyle}

\numberwithin{equation}{section}



\title{Inhomogeneous $q$-Whittaker Polynomials I: Duality and Expansions}
\author{Ajeeth Gunna \and Michael Wheeler \and Paul Zinn-Justin}

\address{Ajeeth Gunna, Michael Wheeler, and Paul Zinn-Justin, School of Mathematics and Statistics, University of Melbourne, Australia}
\email{ajeeth.gunna@unimelb.edu.au}
\email{wheelerm@unimelb.edu.au}
\email{pzinn@unimelb.edu.au}

\begin{document}
\begin{abstract}
We introduce a new family of symmetric polynomials $\mathfrak{G}^{\buv}_{\lambda}$ arising from exactly solvable lattice models associated with the quantised loop algebra $\mathcal{U}_{q}(\mathfrak{sl}_{2}[z^\pm])$. The polynomials  $\mathfrak{G}^{\buv}_{\lambda}$ unify $q$-Whittaker polynomials, inhomogeneous $q$-Whittaker polynomials, Grothendieck polynomials and their duals. Using Yang--Baxter equation, we derive Cauchy identities and combinatorial formulas for the transition coefficients.
\end{abstract}

\maketitle
\setcounter{tocdepth}{1}
\makeatletter
\def\l@subsection{\@tocline{2}{0pt}{2.5pc}{5pc}{}}
\makeatother
\tableofcontents
\section{Introduction}
\subsection{Symmetric polynomials}
Symmetric polynomials (that is, polynomials which are invariant under permutation of variables) are ubiquitous in mathematical physics and related areas. The prototypical example is that of \textbf{Schur polynomials}, a basis of symmetric polynomials denoted below $s_\lambda$ where the index $\lambda$ is a partition. They appear (i) in physics as the wave functions of one-dimensional quantum free fermions; (ii) in probability theory under the name of Schur process, the quintessential determinantal process; (iii) in representation theory as characters of the general linear group and; (iv) in geometry as (representatives of) Schubert classes in the Grassmannian. There are various interrelations between these interpretations (in particular, (i) and (iv) are related to \emph{classical}\/ integrable systems such as the Kadomtsev–Petviashvili hierarchy).
One should also point out that quantum free fermionic models are a special case of \textbf{quantum integrable systems}, which is the point of view that underlies the approach of the present paper; this will be described in more detail below, see \S\ref{ssec:lattice}.

There is a large combinatorial literature devoted to
Schur polynomials, including various expressions (e.g., tableau formulae) and identities which they satisfy (e.g., Cauchy identity, Littlewood identities). One often views them as so-called \emph{symmetric functions} (loosely, symmetric polynomials in an indeterminate number of variables), and in this introduction we shall identify Schur polynomials with the corresponding Schur functions, and similarly for other families.
Define then the Hall inner product to be the
scalar product on the ring of symmetric functions $\Lambda$ under which the Schur functions are orthonormal:
\[
\langle s_{\lambda}, s_{\mu} \rangle = \delta_{\lambda, \mu},
\]
and the (ring) involution $\omega$, which acts
on power-sums as $\omega(p_k)=(-1)^{k-1}p_k$, and satisfies
\[
\omega(s_{\lambda}) = s_{\lambda'}.
\]
where $\lambda'$ is the conjugate partition of $\lambda$.

The present work is devoted to the study of certain families of symmetric rational functions. One can view them as broad generalisations of Schur functions, which encompass many known ones; we describe some of the latter in the next few sections.
These generalisations retain some, but usually not all, of the interpretations above; in particular, they underlie a variety of physical or probabilistic models.


\subsection{Grothendieck polynomials}
Grothendieck polynomials $(G_{\lambda})$ were introduced by Lascoux and Schützenberger~\cite{Gcom:LS1983} 
as polynomial representatives of $K$-classes of Schubert varieties. \textbf{Symmetric Grothendieck polynomials} were studied combinatorially by Buch~\cite{Gcom:B2002}. 
They are inhomogeneous and their lowest degree component coincides with the corresponding Schur polynomial. 
Lam and Pylyavskyy~\cite{Gcom:LP2007} defined three additional families: 
dual Grothendieck polynomials $(g_{\lambda})$, weak Grothendieck polynomials $(J_{\lambda})$, and weak dual Grothendieck polynomials $(j_{\lambda})$. 
Together these four families satisfy:
\begin{align*}
\langle G_{\lambda}, g_{\mu} \rangle &= \delta_{\lambda, \mu}, & \qquad
\langle J_{\lambda}, j_{\mu} \rangle &= \delta_{\lambda, \mu}, \\[0.5em]
\omega(G_{\lambda}) &= J_{\lambda}, & \qquad
\omega(g_{\lambda}) &= j_{\lambda}.
\end{align*}

Since their introduction the family Grothendieck found applications to totally asymmetric simple exclusion process (TASEP)~\cite{gcom:Y2021RP}, the last-passage percolation (LPP) corner growth model ~\cite{gcom:Y2020per}, to enumeration of plane partitions~\cite{gcom:Y2021PP}, and in the context of \emph{Brill--Noether varieties}~\cite{Gcom:2021CP} among others. Generalizations of these polynomials have also been extensively studied, often referred to as ``refined'' versions~\cite{gcom:GGL2015Refined,gcom:HJKS2024Refined,Glat:MScr2025}.

\subsection{$q$-Whittaker polynomials and Hall--Littlewood polynomials}
Two more generalisations arise as follows.
\textbf{Macdonald polynomials} $P_{\lambda}(x;q,t)$ form a two-parameter family of symmetric functions which generalise Schur polynomials (at $q=t$). 
They may be characterised as the joint eigenfunctions of the Macdonald difference operators; see~\cite[Ch.~VI]{Macdonald}.
They are orthogonal under some $q,t$-deformation of the Hall inner product;
more precisely,
the dual Macdonald polynomial $Q_{\lambda}$
defined by
$\langle P_\lambda,Q_\mu \rangle_{q,t} = \delta_{\lambda,\mu}$
is related to $P_\lambda$ by
\[
Q_{\lambda}(x;q,t) = b_{\lambda}(q,t)\, P_{\lambda}(x;q,t),
\]
where $b_{\lambda}(q,t)$ is a scalar depending only on the partition $\lambda$ and the parameters $q,t$,
whose explicit form will not be required in what follows. We have emphasised in the formula above the dependence on both the alphabet $x=(x_1,\ldots,x_n)$ and the two parameters $q,t$.

\medskip
The \textbf{Hall--Littlewood polynomials} $P_\lambda(x;t)$ are the $q=0$ specialisation of Macdonald polynomials. 
They interpolate between monomial symmetric polynomials $(t=1)$ and Schur polynomials $(t=0)$. 
They were introduced by Hall in the study of Hall algebras;
they also arise in a variety of geometric contexts (cohomology of Springer fibers, Borel--Weil construction on cotangent bundles of flag varieties, etc) 
and the representation theory of finite general linear groups.

\medskip
The \textbf{$q$-Whittaker polynomials} $W_\lambda(x;q)$ are then defined as the $t=0$ specialisation of Macdonald polynomials. They also appear as polynomial eigenfunctions of the $q$-deformed Toda chain, and as characters of certain subspaces of integrable representations of affine algebras.

A key structural feature of the Macdonald theory is the $(q,t)$-involution on symmetric functions $\omega_{q,t}$, which deforms the involution $\omega$ and is defined by
$
\omega_{q,t}(p_{k}) \,=\, (-1)^{k-1}\frac{1-q^{k}}{1-t^{k}}\,p_{k}
$.
Then the involution acts on Macdonald polynomials in the following way:
\[
\omega_{q,t}\!\left(P_{\lambda}(x;q,t)\right)
= Q_{\lambda'}(x;t,q),
\]


\medskip
Specializing this identity to the case $t=0$, we obtain
\[
\omega_{q}\!\left(W_{\lambda}(x;q)\right)
=b_{\lambda'}(q) P_{\lambda'}(x;q).
\]
where $\omega_q=\omega_{q,0}$, $b_{\lambda'}(q)=b_{\lambda'}(q,0)$
and $P_{\lambda'}(x;q)$ is the Hall--Littlewood polynomial (with $t$ replaced by $q$).

\subsection{Lattice models}\label{ssec:lattice}
As alluded above, our approach to the study of these
families of symmetric polynomials involves quantum integrability.
It has been known for decades that various expressions in quantum integrable systems are symmetric polynomials or rational functions, such as certain Bethe wave functions, partition functions or correlation functions. However, 
the systematic study of symmetric polynomials using a particular incarnation of integrability that is most suitable for this purpose, namely two-dimensional exactly solvable lattice models, took off more recently (as first advocated in \cite{PZJ-hdr}; see also~\cite{slat:BBFSchurYBE}.
One expresses the desired polynomials as partition functions of such lattice models, and then, by leveraging the underlying \emph{Yang--Baxter equation}, one derives various identities for them.

Lattice models for the family of Grothendieck polynomials~\cite{Glat:GZJ2023,Glat:MScr2025,Glat:MS2013,Glat:WZJ2019LR}, as well as for
Hall--Littlewood~\cite{Garbali2017NewGeneralisation,Hlat:K2013Cylindric,Hlat:T2006,Hlat:WZJ2016} and $q$-Whittaker polynomials, have been constructed. We observe that all such lattice models are obtained by using the vertex weights associated with the most general solution of the \textbf{Yang--Baxter equation} arising from the quantised loop algebra $\mathcal{U}_{q}(\mathfrak{sl}_{2}[z^\pm])$. This provides a strong guiding principle for further generalisations, and indeed,
the new families which we introduce in this paper are of the same form, which is how we discovered them.
In this way, lattice models provide a unifying framework for families of symmetric polynomials that arise in diverse contexts. 

\subsection{Main objectives}
In~\cite{spin-BK2024}, a family of polynomials 
\(\mathbb{F}^{(\mathbf{a},\mathbf{b})}_{\lambda}(x_{1},\dots,x_{n})\) 
was introduced, depending on two families of parameters  
\(\mathbf{a}=(a_{1},\dots,a_{n})\) and \(\mathbf{b}=(b_{1},\dots,b_{n})\);  
these polynomials are referred to as the \textbf{inhomogeneous spin $q$-Whittaker polynomials}. 

\medskip
In this work, we consider only the \(\mathbf{a} = (0,\dots,0)\) specialization of $\mathbb{F}_{\lambda}$, and refer to them as \textbf{inhomogeneous $q$-Whittaker polynomials}. These polynomials are symmetric and inhomogeneous. Their lowest degree coincides with $q$-Whittaker polynomial. It was observed in~\cite{spinWhitt-Muc} that, at $q=0$, the inhomogeneous $q$-Whittaker polynomials $\mathbb{F}_{\lambda}$ reduce to Grothendieck polynomials.

\medskip
In this paper, our main objective is to answer the following question:
\begin{ques}
Are there symmetric polynomials $\mathbb{G}_{\lambda}$ that are a generalisation of $q$-Whittaker polynomials and are dual to $\mathbb{F}_{\lambda}$ under the $q$-Hall inner product?    
\end{ques}

We answer this question by introducing the set of polynomials with two sets of parameters $\mathfrak{G}^{\buv}_{\lambda}$, which contain $q$-Whittaker polynomials, inhomogeneous $q$-Whittaker polynomials, Grothendieck polynomials, and their duals. In particular,
at $\mathbf{v}=\mathbf{0}$, they reduce to $\mathbb{F}_{\lambda}$. We define \emph{dual Inhomogeneous $q$-Whittaker polynomials}$(\mathbb{G}_{\lambda})$ as $\mathbf{u}=\mathbf{0}$ specialization of $\mathfrak{G}^{\buv}_{\lambda}$. The duality of $\mathbb{F}_{\lambda}$ and $\mathbb{G}_{\lambda}$ then follows from the following Cauchy identity for $\mathfrak{G}^{\buv}_{\lambda}$:
\begin{thm*}[Theorem~\ref{thm:spinL_Cauchy_qwhittaker}]\label{thm:cauchy_intro}
Fix two positive numbers $n$ and $m$, and let $\lambda$ and $\mu$ be two partitions. Then the family of $q$-Whittaker polynomials satisfy the following summation identity (assuming that all the variables are in the unit disc):
\begin{multline}
\,\sum_{\kappa}\,\dfrac{c_{\kappa'}}{c_{\lambda'}}\,\GG^{(\mathbf{v},\mathbf{u})}_{\kappa/\lambda}(x_{1},\dots,x_{n})\, \GG^{\buv}_{\kappa/\mu}(y_{1},\dots,y_{m})\,   \\
\,=\,\prod^{n}_{j=1}\prod^{m}_{i=1}\,\dfrac{1}{(x_{i}y_{j};q)_{\infty}}\,\sum_{\kappa}\,\dfrac{c_{\mu'}}{c_{\kappa'}}\,\GG^{(\mathbf{v},\mathbf{u})}_{\mu/\kappa}(x_{1},\dots,x_{n})\,\GG^{\buv}_{\lambda/\kappa}(y_{1},\dots,y_{m}),
\end{multline}
with the left hand sum taken over all partitions that contain both $\mu$ and $\lambda$, and on the right hand side sum is taken over all partition that are contained in both $\lambda$ and $\mu$.
\end{thm*}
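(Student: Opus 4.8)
The plan is to realise both sides of the identity as partition functions of the $\mathcal{U}_q(\mathfrak{sl}_2[z^\pm])$ lattice model and to interpolate between them using the Yang--Baxter equation, i.e. by a standard ``train'' (zipper) argument.

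First I would use the lattice-model presentation of the skew polynomials. Recall that $\GG^{\buv}_{\kappa/\lambda}(y_1,\dots,y_m)$ is the partition function of an $m$-row strip with row rapidities $y_1,\dots,y_m$, carrying the frozen column data determined by $(\mathbf u,\mathbf v)$, whose bottom vertical boundary reads off $\lambda$ and whose top vertical boundary reads off $\kappa$; likewise $\GG^{\bvu}_{\kappa/\lambda}(x_1,\dots,x_n)$ is the partition function of the companion model obtained under the exchange $\mathbf u\leftrightarrow\mathbf v$, with $n$ rows of rapidities $x_1,\dots,x_n$. The double sum on the left-hand side then equals the partition function of the combined lattice in which the $n$ rapidity-$x$ rows sit above the $m$ rapidity-$y$ rows, the sum over $\kappa$ implementing the contraction of all horizontal edges along the interface, with outer vertical boundaries fixed by $\lambda$ (on the $x$-side) and $\mu$ (on the $y$-side), and with the factor $c_{\kappa'}/c_{\lambda'}$ recording the frozen weights carried by the boundary columns (the column labels being governed by conjugate partitions). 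The right-hand side is the partition function of the same combined lattice but with the two blocks of rows interchanged, together with the stated scalar prefactor. (Equivalently, in operator language, these skew functions are matrix elements of products of row-transfer operators, and the identity is the commutation relation between the monodromy matrices built from $\mathbf x$ and $\mathbf y$.)

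Next I would attach, at one vertical boundary of the combined lattice, the intertwining vertex $\check R(x_i/y_j)$ furnished by the Yang--Baxter equation for $\mathcal{U}_q(\mathfrak{sl}_2[z^\pm])$ --- the one that braids a rapidity-$x$ horizontal line past a rapidity-$y$ horizontal line --- and push it across the lattice, column by column, via the $R\!L\!L$ form of the Yang--Baxter equation. Doing this for every pair $(i,j)$ drags all $m$ of the $y$-rows above all $n$ of the $x$-rows, so the left-hand configuration becomes the right-hand one, now with interior partition $\kappa$ contained in both $\lambda$ and $\mu$ and with boundary normalisation $c_{\mu'}/c_{\kappa'}$. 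When the braiding vertices reach the far, empty boundary they act on frozen edges and evaluate to scalars; the product of these over all $i,j$ is precisely $\prod_{j=1}^{n}\prod_{i=1}^{m}(x_iy_j;q)^{-1}_{\infty}$, the infinite $q$-Pochhammer arising as the vacuum-to-vacuum component of $\check R$ between two bosonic Fock spaces, equivalently $\sum_{r\ge 0}(x_iy_j)^r/(q;q)_r$.

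The explicit vertex and $R$-weights, and the check that the frozen boundary is preserved under braiding, are routine. The main obstacle is the interplay of convergence and normalisation bookkeeping: the interface sums over $\kappa$ and the infinite products converge only for $|x_i|,|y_j|<1$, so one must justify performing the train argument in an infinite lattice under this hypothesis (controlling the tail of frozen columns); and one must verify that crossing the nontrivial, partition-carrying boundary column multiplies the partition function by exactly $c_{\kappa'}/c_{\lambda'}$ beforehand and $c_{\mu'}/c_{\kappa'}$ afterwards, with no residual factor --- which reduces to a short computation relating the column weights to the conjugate partition.
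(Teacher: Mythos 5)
Your proposal is correct and follows the paper's strategy at the top level: both sides are realised as the two ways of evaluating $\bra{\lambda'}\mathbb{T}^{*}(x_{1})\cdots\mathbb{T}^{*}(x_{n})\,\mathbb{T}(y_{1})\cdots\mathbb{T}(y_{m})\ket{\mu'}$, either by inserting a complete sum over $\kappa$ at the interface or by first commuting all the $y$-rows past the $x$-rows, with the ratios $c_{\kappa'}/c_{\lambda'}$ and $c_{\mu'}/c_{\kappa'}$ accounted for by the gauge relation between the weights $\mathbb{W}$ and the dual weights $\mathbb{W}^{*}$. The one place where you genuinely diverge is in how the exchange relation $\mathbb{T}^{*}(x)\,\mathbb{T}(y)=(xy;q)_{\infty}^{-1}\,\mathbb{T}(y)\,\mathbb{T}^{*}(x)$ is established. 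You propose a direct train argument at the fused level, reading off the prefactor as the vacuum-to-vacuum entry $\sum_{r\ge 0}(xy)^{r}/(q;q)_{r}=(xy;q)_{\infty}^{-1}$ of a bosonic braiding vertex; this is plausible, but it presupposes that you can exhibit the $\check R$-vertex intertwining a dual fused row with a fused row and verify the corresponding Yang--Baxter equation, which is precisely the nontrivial input and is not among the YBEs the paper records. The paper instead runs the train argument only at spin~$1$, where the cross has the explicit entries of~\eqref{weights:Rmatrix_spin1_dual} and the occupied-boundary term is killed by the convergence hypothesis, obtaining $\mathrm{T}^{*}(x)\mathrm{T}(y)=\frac{1-qxy}{1-xy}\mathrm{T}(y)\mathrm{T}^{*}(x)$; it then fuses, specialising $x_{i}=xq^{i-1}$, $y_{j}=yq^{j-1}$ so the rational prefactors telescope to $(xyq^{J};q)_{I}/(xy;q)_{I}$, and takes the limits $q^{-I},q^{-J}\to 0$ to produce $(xy;q)_{\infty}^{-1}$. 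The fusion route buys you the scalar factor and its domain of validity for free from the already-proved spin-$1$ relation, at the cost of the bookkeeping in the appendix; your route is more direct but leaves the key lemma (the fused $\check R$ and its YBE) unproved rather than routine.
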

We also study a family of symmetric rational functions $\mathfrak{J}^{\buv}_{\lambda/\mu}$, related to $\mathfrak{G}^{\buv}_{\lambda/\mu}$ through $\omega_q$, which simultaneously generalise Hall–Littlewood polynomials, weak Grothendieck polynomials, and their duals. There are similar Cauchy identities involving $\mathfrak{J}^{\buv}_{\lambda/\mu}$ (Theorems~\ref{thm:cauchy_spin1} and \ref{thm:cauchy_dual}).
We note that the rational functions $\mathfrak{J}^{\buv}_{\lambda/\mu}$ have previously appeared in~\cite{Garbali2017NewGeneralisation}, although the corresponding Cauchy identities were not recorded there. Various Cauchy identities have been established for generalisations of Hall--Littlewood and $q$-Whittaker polynomials~\cite{spin-Bor2017,spin-BK2024,spin-BW2021,spin-chen2021,Garbali2017NewGeneralisation,spin-K2024,spin-MP}. 
Our Cauchy identities involving $\mathfrak{G}^{\buv}_{\lambda/\mu}$ are unrelated to these earlier results, except in the specialization where they reduce to the standard Cauchy identity for $q$-Whittaker polynomials.

The polynomials introduced in this paper, $\GG_{\lambda/\mu}^{\buv}$, and those in~\cite{spin-BK2024}, $\ff_{\lambda/\mu}^{(\mathbf{a},\mathbf{b})}$, are different generalisations of $q$-Whittaker polynomials. A quick way to see this is to compare the Boltzmann weights used in their definitions: the weights for $\ff_{\lambda/\mu}^{(\mathbf{a},\mathbf{b})}$ are completely factorised, whereas the weights~\eqref{weights:spinL_uv} used for $\GG_{\lambda/\mu}^{\buv}$ are not. Moreover, the two families satisfy different Cauchy identities; compare Theorem~5.8 of~\cite{spin-K2024} with Theorem~\ref{thm:spinL_Cauchy_qwhittaker}.
Nevertheless, they both reduce to the ordinary $q$-Whittaker polynomials as $\ff^{(\mathbf{0},\mathbf{0})}_{\lambda}$ and $\GG^{(\mathbf{0},\mathbf{0})}_{\lambda}$, and they both also specialise to the inhomogeneous $q$-Whittaker polynomials as $\ff^{(\mathbf{0},\mathbf{b})}_{\lambda}$ and $\GG^{(\mathbf{b},\mathbf{0})}_{\lambda}$.

\medskip
Another goal of the paper is to answer:
\begin{ques}\label{ques}
Find a combinatorial formula for the coefficients that appear in the following expansions:
\begin{align}
    W_{\lambda}(x_{1},\dots,x_{n})\,&=\,\sum_{\mu}\,a_{\lambda,\mu}(q)\,\mathbb{F}_{\mu}(x_{1},\dots,x_{n})\\
    \mathbb{F}_{\lambda}(x_{1},\dots,x_{n})\,&=\,\sum_{\mu}\,b_{\lambda,\mu}(q)\,{W}_{\mu}(x_{1},\dots,x_{n})\\
    \mathbb{G}_{\lambda}(x_{1},\dots,x_{n})\,&=\,\sum_{\mu}\,c_{\lambda,\mu}(q)\,{W}_{\mu}(x_{1},\dots,x_{n})
\end{align}
\end{ques}

We answer these problems by obtaining a combinatorial formula for a general expansion in Theorem~\ref{thm:general_expansion}:
\begin{equation}
\GG^{\buv}_{\lambda}(x_{1},\dots,x_{n})\,=\,\sum_{\mu}\,d_{\lambda,\mu}(q)\,\GG^{(\mathbf{y},\mathbf{0})}_{\mu}(x_{1},\dots,x_{n}),
\end{equation}  

Then, by specializing, we obtain explicit combinatorial formulas for the coefficients in Question~\ref{ques}. In particular, we find that $a_{\lambda,\mu}(q) \in \mathbb{N}[q]$, which implies that the $q$-Whittaker polynomials ($W_{\lambda}$) expand positively in terms of the inhomogeneous $q$-Whittaker polynomials ($\mathbb{F}_{\lambda}$). Furthermore, we obtain that $c_{\lambda,\mu}(q) \in \mathbb{N}[q]$, showing that the functions dual inhomogeneous $q$-Whittaker polynomials ($\mathbb{G}_{\lambda}$) are Schur-positive.

\subsection{Inversion relations and inverse expansions}
One essential feature of exactly solvable models, as first used to great effect by Baxter himself, is that global properties may be deduced from relations among local objects.  
The most standard example is the proof of the commutation of transfer matrices
by repeated application of the Yang--Baxter equation, which is a local relation satisfied by the building blocks ($L$-matrices) of the transfer matrix.
In the context of symmetric polynomials, this commutation relation implies invariance of our polynomials under switching two neighbouring variables, which in turn implies the full symmetry in the variables.
Symmetry -- being invariant under permutations of the variables -- is a global property, yet its proof rests entirely on a local relation.  

We observe a similar phenomenon regarding transition coefficients.
In the final section, we obtain a combinatorial formula for the coefficients appearing in the expansion
\[
W_{\lambda}(x_{1},\dots,x_{n})
   = \sum_{\mu} a_{\lambda,\mu}(q)\,
      \mathbb{F}_{\mu}(x_{1},\dots,x_{n}),
\]
where each coefficient $a_{\lambda,\mu}(q)$ can be expressed as the partition function of a lattice model with vertex weights
\[
\tikz{0.5}{
\draw[lgray,line width=4pt,->] (-1,0) -- (1,0);
\draw[lgray,line width=4pt,->] (0,-1) -- (0,1);
\node[left] at (-1,0) {\tiny $b$};
\node[right] at (1,0) {\tiny $d$};
\node[below] at (0,-1) {\tiny $a$};
\node[above] at (0,1) {\tiny $c$};
}
\;=\;
\mathbf{1}_{a+b=c+d}\,\mathbf{1}_{\,b\le c}\,
\binom{a}{\,c-b\,}_{q}.
\]

Similarly, we obtain an analogous combinatorial formula for the coefficients in the inverse expansion
\[
\mathbb{F}_{\lambda}(x_{1},\dots,x_{n})
   = \sum_{\mu} b_{\lambda,\mu}(q)\,
     W_{\mu}(x_{1},\dots,x_{n}),
\]
where
\[
\tikz{0.5}{
\draw[lgray,line width=4pt,->] (-1,0) -- (1,0);
\draw[lgray,line width=4pt,->] (0,-1) -- (0,1);
\node[left] at (-1,0) {\tiny $b$};
\node[right] at (1,0) {\tiny $d$};
\node[below] at (0,-1) {\tiny $a$};
\node[above] at (0,1) {\tiny $c$};
}
\;=\;
\mathbf{1}_{a+b=c+d}\,\mathbf{1}_{\,b\le c}\,
(-1)^{\,c-b}\,q^{\binom{c-b}{2}}\,
\binom{a}{\,c-b\,}_{q}.
\]

We observe that the weights defining $a_{\lambda,\mu}(q)$ and $b_{\lambda,\mu}(q)$ satisfy an inversion relation.  
In other words, once a formula for the coefficients $a_{\lambda,\mu}(q)$ in the expansion 
\(
W_{\lambda}=\sum_{\mu} a_{\lambda,\mu}(q)\,\ff_{\mu}
\)
is known as a partition function, one only needs to invert these local weights to obtain a combinatorial formula for the inverse expansion
\(
\ff_{\lambda}=\sum_{\mu} b_{\lambda,\mu}(q)\,W_{\mu}.
\)

\medskip
In our setting, however, we did not need to carry out this inversion procedure, since we obtained a more general expansion that simultaneously contains both of the desired formulas as special cases.

\subsection{Layout of the paper}
The paper is organised as follows. In Section~\ref{sec:YBE}, we present the most general solution of the Yang--Baxter equation arising from the quantised loop algebra $\mathcal{U}_{q}(\mathfrak{sl}_{2}[z^\pm])$.

In Section~\ref{sec:HL-family}, we define lattice models with a fundamental horizontal line and use them to introduce the functions
 $\mathfrak{J}^{\buv}_{\lambda}$. In Section~\ref{subsec:HL-degenerations}, we derive a branching formula for $\mathfrak{J}^{\buv}_{\lambda}$ and explain how these polynomials specialise to Hall--Littlewood polynomials, weak Grothendieck polynomials and their duals.

In Section~\ref{sec:q-whitt-family}, we define the polynomials $\mathfrak{G}^{\buv}_{\lambda}$ using lattice models with a fused horizontal line. We then prove a Cauchy identity for $\mathfrak{G}^{\buv}_{\lambda}$, as well as a dual Cauchy identity relating $\mathfrak{G}^{\buv}_{\lambda}$ and $\mathfrak{J}^{\buv}_{\lambda}$. In Section~\ref{subsec:qWhit-degenerations}, we discuss various degenerations of $\mathfrak{G}^{\buv}_{\lambda}$ and their connections to Grothendieck polynomials and their duals. In the final Section~\ref{sec:expansions}, we derive a combinatorial formula for the expansion of $\mathfrak{G}^{\buv}_{\lambda}$ in terms of the functions $\mathbb{F}_{\lambda}$, and in Section~\ref{subsec:Expansions_Degenerations} we study several degenerations of this expansion.

\subsection{Acknowledgements}
A.G. is grateful to Alexandr Garbali for generously sharing his code and for discussions, and Damir Yeliussizov for stimulating conversations. The authors also thank Matteo Mucciconi for valuable comments on an earlier draft.

\section{The Yang--Baxter equation}\label{sec:YBE}
\subsection{Lattice model}
In this section, we establish the notation and conventions used throughout the paper for constructing our lattice models. We realise the polynomials \( \GG^{\buv}_{\lambda} \) and \( \JJ^{\bvu}_{\lambda} \) as partition functions of lattice models. These models are built from vertices of the following type:
\[
\tikz{0.7}{
\draw[lgray,line width=1pt,->] (-1,0) -- (1,0);
\draw[lgray,line width=4pt,->] (0,-1) -- (0,1);
}
\hspace{2cm}
\tikz{0.7}{
\draw[lgray,line width=4pt,->] (-1,0) -- (1,0);
\draw[lgray,line width=4pt,->] (0,-1) -- (0,1);
}
\]

Each edge of a vertex is labelled by a non-negative integer. Thick lines indicate that the edge may be labelled by any non-negative integer, whereas thin lines indicate edges restricted to the labels \(0\) or \(1\). In the first vertex, the horizontal edges are restricted to the values \(0\) or \(1\), while the vertical edges may take any non-negative integer. In the second vertex, all four edges are allowed to take any arbitrary non-negative integers.

At the heart of the lattice model lies the principle of \emph{conservation}, which asserts that the sum of the labels on the bottom and left edges of a vertex equals the sum of the labels on the top and right edges. This condition admits a natural combinatorial interpretation: the label on each edge represents the number of particles it carries, with particles entering a vertex from the bottom and left, and exiting from the top and right. In this interpretation, conservation ensures that every particle that enters a vertex must also exit.

\[
\tikz{0.7}{
\draw[lgray,line width=5pt,->] (-1.2,0) -- (1.2,0);
\draw[lgray,line width=5pt,->] (-0.05,-1.2) -- (-0.05,1.2);
 \node at (0,-1.5) {$\ss 2$};
 \node at (-1.5,0) {$\ss 2$};
 \node at (0,1.5) {$\ss 3$};
 \node at (1.5,0) {$\ss 1$};
 \draw[->](0,-1.2)--(0,1.1);
 \draw[->](0.1,-1.2)--(0.1,0)--(1.1,0);
 \draw[->](-1.2,0)--(-0.1,0)--(-0.1,1.1);
 \draw[->](-1.2,0.1)--(-0.2,0.1)--(-0.2,1.1);
 \draw[->](0,-2.5) node[below] {$\ss y$}--(0,-2);
 \draw[->](-2.5,0) node[left] {$\ss x$}--(-2,0);
}
\]

For each vertex, we attach a weight that depends on two variables, called the spectral parameters \( x \) and \( y \). The variable \( x \) can be thought of as the rapidity of particles entering the vertex from the left, and \( y \) as the rapidity of those entering from the bottom.

We now construct a lattice by arranging these vertices in a rectangular array situated at the corner of the plane. The horizontal lines in the entire lattice are required to be consistently either all thick or all thin. In this setup, each row and column of the lattice is assigned a fixed spectral parameter.

\[
\tikz{1}{
\foreach \x in {1,2,3}{
\draw[lblack,line width= 4 pt,->] (0,\x-0.5)--(4,\x-0.5);
};
\foreach \x in {1,2,3,4}{
\draw[lblack,line width= 4 pt,->] (\x-0.5,0)--(\x-0.5,3);
};
\foreach \x in {1,2,3,4}{
 \draw[->](4.5-\x,-1) node[below] {$\ss y_{\x}$}--(4.5-\x,-0.5);
};
\foreach \x in {1,2,3}{
 \draw[->](-1,3.5-\x) node[left] {$\ss x_{\x}$}--(-0.5,3.5-\x);
};
}
\qquad
\tikz{1}{
\foreach \x in {1,2,3}{
\draw[lblack,line width= 4 pt,->] (0,\x-0.5)--(4,\x-0.5);
};
\foreach \x in {1,2,3,4}{
\draw[lblack,line width= 4 pt,->] (\x-0.5,0)--(\x-0.5,3);
};
\foreach \x in {1,2,3,4}{
 \draw[->](4.5-\x,-1) node[below] {$\ss y_{\x}$}--(4.5-\x,-0.5);
};
\foreach \x in {1,2,3}{
 \draw[->](-1,3.5-\x) node[left] {$\ss x_{\x}$}--(-0.5,3.5-\x);
};
\draw[->](0.45,-0.07)--(0.45,0.55)--(2.5,0.55)--(2.5,1.5)--(3.5,1.5)--(3.5,3.07);
\draw[->](0.55,-0.07)--(0.55,0.45)--(4,0.45);
\draw[->](0,1.55)--(1.5,1.55)--(1.5,3.07);
\draw[->](0,1.45)--(1.6,1.45)--(1.6,2.5)--(2.5,2.5)--(2.5,3.07);
}
\]

We refer to a labelling of all the edges in the lattice as a \emph{configuration}. Since the conservation law holds at every vertex, the total number of particles is conserved across the entire lattice. To each configuration, we assign a weight given by the product of the weights of all vertices in the configuration.

Given a fixed boundary, the sum of the weights of all possible configurations is called the \emph{partition function}. Our objective is to obtain our desired symmetric polynomials as partition functions of lattice models. 

\subsection{Yang--Baxter equation}
All the proofs of the theorems in this paper are straightforward applications of the celebrated Yang--Baxter equation (YBE). We now recall the Bošnjak--Mangazeev solution to the YBE in the case of the quantised loop algebra \( \mathcal{U}_q(\mathfrak{sl}_2[z^\pm]) \). Our main reference for this section is \cite[Appendix~C]{BorodinW-spectral2022}, from which we adopt our notation.

We consider the vertices of the following type:

\begin{equation}\label{eq:generic_Lvertex}
\tikz{0.7}{
\draw[lgray,line width=4pt,->] (-1,0) -- (1,0);
\draw[lgray,line width=4pt,->] (0,-1) -- (0,1);
\node[left] at (-1,0) {\tiny $b$};\node[right] at (1,0) {\tiny $d$};
\node[below] at (0,-1) {\tiny $a$};\node[above] at (0,1) {\tiny $c$};
\draw[->](-2.5,0) node[left] {$ (x,\l)$}--(-2,0);
\draw[->](0,-2.5) node[below] {$ (y,\m)$}--(0,-2);
}\,
=
\,
W_{\l,\m}\left(\dfrac{x}{y};q;\tikz{0.5}{
\draw[lgray,line width=1pt,->] (-1,0) -- (1,0);
\draw[lgray,line width=1pt,->] (0,-1) -- (0,1);
\node[left] at (-1,0) {\tiny $b$};\node[right] at (1,0) {\tiny $d$};
\node[below] at (0,-1) {\tiny $a$};\node[above] at (0,1) {\tiny $c$};
}\right)\,\equiv\,
W_{\l,\m}\left(\dfrac{x}{y};q;a,b,c,d\right) 
\end{equation}

Here, \( a, b, c, d \) are non-negative integers, and \( \l, \m \) are positive integers denoting the \emph{spin} of the corresponding lines; that is, the maximum number of particles a line can carry. The weights are constrained by the conditions \( a, c \leq \m \) and \( b, d \leq \l \); if these conditions are not satisfied, the corresponding weight is defined to be identically zero. Then the Yang--Baxter equation takes the form:

\begin{multline}\label{eq:YBE}
\sum_{c_{1},c_{2},c_{3}}
\,W_{\l,\m}\left(\dfrac{x}{y};q;a_{1},a_{2},c_{1},c_{2}\right)
\,
W_{\l,\n}\left(\dfrac{x}{z};q;a_{3},c_{2},c_{3},b_{2}\right)
\,
W_{\m,\n}\left(\dfrac{y}{z};q;c_{3},c_{1},b_{3},b_{1}\right)\,=\\
\,
\sum_{c_{1},c_{2},c_{3}}\, W_{\m,\n}\left(\dfrac{y}{z};q;a_{3},a_{1},c_{3},c_{1}\right)\,W_{\l,\n}\left(\dfrac{x}{z};q;c_{3},a_{2},b_{3},c_{2}\right)W_{\l,\m}\left(\dfrac{x}{y};q;c_{1},c_{2},b_{1},b_{2}\right),
\end{multline}

where $a_{1},a_{2},a_{3},b_{1},b_{2},b_{3}$ are fixed non-negative integers, and the summation on both sides of the equation is over triples of non-negative integers $c_{1},c_{2},c_{3}$. Equivalently, we can represent the equation~\ref{eq:YBE} graphically:

\begin{align}\label{eq:YBE_pictorial}
\sum_{c_1,c_2,c_{3}}
\tikz{0.9}{
\draw[lgray,line width=4pt,->]
(-2,1) node[above,scale=0.6] {\color{black} $a_2$} -- (-1,0) node[below,scale=0.6] {\color{black} $c_2$} -- (1,0) node[right,scale=0.6] {\color{black} $b_2$};
\draw[lgray,line width=4pt,->] 
(-2,0) node[below,scale=0.6] {\color{black} $a_1$} -- (-1,1) node[above,scale=0.6] {\color{black} $c_1$} -- (1,1) node[right,scale=0.6] {\color{black} $b_1$};
\draw[lgray,line width=4pt,->] 
(0,-1) node[below,scale=0.6] {\color{black} $a_{3}$} -- (0,0.5) node[scale=0.6] {\color{black} $c_{3}$} -- (0,2) node[above,scale=0.6] {\color{black} $b_{3}$};
\node[left] at (-2.2,1) {$(x,\l) \rightarrow$};
\node[left] at (-2.2,0) {$(y,\m) \rightarrow$};
\draw[->](0,-2.5) node[below] {$(z,\n)$}--(0,-2);
}
\quad
=
\quad
\sum_{c_1,c_2,c_{3}}
\tikz{0.9}{
\draw[lgray,line width=4pt,->] 
(-1,1) node[left,scale=0.6] {\color{black} $a_2$} -- (1,1) node[above,scale=0.6] {\color{black} $c_2$} -- (2,0) node[below,scale=0.6] {\color{black} $b_2$};
\draw[lgray,line width=4pt,->] 
(-1,0) node[left,scale=0.6] {\color{black} $a_1$} -- (1,0) node[below,scale=0.6] {\color{black} $c_1$} -- (2,1) node[above,scale=0.6] {\color{black} $b_1$};
\draw[lgray,line width=4pt,->] 
(0,-1) node[below,scale=0.6] {\color{black} $a_{3}$} -- (0,0.5) node[scale=0.6] {\color{black} $c_{3}$} -- (0,2) node[above,scale=0.6] {\color{black} $b_{3}$};
\node[left] at (-1.5,1) {$(x,\l) \rightarrow$};
\node[left] at (-1.5,0) {$(y ,\m)\rightarrow$};
\draw[->](0,-2.5) node[below] {$(z,\n)$}--(0,-2);
}
\end{align}

We recall the standard definitions of the \emph{$q$-Pochhammer symbol} and the \emph{$q$-binomial coefficient}:  

The $q$-Pochhammer symbol is defined as  
\[
(a;q)_{k} := (1 - a)(1 - aq)\cdots(1 - aq^{k-1}), \quad \text{with} \quad (a;q)_{0} = 1. 
\]  

The $q$-binomial coefficient is given by  
\[
\binom{a}{b}_{q} = \dfrac{(q;q)_{a}}{(q;q)_{a-b}(q;q)_{b}}.  
\]

\

\begin{thm}[\cite{BorodinW-spectral2022},~\cite{BosnjakM}]
\label{theorem:weightsoftheYBE}
For any two non-negative numbers $\lambda \leq \mu$, we define the function:
\begin{equation*}
\Phi(\lambda,\mu;x,y):=\dfrac{(x;q)_{\lambda} (y/x ; q)_{\mu-\lambda}}{(y;q)_{\mu}}(y/x)^{\lambda}\binom{\mu}{\lambda}_{q}
\end{equation*}
Then the weights defined as 

\begin{multline}
\label{generalweights}
W_{\l,\m}(z;q;a,b,c,d)=\mathbf{1}_{a+b=c+d} 
\hspace{5mm}z^{d-b} q^{a \l-d \m}\\[0.5 em]
\times \sum_{p}\Phi(c-p,c+d-p;q^{\l-\m}z,q^{-\m}z)\Phi(p,b;q^{-\l}/z,q^{-\l})
\end{multline} where the summation is over non-negative numbers such that $0\leq p \leq \min(b,c)$,
satisfy the Yang--Baxter equation ~\eqref{eq:YBE}.
\end{thm}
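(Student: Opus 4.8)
The plan is to verify the Yang--Baxter equation \eqref{eq:YBE} directly using the explicit formula \eqref{generalweights}. Since the statement is that a specific closed-form solution satisfies a polynomial identity, the proof is ultimately computational, and my strategy would be to reduce the computation to a manageable size rather than expand everything blindly. First I would exploit the conservation law: the indicator $\mathbf{1}_{a+b=c+d}$ present in every weight forces, on both sides of \eqref{eq:YBE}, the conservation $a_1+a_2+a_3 = b_1+b_2+b_3$, and moreover it constrains the internal summation indices $c_1,c_2,c_3$ so that only a single free internal parameter survives on each side (the other two being determined by the external labels). This turns each side of \eqref{eq:YBE} from a triple sum into a single sum, and the $p$-summation inside each $W$ adds one more layer. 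Thus the identity to be checked is a $q$-hypergeometric identity in a bounded number of summation variables.

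The key structural input I would use is that the $\Phi(\lambda,\mu;x,y)$ building block is essentially a terminating $q$-binomial/${}_2\phi_1$-type factor, so that the products of $\Phi$'s appearing in \eqref{generalweights} and then in \eqref{eq:YBE} can be recombined using the $q$-Chu--Vandermonde summation and the $q$-Pfaff--Saalsch\"utz identity. Concretely, the plan is: (1) substitute \eqref{generalweights} into both sides of \eqref{eq:YBE}; (2) separate the factored prefactors $z^{d-b}q^{a\l - d\m}$ and check that the monomial parts in $x,y,z$ and the powers of $q$ match on the two sides after the shift of summation variables (this is bookkeeping with the conservation constraints); (3) for the remaining sum of products of $\Phi$'s, collect it into a single balanced basic hypergeometric series and apply the appropriate ${}_3\phi_2$ summation. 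An alternative, and in practice cleaner, route is to appeal to the known literature: the weights \eqref{generalweights} are exactly (a gauge/normalisation of) the Bo\v{s}njak--Mangazeev $R$-matrix for $\mathcal U_q(\widehat{\mathfrak{sl}_2})$ in its symmetric tensor (higher-spin) representations, for which the YBE is established in \cite{BosnjakM}; one then only has to match conventions with \cite[Appendix~C]{BorodinW-spectral2022}. Since the theorem is attributed to those references, I would present the proof as: state the dictionary between our $W_{\l,\m}(z;q;a,b,c,d)$ and the stochastic weights of \cite{BorodinW-spectral2022}, note that our $\Phi$ is their building block up to the stated rescaling, and invoke their verification of \eqref{eq:YBE}; then, for completeness, sketch the self-contained $q$-series computation above.

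The main obstacle is step (3): recognising the residual sum as an instance of a summable basic hypergeometric series. The product of two $\Phi$'s inside a single $W$, summed over $p$, already requires one summation identity to bring $W$ to a usable form (or one keeps the $p$-sum unevaluated and deals with a double sum at the level of each side of the YBE). When these are multiplied across the three vertices and summed over the surviving internal index, the powers of $q$ carried by the $(y/x)^\lambda$-type factors and the $q$-shifted arguments $q^{\l-\m}z$, $q^{-\m}z$, $q^{-\l}/z$ conspire in a way that is not manifestly symmetric between the two sides of \eqref{eq:YBE}; making the cancellation transparent is where the real work lies, and it is precisely the point at which Pfaff--Saalsch\"utz (a $q$-Saalsch\"utzian ${}_3\phi_2$ summation) is needed. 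A secondary, purely organisational difficulty is handling the edge cases where the argument of a $\Phi$ has its ``$\lambda$'' exceeding its ``$\mu$'', or where spin constraints $a,c\le\m$, $b,d\le\l$ truncate the sums; here I would note that the vanishing conventions stated before the theorem make all such terms consistently zero on both sides, so the identity extends from the generic range to all admissible external labels.
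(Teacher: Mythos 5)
The paper offers no proof of this theorem: it is imported wholesale from the cited works, so your ``alternative, cleaner route'' --- matching conventions between $W_{\l,\m}$ and the Bo\v{s}njak--Mangazeev / Borodin--Wheeler weights and invoking their verification of the YBE --- is exactly what the paper does, and that part of your proposal is correct. Your primary plan, the direct $q$-series verification, remains a plan rather than a proof: the decisive step (recognising the residual single sum as a $q$-Saalsch\"utzian ${}_3\phi_2$ and exhibiting the cancellation between the two sides of \eqref{eq:YBE}) is named but not carried out, so a self-contained argument along those lines would still require substantial work that neither you nor the paper supplies.
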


\begin{ex} Below, we list the vertex weights in the case where \( \l = 1 \) and \( \m \) is an arbitrary positive integer. In this setting, the horizontal edges can take labels only from \( \{0, 1\} \). Then there are four types of vertices which we tabulate below with the corresponding weights written beneath each vertex.

\begin{equation*}
\tikz{0.9}{
\draw[lgray,line width=1pt,->] (-1,0) -- (1,0);
\draw[lgray,line width=4pt,->] (0,-1) -- (0,1);
\node[left] at (-1,0) {\tiny $\BB$};\node[right] at (1,0) {\tiny $\DD$};
\node[below] at (0,-1) {\tiny $\AA$};\node[above] at (0,1) {\tiny $\CC$};
\node[left] at (-1.5,0) {$x \rightarrow$};
\node[below] at (0,-1.4) {$\uparrow$};
\node[below] at (0,-1.9) {$y$};
}=W_{1,\m}\left(\dfrac{x}{y};q;\ba,\bb,\bc,\bd\right)
\end{equation*}

\begin{align}
\label{weights:spin1_row_weights}
\begin{tabular}{|c|c|}
\hline
\quad
\tikz{0.7}{
\draw[lgray,line width=1pt,->] (-1,0) -- (1,0);
\draw[lgray,line width=4pt,->] (0,-1) -- (0,1);
\node[left] at (-1,0) {\tiny $0$};\node[right] at (1,0) {\tiny $0$};
\node[below] at (0,-1) {\tiny $m$};\node[above] at (0,1) {\tiny $m$};
}
\quad
&
\quad
\tikz{0.7}{
\draw[lgray,line width=1pt,->] (-1,0) -- (1,0);
\draw[lgray,line width=4pt,->] (0,-1) -- (0,1);
\node[left] at (-1,0) {\tiny $0$};\node[right] at (1,0) {\tiny $1$};
\node[below] at (0,-1) {\tiny $m$};\node[above] at (0,1) {\tiny $m-1$};
}
\quad
\\[1.3cm]
\quad
$\dfrac{1-x y^{-1} q^{m-\m}}{1-x y^{-1} q^{-\m}}$
\quad
& 
\qquad
$ \dfrac{(q^{m}-1)x y^{-1}q^{-\m}}{1-x y^{-1}q^{-\m}}$
\qquad
\\[0.7cm]
\hline
\quad
\tikz{0.7}{
\draw[lgray,line width=1pt,->] (-1,0) -- (1,0);
\draw[lgray,line width=4pt,->] (0,-1) -- (0,1);
\node[left] at (-1,0) {\tiny $1$};\node[right] at (1,0) {\tiny $0$};
\node[below] at (0,-1) {\tiny $m$};\node[above] at (0,1) {\tiny $m+1$};
}
\quad
&
\quad
\tikz{0.7}{
\draw[lgray,line width=1pt,->] (-1,0) -- (1,0);
\draw[lgray,line width=4pt,->] (0,-1) -- (0,1);
\node[left] at (-1,0) {\tiny $1$};\node[right] at (1,0) {\tiny $1$};
\node[below] at (0,-1) {\tiny $m$};\node[above] at (0,1) {\tiny $m$};
}
\quad
\\[1.3cm] 
\quad
$\dfrac{(1-q^{m-\m})}{1-x y^{-1}q^{-\m}}$
\quad
&
\quad
$\dfrac{(q^{m-\m}-x y^{-1} q^{-\m})}{1-x y^{-1} q^{-\m}}$
\quad
\\[0.7cm]
\hline
\end{tabular} 
\end{align}    
\end{ex}

\begin{prop}(\textbf{Stochasticity})\label{prop:sum_to_unity} For any two fixed non-negative integers, the following equation holds:
\begin{equation}\label{eq:sum_to_unity}
\sum_{c,d}W_{\l,\m}(x;q;a,b,c,d)\,=\,1,    
\end{equation}    
where the sum is over all non-negative numbers $c$ and $d$.
\end{prop}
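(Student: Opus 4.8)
The plan is to reduce the identity, via the conservation law and the ``stitched'' bilinear shape of the weights, to a couple of applications of the $q$-Chu--Vandermonde summation. Since $W_{\l,\m}(x;q;a,b,c,d)$ is supported on $a+b=c+d$, the sum over $(c,d)$ is finite once $a,b$ are fixed: writing $c=a+b-d$, it becomes a sum over $d\in\{0,1,\dots,a+b\}$. We may and do assume $a\le\m$ and $b\le\l$, since otherwise every weight vanishes identically and there is nothing to prove. The first step is to insert the explicit formula~\eqref{generalweights} and interchange the sum over $(c,d)$ with the internal sum over $p$, so that $p$ becomes the outermost index.

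The algebraic input is the ``stochasticity'' of the function $\Phi$ of Theorem~\ref{theorem:weightsoftheYBE}, namely
\[
\sum_{k=0}^{\mu}\Phi(k,\mu;x,y)=1 ,
\]
which, after clearing the common denominator $(y;q)_{\mu}$, is precisely the terminating $q$-Chu--Vandermonde identity $\sum_{k}\binom{\mu}{k}_{q}(x;q)_{k}(y';q)_{\mu-k}(y')^{k}=(xy';q)_{\mu}$ with $y'=y/x$; one also needs the elementary reflection $\Phi(\lambda,\mu;x,y)=y^{\lambda}x^{-\mu}\,\Phi(\mu-\lambda,\mu;y/x,y)$, which follows at once from the definition of $\Phi$ together with $\binom{\mu}{\lambda}_{q}=\binom{\mu}{\mu-\lambda}_{q}$. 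With $p$ fixed, the sum over $(c,d)$ is a sum over $d$ with $c-p$ running over $0,\dots,a+b-p$; the external prefactor $z^{d-b}q^{a\l-d\m}$ combines with the monomial $(y/x)^{c-p}$ hidden inside $\Phi(c-p,c+d-p;q^{\l-\m}z,q^{-\m}z)$ so that the $d$-dependence is exactly that of a $\Phi$-distribution in the first argument, whose total is $1$. A short power count then shows that this inner sum collapses to the monomial $z^{-b}q^{\l(p-b)}$, so that $\sum_{c,d}W_{\l,\m}(z;q;a,b,c,d)=z^{-b}q^{-\l b}\sum_{p}q^{\l p}\,\Phi(p,b;q^{-\l}/z,q^{-\l})$.

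The remaining sum over $p$ is handled the same way: the reflection identity converts $q^{\l p}\Phi(p,b;q^{-\l}/z,q^{-\l})$ into $q^{\l b}z^{b}\,\Phi(b-p,b;z,q^{-\l})$, and summing over $p$ (equivalently over $b-p$) gives $q^{\l b}z^{b}\sum_{p'=0}^{b}\Phi(p',b;z,q^{-\l})=q^{\l b}z^{b}$; putting this back yields $\sum_{c,d}W_{\l,\m}=z^{-b}q^{-\l b}\cdot q^{\l b}z^{b}=1$. The only genuine difficulty here is organisational rather than conceptual: one must keep the prefactors $z^{d-b}$, $q^{a\l-d\m}$ and the monomials $(y/x)^{k}$ concealed in the two copies of $\Phi$ arranged so that, at each stage, the dependence on the variable being summed is exactly the normalisation of a probability-like $\Phi$-weight; the rest is bookkeeping. (Alternatively, one could observe that $\sum_{c,d}W_{\l,\m}(z;q;a,b,c,d)$ is a rational function of $z$ whose apparent poles cancel and which is bounded as $z\to 0,\infty$, hence a constant, and then evaluate that constant on the spin-$1$ weights tabulated in~\eqref{weights:spin1_row_weights}; but the direct reduction above is the more transparent route.)
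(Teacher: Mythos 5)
The paper states Proposition~\ref{prop:sum_to_unity} without proof (it is quoted as a known property of the Bo\v{s}njak--Mangazeev weights), so there is no argument in the text to compare yours against; judged on its own, your verification is correct and complete in its main computational content. The two ingredients you isolate are genuine: $\sum_{k=0}^{\mu}\Phi(k,\mu;x,y)=1$ is indeed the identity $\sum_{k}\binom{\mu}{k}_{q}(y/x)^{k}(x;q)_{k}(y/x;q)_{\mu-k}=(y;q)_{\mu}$, and the reflection $\Phi(\lambda,\mu;x,y)=y^{\lambda}x^{-\mu}\Phi(\mu-\lambda,\mu;y/x,y)$ follows from $\binom{\mu}{\lambda}_{q}=\binom{\mu}{\mu-\lambda}_{q}$. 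In the inner sum the $k$-dependent prefactor is exactly $y^{-k}$ with $y=q^{-\m}z$ (since $z^{-k}q^{\m k}=(q^{-\m}z)^{-k}$), so $y^{-k}\Phi(k,M;x,y)=x^{-M}\Phi(M-k,M;y/x,y)$ sums to $x^{-M}$ with $x=q^{\l-\m}z$, and the power count $z^{M-b}q^{a\l-M\m}x^{-M}=z^{-b}q^{\l(p-b)}$ is right; the second stage likewise gives $q^{\l b}z^{b}$, so the product is $1$. One point you should address explicitly: you are summing the closed formula~\eqref{generalweights} over \emph{all} $(c,d)$ with $c+d=a+b$, whereas the weight is \emph{defined} to vanish when $c>\m$ or $d>\l$. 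For $d>\l$ the formula vanishes automatically because of the factor $(q^{-\l};q)_{d}$ hidden in the first $\Phi$, but for $c>\m$ (with $d\le\l$) the vanishing is not termwise --- it only occurs after the sum over $p$ (e.g.\ $\l=\m=1$, $a=b=1$, $(c,d)=(2,0)$: the two $p$-terms cancel). So either you must prove that the formula vanishes on the inadmissible labels, or your argument establishes the identity for the untruncated formula and you need that extra vanishing to conclude the statement for the weights as defined; as it stands this step is asserted implicitly rather than justified. Also, your dismissal of the case $a>\m$ or $b>\l$ should be phrased as ``the proposition implicitly assumes admissible inputs'' rather than ``there is nothing to prove,'' since in that case the sum is $0$, not $1$.
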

Graphically, this identity can be represented as:
\[\sum_{c,d}
\tikz{0.7}{
\draw[lgray,line width=4pt,->] (-1,0) node[left,black]{$\ss b$} -- (1,0) node[right,black]{$\ss d$};
\draw[lgray,line width=4pt,->] (0,-1) node[below,black]{$\ss a$} -- (0,1) node[above,black]{$\ss c$};
\node[left] at (-1.5,0) {$\ss (x,\l) \rightarrow$};
\node[below] at (0,-1.6) {$\uparrow$};
\node[below] at (0,-2.3) {$\ss (y,\m)$};}
\quad = \quad
\tikz{0.7}{
\draw[lgray,line width=4pt,->] (-1,0) node[left,black]{$\ss b$}-- (1,0) node[right,black]{$\ss b$};
\draw[lgray,line width=4pt,->] (-1,1)node[left,black]{$\ss a$} -- (1,1) node[right,black]{$\ss a$};
\node[left] at (-1.5,0) {$\ss (y,\m)\, \rightarrow$};
\node[left] at (-1.5,1) {$\ss (x,\l)\, \rightarrow$};}
\]

\section{Family of Hall--Littlewood polynomials}\label{sec:HL-family}

In this section, we develop a lattice model formulation for the Hall--Littlewood polynomials \( \mathfrak{J}^{\buv}_{\lambda} \). We start from the generic weights given in~\ref{weights:spin1_uv_generic}, we make certain substitutions to obtain the relevant vertex weights. These weights are then used to construct transfer matrices, establish their commutation relations, and derive the associated \emph{Cauchy identity}. Finally, we discuss how these functions reduce to Hall--Littlewood polynomials $(Q_{\lambda})$, \emph{weak Grothendieck functions} $(J_{\lambda})$, and \emph{weak dual Grothendieck polynomials} $(j_{\lambda})$.

\subsection{Lattice model}
We list the Boltzmann weights used to define the functions \( \mathfrak{J}^{\buv}_{\lambda} \). These weights are obtained as a specialization of the weights~\ref{weights:spin1_row_weights} by substituting \( x \mapsto -x \), \( q^{-\m} = u v \), \( y = v \) into~\ref{weights:spin1_row_weights} and then divide the weights by $u$ whenever the right edge is occupied (i.e., labelled by $1$). The weights resulting from these substitutions are presented below.

\begin{equation}\label{weights:spin1_uv_generic}
\mathrm{W}_{x;(u,v)}(a,b,c,d) \equiv \mathrm{W}_{x;(u,v)}\left(\tikz{0.9}{
\draw[lgray,line width=1pt,->] (-1,0) -- (1,0);
\draw[lgray,line width=4pt,->] (0,-1) -- (0,1);
\node[left] at (-1,0) {\tiny $b$};\node[right] at (1,0) {\tiny $d$};
\node[below] at (0,-1) {\tiny $a$};\node[above] at (0,1) {\tiny $c$};
\node[left] at (-1.5,0) {$x \rightarrow$};
\node[below] at (0,-1.4) {$\uparrow$};
\node[below] at (0,-1.9) {$(u,v)$};
}\right)
\end{equation}

\begin{align}
\label{weights:spin1_uv_weights}
\begin{tabular}{|c|c|}
\hline
\quad
\tikz{0.7}{
\draw[lgray,line width=1pt,->] (-1,0) -- (1,0);
\draw[lgray,line width=4pt,->] (0,-1) -- (0,1);
\node[left] at (-1,0) {\tiny $0$};\node[right] at (1,0) {\tiny $0$};
\node[below] at (0,-1) {\tiny $m$};\node[above] at (0,1) {\tiny $m$};
}
\quad
&
\quad
\tikz{0.7}{
\draw[lgray,line width=1pt,->] (-1,0) -- (1,0);
\draw[lgray,line width=4pt,->] (0,-1) -- (0,1);
\node[left] at (-1,0) {\tiny $0$};\node[right] at (1,0) {\tiny $1$};
\node[below] at (0,-1) {\tiny $m$};\node[above] at (0,1) {\tiny $m-1$};
}
\quad
\\[1.3cm]
\quad
$\dfrac{1+ux  q^{m}}{1+ux}$
\quad
& 
\qquad
$ \dfrac{(1-q^{m})x}{1+ux}$
\qquad
\\[0.7cm]
\hline
\quad
\tikz{0.7}{
\draw[lgray,line width=1pt,->] (-1,0) -- (1,0);
\draw[lgray,line width=4pt,->] (0,-1) -- (0,1);
\node[left] at (-1,0) {\tiny $1$};\node[right] at (1,0) {\tiny $0$};
\node[below] at (0,-1) {\tiny $m$};\node[above] at (0,1) {\tiny $m+1$};
}
\quad
&
\quad
\tikz{0.7}{
\draw[lgray,line width=1pt,->] (-1,0) -- (1,0);
\draw[lgray,line width=4pt,->] (0,-1) -- (0,1);
\node[left] at (-1,0) {\tiny $1$};\node[right] at (1,0) {\tiny $1$};
\node[below] at (0,-1) {\tiny $m$};\node[above] at (0,1) {\tiny $m$};
}
\quad
\\[1.3cm] 
\quad
$\dfrac{(1-uvq^{m})}{1+ux}$
\quad
&
\quad
$\dfrac{(x+vq^{m})}{1+ux}$
\quad
\\[0.7cm]
\hline
\end{tabular} 
\end{align}   
\begin{rmk}
For $u = v = -s$, these weights reduce to those used in the definition of 
the \emph{spin Hall--Littlewood functions} given in~\cite[Definition~4.1]{spin-Bor2017}.
\end{rmk}

It is convenient to list the weights that satisfy the YBE together with the  weights~\ref{weights:spin1_uv_weights}.
\begin{equation}\label{weights:Rmatrix_spin1_generic}
\mathrm{R}_{x/y}(a,b,c,d) \equiv \mathrm{R}_{x/y}\left(
{\tikz{0.7}{
\draw[lblack,line width=1pt,->] (-1,0) -- (1,0);
\draw[lblack,line width=1pt,->] (0,-1) -- (0,1);
\node[left] at (-1,0) {\tiny $b$};\node[right] at (1,0) {\tiny $d$};
\node[below] at (0,-1) {\tiny $a$};\node[above] at (0,1) {\tiny $c$};
\node[left] at (-1.5,0) {$x \rightarrow$};
\node[below] at (0,-1.6) {$\uparrow$};
\node[below] at (0,-2.3) {$y$};
}}
\right)
\end{equation}

\begin{align}\label{weights:Rmatrix_spin1}
\begin{tabular}{|c|c|c|}
\hline
\quad
\tikz{0.6}{
	\draw[lgray,line width=1.5pt,->] (-1,0) -- (1,0);
	\draw[lgray,line width=1.5pt,->] (0,-1) -- (0,1);
	\node[left] at (-1,0) {\tiny $0$};\node[right] at (1,0) {\tiny $0$};
	\node[below] at (0,-1) {\tiny $0$};\node[above] at (0,1) {\tiny $0$};
}
\quad
&
\quad
\tikz{0.6}{
	\draw[lgray,line width=1.5pt,->] (-1,0) -- (1,0);
	\draw[lgray,line width=1.5pt,->] (0,-1) -- (0,1);
	\node[left] at (-1,0) {\tiny $0$};\node[right] at (1,0) {\tiny $0$};
	\node[below] at (0,-1) {\tiny $1$};\node[above] at (0,1) {\tiny $1$};
}
\quad
&
\quad
\tikz{0.6}{
	\draw[lgray,line width=1.5pt,->] (-1,0) -- (1,0);
	\draw[lgray,line width=1.5pt,->] (0,-1) -- (0,1);
	\node[left] at (-1,0) {\tiny $0$};\node[right] at (1,0) {\tiny $1$};
	\node[below] at (0,-1) {\tiny $1$};\node[above] at (0,1) {\tiny $0$};
}
\quad
\\[1.3cm]
\quad
$1$
\qquad
& 
\quad
$\dfrac{q(1-y/x)}{1-qy/x}$
\quad
& 
\quad
$\dfrac{1-q}{1-qy/x}$
\quad
\\[0.7cm]
\hline
\quad
\tikz{0.6}{
	\draw[lgray,line width=1.5pt,->] (-1,0) -- (1,0);
	\draw[lgray,line width=1.5pt,->] (0,-1) -- (0,1);
	\node[left] at (-1,0) {\tiny $1$};\node[right] at (1,0) {\tiny $1$};
	\node[below] at (0,-1) {\tiny $1$};\node[above] at (0,1) {\tiny $1$};
}
\quad
&
\quad
\tikz{0.6}{
	\draw[lgray,line width=1.5pt,->] (-1,0) -- (1,0);
	\draw[lgray,line width=1.5pt,->] (0,-1) -- (0,1);
	\node[left] at (-1,0) {\tiny $1$};\node[right] at (1,0) {\tiny $1$};
	\node[below] at (0,-1) {\tiny $0$};\node[above] at (0,1) {\tiny $0$};
}
\quad
&
\quad
\tikz{0.6}{
	\draw[lgray,line width=1.5pt,->] (-1,0) -- (1,0);
	\draw[lgray,line width=1.5pt,->] (0,-1) -- (0,1);
	\node[left] at (-1,0) {\tiny $1$};\node[right] at (1,0) {\tiny $0$};
	\node[below] at (0,-1) {\tiny $0$};\node[above] at (0,1) {\tiny $1$};
}
\quad
\\[1.3cm]
\quad
$1$
\quad
& 
\quad
$\dfrac{1-y/x}{1-qy/x}$
\quad
&
\quad
$\dfrac{(1-q)y/x}{1-qy/x}$
\quad 
\\[0.7cm]
\hline
\end{tabular}
\end{align}

\begin{prop}\label{prop:RLL_relation_spin1_uv}
The weights given in~\eqref{weights:spin1_uv_weights} together with~\eqref{weights:Rmatrix_spin1} satisfy the \emph{YBE} relation. That is, for any fixed boundary, we have the following equality of partition functions:
\begin{align}\label{eq:YBE_pictorial_spin1}
\sum_{c_1,c_2,c_{3}}
\tikz{0.9}{
\draw[lgray,line width=1pt,->]
(-2,1) node[above,scale=0.6] {\color{black} $a_2$} -- (-1,0) node[below,scale=0.6] {\color{black} $c_2$} -- (1,0) node[right,scale=0.6] {\color{black} $b_2$};
\draw[lgray,line width=1pt,->] 
(-2,0) node[below,scale=0.6] {\color{black} $a_1$} -- (-1,1) node[above,scale=0.6] {\color{black} $c_1$} -- (1,1) node[right,scale=0.6] {\color{black} $b_1$};
\draw[lgray,line width=4pt,->] 
(0,-1) node[below,scale=0.6] {\color{black} $a_{3}$} -- (0,0.5) node[scale=0.6] {\color{black} $c_{3}$} -- (0,2) node[above,scale=0.6] {\color{black} $b_{3}$};
\node[left] at (-2.2,1) {$\ss (x,\l) \rightarrow $};
\node[left] at (-2.2,0) {$\ss (y,\m) \rightarrow$};
\draw[->](0,-2) node[below] {$\ss (z,\n)$}--(0,-1.7);
}
\quad
=
\quad
\sum_{c_1,c_2,c_{3}}
\tikz{0.9}{
\draw[lgray,line width=1pt,->] 
(-1,1) node[left,scale=0.6] {\color{black} $a_2$} -- (1,1) node[above,scale=0.6] {\color{black} $c_2$} -- (2,0) node[below,scale=0.6] {\color{black} $b_2$};
\draw[lgray,line width=1pt,->] 
(-1,0) node[left,scale=0.6] {\color{black} $a_1$} -- (1,0) node[below,scale=0.6] {\color{black} $c_1$} -- (2,1) node[above,scale=0.6] {\color{black} $b_1$};
\draw[lgray,line width=4pt,->] 
(0,-1) node[below,scale=0.6] {\color{black} $a_{3}$} -- (0,0.5) node[scale=0.6] {\color{black} $c_{3}$} -- (0,2) node[above,scale=0.6] {\color{black} $b_{3}$};
\node[left] at (-1.5,1) {$\ss (x,\l) \rightarrow$};
\node[left] at (-1.5,0) {$\ss  (y ,\m)\rightarrow$};
\draw[->](0,-2) node[below] {$\ss (z,\n)$}--(0,-1.7);
}
\end{align}
\end{prop}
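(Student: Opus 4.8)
The plan is to deduce Proposition~\ref{prop:RLL_relation_spin1_uv} directly from the master Yang--Baxter equation of Theorem~\ref{theorem:weightsoftheYBE} by making the specialisations that define the weights~\eqref{weights:spin1_uv_weights} and~\eqref{weights:Rmatrix_spin1}. Concretely, in~\eqref{eq:YBE} I take $\l=\m=1$ and $\n$ arbitrary, so that the two horizontal lines carry spin $1$ and the single vertical line carries arbitrary spin. The $W_{1,1}$ factors become the $R$-matrix weights of~\eqref{weights:Rmatrix_spin1}, while the two $W_{1,\n}$ factors become the vertex weights~\eqref{weights:spin1_uv_weights} after the change of variables recorded before~\eqref{weights:spin1_uv_generic}: replace the vertical spectral parameter via $q^{-\n}=uv$, set the vertical rapidity to $v$, and send the horizontal rapidities $x\mapsto -x$, $y\mapsto -y$. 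Since the $R$-matrix sits on the two horizontal (spin-$1$) lines, it depends only on the ratio $x/y$ of the horizontal rapidities, and under $x\mapsto -x,\ y\mapsto -y$ this ratio is unchanged; hence the $R$-weights are exactly those in~\eqref{weights:Rmatrix_spin1} with no sign ambiguity.

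The one genuine subtlety is the extra normalisation in the definition of~\eqref{weights:spin1_uv_weights}: one divides the specialised weight by $u$ whenever the right (horizontal) edge is occupied. I must check that this rescaling is compatible with the YBE, i.e. that it can be absorbed consistently on both sides of~\eqref{eq:YBE_pictorial_spin1}. This works because ``number of particles on the right horizontal edge'' is a boundary datum: in the left-hand (respectively right-hand) configuration of~\eqref{eq:YBE_pictorial_spin1} the labels $b_1,b_2$ on the outgoing horizontal edges are fixed, and each of the two spin-$1$ horizontal lines passes through exactly one $W_{1,\n}$ vertex, so the total power of $u^{-1}$ produced by the rescaling equals $u^{-(\mathbf 1_{b_1=1}+\mathbf 1_{b_2=1})}$ on both sides, independently of the summed internal labels $c_1,c_2,c_3$. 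Thus the normalisation contributes the same overall scalar to each side and drops out, and the rescaled identity holds iff the unrescaled one does.

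With these reductions in place, the proposition is immediate: the unrescaled identity is precisely the $\l=\m=1$ instance of~\eqref{eq:YBE} (equivalently~\eqref{eq:YBE_pictorial}), which holds by Theorem~\ref{theorem:weightsoftheYBE}; the change of variables $x\mapsto -x$, $y\mapsto -y$, $q^{-\n}=uv$, vertical rapidity $=v$ is a substitution of formal parameters and preserves the identity; and the $u^{-1}$-rescaling is an allowable diagonal conjugation by the argument above. One should also note that the sums over $c_1,c_2,c_3$ are finite: conservation at each vertex together with $a,c\le\m=1$ on the relevant edges bounds all internal labels in terms of the fixed boundary, so there are no convergence issues. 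I expect the only thing needing care in the write-up is a clean statement of why the $u$-rescaling respects the YBE; everything else is bookkeeping of the specialisation.
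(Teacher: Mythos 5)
Your proposal is correct and follows essentially the same route as the paper: specialise the master YBE of Theorem~\ref{theorem:weightsoftheYBE} at $\l=\m=1$ with $x\mapsto -x$, $y\mapsto -y$, $z=v$, $q^{-\n}=uv$, and absorb the $u$-rescaling, which the paper implements by multiplying both sides by $u^{-b_1-b_2}$. The only point worth tightening is your claim that the rescaling factor is boundary-determined on the right-hand side: there the right edges of the two $W_{1,\n}$ vertices carry the internal labels $c_1,c_2$, so one should explicitly invoke conservation at the cross ($c_1+c_2=b_1+b_2$) to see that $u^{-(c_1+c_2)}=u^{-(b_1+b_2)}$ is constant over the sum.
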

\begin{proof}
We substitute \( x \mapsto -x \), \( y \mapsto -y \), \( z = v \), and \( q^{-\n} = u v \), along with \( \l = 1 \) and \( \m = 1 \), into Equation~\ref{eq:YBE}. After these substitutions, we multiply both sides of Equation~\ref{eq:YBE} by \( u^{-b_{1}-b_{2}} \). 
\end{proof}

We now define \emph{transfer matrices} as single row lattices. Let $V$\,=\ $\text{Span}_{\mathbb{C}} \{\ket{i}\}_{i\in\mathbb{Z}_{\geq0}}$ be an infinite dimensional vector space. Let $\mathbb{V}(N)=V_{1}\otimes V_{2} \otimes\cdots\otimes V_{N}$ where each of $V_{i}$ is a copy of $V$. We define the \emph{row to row transfer matrices} as element in $\text{End}(\mathbb{V}(N))$.

\begin{defn}
We introduce a $\mathrm{T}(x;N)\in \text{End}(\mathbb{V}(N))$ as:
\begin{multline}\label{def:transfermatrix_spin1}
\mathrm{T}(x;N):\, \ket{i_1}\otimes \ket{i_2}\otimes {\cdots}\ket{i_N}
\mapsto\\
\sum_{k_1,k_2,\dots,k_{N} \geq 0} 
\left(
\tikz{1}{\draw[lgray,line width=1.5pt,->] 
(-2,1) node[black,left] {$\ss 0$}-- (4,1) node[black,right] {$*$};
\draw[lgray,line width=4pt,->] 
(3,0) node[below,scale=0.6] {\color{black} $i_{1}$} -- (3,2) node[above,scale=0.6] {\color{black} $k_{1}$};
\draw[lgray,line width=4pt,->] 
(2,0) node[below,scale=0.6] {\color{black} $i_{2}$} -- (2,2) node[above,scale=0.6] {\color{black} $k_{2}$};
\draw[lgray,line width=4pt,->] 
(1,0) -- (1,2);
\draw[lgray,line width=4pt,->] 
(0,0)  -- (0,2);
\draw[lgray,line width=4pt,->] 
(-1,0) node[below,scale=0.6] {\color{black} $i_{N}$} -- (-1,2) node[above,scale=0.6] {\color{black} $k_{N}$};
\node[left] at (-3,1) {$x \rightarrow$};
\node at (0.5,-0.2) {$\cdots$};
\node at (0.5,2.2) {$\cdots$};
\draw[->](3,-1.3) node[below] {$\ss (u_{1},v_{1})$}--(3,-1);
\draw[->](2,-1.3) node[below] {$\ss (u_{2},v_{2})$}--(2,-1);
\draw (0.5,-1.3) node[below] {$\dots $};
}\right)\,\ket{k_1}\otimes \ket{k_2}\otimes {\cdots}\otimes \ket{k}_{N}.
\end{multline}
\end{defn}

Let us briefly describe the boundary conditions of the single-row lattice above. The top, bottom, and left boundaries are fixed. The right boundary is labelled by \( * \), indicating that it is \emph{free}, a particle may exit or not. Given the boundary conditions and the conservation condition at each vertex, we can conclude that there is a unique configuration. To see this, note that at the first vertex from the left the top, bottom and left edges of this vertex are fixed; by conservation, label on the right edge is forced. We can then apply the same reasoning for every subsequent vertex. Then the weight of this single-row lattice is simply the product of the weights of the vertices.

When proving \emph{Cauchy identities} it is necessary to drop the dependence on $N$. We define the lift of $\mathbb{V}(N)$ to an infinite product:
\[
\mathbb{V}(\infty)=\text{Span}_{\mathbb{C}}\left\{ \bigotimes^{\infty}_{k=1}\ket{i_{k}}\right\}
\]
where $i_{k}\in\mathbb{Z}_{\geq 0}$ for all $k\geq 0$ have the stability property: there exists an positive integer $M$ such that $i_{k}=0$ for all $k\geq M$. We write $\mathbb{V}^{*}(\infty)$ to denote dual of $\mathbb{V}(\infty)$. Then the corresponding lift of the operator~\ref{def:transfermatrix_spin1} is denoted by:

\[
\mathrm{T}(x)\,=\,\mathrm{T}(x;\infty)
\]

\begin{defn}\label{def:spin1_uv_HL}
Let \( \lambda = (\lambda_{1} \geq \lambda_{2} \geq \cdots) \) and \( \mu = (\mu_{1} \geq \mu_{2} \geq \cdots) \) be two partitions to which we associate the vectors:
\[
\bra{\lambda} := \bigotimes_{i=1}^{\infty} \bra{m_{i}(\lambda)} \in \mathbb{V}^{*}(\infty),
\qquad\text{ and }\qquad
\ket{\mu} := \bigotimes_{i=1}^{\infty} \ket{m_{i}(\mu)} \in \mathbb{V}(\infty),
\]
where \( m_{i}(\lambda) \) denotes the number of rows of size \( i \) in the Young diagram of \( \lambda \). For partitions \( \mu \subseteq \lambda \), we define the functions \( \mathfrak{J}^{\buv}_{\lambda/\mu} \) as:
\begin{equation}
  \JJ^{\buv}_{\lambda/\mu}(x_{1}, \dots, x_{n}) := \bra{\lambda}\, \mathrm{T}(x_{n}) \cdots \mathrm{T}(x_{1})\, \ket{\mu}.
\end{equation}

In other words, $\JJ^{\buv}_{\lambda/\mu}$ is the partition function of the following lattice:

\[
\tikz{0.8}{\foreach\y in {-1,0.5,2,3.5,5}{
\draw[lgray,line width=1pt,->] (0,\y) node[left,black]{$\ss 0$}-- (8,\y) node[right,black]{$\ss *$};
}
\foreach\x in {1,2.5,4,5.5,7}{
\draw[lgray,line width=4pt,->] (\x,-2) -- (\x,6);
\node at (2.5,-2.5) {$ \dots$};
\node at (3.5,-2.5) {$ \dots$};
\node at (5.5,-2.5) {$\ss m_{2}(\lambda)$};
\node at (7,-2.5) {$\ss m_{1}(\lambda)$};
\node[black] at (2.5,6.5) {$ \dots$};
\node at (3.5,6.5) {$ \dots$};
\node[black] at (5.5,6.5) {$\ss m_{2}(\mu)$};
\node[black] at (7,6.5) {$\ss m_{1}(\mu)$};
\draw[->](7,-3.5) node[below] {$\ss (u_{1},v_{1})$}--(7,-3.2);
\draw[->](5.5,-3.5) node[below] {$\ss (u_{2},v_{2})$}--(5.5,-3.2);
\draw[->](-1.3,5) node[left,black] {$\ss x_{1}$}--(-1,5);
\draw(-1.5,2) node[left,black] {$\vdots$};
\draw[->](-1.3,-1) node[left,black] {$\ss x_{n}$}--(-1,-1);
}
}
\]
using the weights~\ref{weights:spin1_uv_weights}.
\end{defn}

\

\begin{thm}\label{thm:JJ_symmetric}
The polynomials $\JJ^{\buv}_{\lambda}(x_{1},\dots,x_{n})$ are invariant under permutation of the variables $x$.
\end{thm}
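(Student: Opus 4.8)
The plan is to prove symmetry one transposition at a time: it suffices to show that $\mathrm{T}(x_{i+1})\mathrm{T}(x_i)$, acting on $\mathbb{V}(\infty)$, is invariant under swapping $x_i \leftrightarrow x_{i+1}$, since adjacent transpositions generate the full symmetric group and $\bra{\lambda}$, $\ket{\mu}$ do not depend on the $x$'s. Equivalently, I want $\mathrm{T}(x)\mathrm{T}(y) = \mathrm{T}(y)\mathrm{T}(x)$ as operators (this is the standard ``commuting transfer matrices'' statement), from which symmetry of $\JJ^{\buv}_{\lambda/\mu}(x_1,\dots,x_n) = \bra{\lambda}\mathrm{T}(x_n)\cdots\mathrm{T}(x_1)\ket{\mu}$ in any two adjacent arguments is immediate.

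First I would set up the $R$-matrix intertwining argument. Using Proposition~\ref{prop:RLL_relation_spin1_uv} (the $RLL$ relation, which holds vertex-by-vertex for the weights~\eqref{weights:spin1_uv_weights} together with the $R$-matrix~\eqref{weights:Rmatrix_spin1}), I can ``railroad'' an $R$-vertex with spectral parameter $x/y$ along a two-row strip: attaching $\mathrm{R}_{x/y}$ at the left boundary of the two stacked rows carrying $(x,1)$ and $(y,1)$, I repeatedly apply the local YBE move~\eqref{eq:YBE_pictorial_spin1} to push the $R$-vertex from the left boundary all the way to the right boundary. The left boundary edges of both horizontal lines are fixed to $0$; the weight $\mathrm{R}_{x/y}(0,0,0,0)=1$, so inserting it on the left costs nothing. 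On the right, after railroading, the $R$-vertex sits at the free boundary $*$; summing over the free right edges and using that $\mathrm{R}_{x/y}$ is stochastic/normalized appropriately (its row sums, together with the freeness of the $*$ boundary, collapse the $R$-vertex to identity) shows that the two-row partition function with rows ordered $(x,y)$ equals the one with rows ordered $(y,x)$. This is exactly $\mathrm{T}(y)\mathrm{T}(x) = \mathrm{T}(x)\mathrm{T}(y)$ once one checks the right-boundary bookkeeping.

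The one subtlety I would be careful about — and the step I expect to be the main obstacle — is handling the \emph{infinite} horizontal extent together with the \emph{free} right boundary $*$ on each row. For a genuinely infinite row, ``pushing the $R$-vertex to the right'' requires a stability/finiteness argument: because the vertical edges $i_k$ (and hence all intermediate configurations) vanish for $k$ large, the $R$-vertex, once it travels past the last occupied column, acts on the vacuum $\ket{0}\otimes\ket{0}$ pair of horizontal edges and by $\mathrm{R}_{x/y}(0,0,0,0)=1$ simply disappears. So in practice the railroading only needs to be carried out over finitely many columns, and one must argue that the contributions stabilize as $N\to\infty$ so that the identity descends from $\mathbb{V}(N)$ to $\mathbb{V}(\infty)=\mathbb{V}(\infty)$. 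A clean way to present this is: fix $\lambda,\mu$; then only finitely many columns are nontrivial, truncate to $\mathbb{V}(N)$ for $N$ large, prove $\mathrm{T}(x;N)\mathrm{T}(y;N)=\mathrm{T}(y;N)\mathrm{T}(x;N)$ on the relevant finite-dimensional subspace by the $R$-matrix railroading above, and conclude.

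In summary, the key steps in order are: (1) reduce full symmetry to commutativity $\mathrm{T}(x)\mathrm{T}(y)=\mathrm{T}(y)\mathrm{T}(x)$ of transfer matrices; (2) truncate to $\mathbb{V}(N)$ using the stability of the boundary data; (3) insert the trivial $R$-vertex $\mathrm{R}_{x/y}(0,0,0,0)=1$ at the left edge of the two-row strip; (4) railroad it rightward by repeated application of Proposition~\ref{prop:RLL_relation_spin1_uv}; (5) absorb it at the free right boundary $*$ using the row-sum property of $\mathrm{R}_{x/y}$, obtaining the swapped two-row partition function; (6) let $N\to\infty$ and read off symmetry of $\JJ^{\buv}_{\lambda/\mu}$. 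The routine vertex-weight verifications are subsumed in Proposition~\ref{prop:RLL_relation_spin1_uv}, so the only real content is the boundary/infinite-volume argument in step (5) and the stabilization in step (6).
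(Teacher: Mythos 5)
Your proposal is correct and follows essentially the same route as the paper: reduce symmetry to commutativity of the transfer matrices, insert the trivial $R$-vertex $\mathrm{R}_{x/y}(0,0,0,0)=1$ at the left boundary where both edges carry $0$, railroad it rightward via Proposition~\ref{prop:RLL_relation_spin1_uv}, and remove it at the free right boundary using the sum-to-unity property. Your additional remarks on truncating to $\mathbb{V}(N)$ and stabilising as $N\to\infty$ make explicit a point the paper leaves implicit, but the argument is the same.
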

\begin{proof}
Observe that to prove the symmetry of \( \mathfrak{J}^{\buv}_{\lambda}(x_1, \dots, x_n) \), it suffices to show that the transfer matrices \( \mathrm{T}(x) \) and \( \mathrm{T}(y) \) commute. The product \( \mathrm{T}(x)\mathrm{T}(y) \) corresponds to stacking two transfer matrices vertically, one on top of the other; that is, the product is equivalent to considering a two-row lattice configuration. We can then freely introduce ``cross'' at the left end of the lattice as shown below:
\begin{multline}
\tikz{0.7}{
\draw[lgray,line width=1.5pt,->] 
(-2,1) node[black,left] {$\ss 0$}-- (4,1) node[black,right] {$\ss *$};
\draw[lgray,line width=1.5pt,->] 
(-2,2) node[black,left] {$\ss 0$}-- (4,2) node[black,right] {$\ss *$};
\draw[lgray,line width=4pt,->] 
(3,0) -- (3,3) ;
\draw[lgray,line width=4pt,->] 
(1,0) -- (1,3);
\draw[lgray,line width=4pt,->] 
(0,0) -- (0,3);
\draw[lgray,line width=4pt,->] 
(-1,0) -- (-1,3);
\node[left] at (-4,1) {$y \rightarrow$};
\node[left] at (-4,2) {$x \rightarrow$};
\node at (2,0.5) {$\cdots$};
\node at (2,1.5) {$\cdots$};
\node at (2,2.5) {$\cdots$};
}   
\,=\,\tikz{0.7}{
\draw[lgray,line width=1.5pt,->] 
(-3,2) node[black,left] {$\ss 0$}-- (-2,1)-- (4,1) node[black,right] {$\ss *$};
\draw[lgray,line width=1.5pt,->] 
(-3,1) node[black,left] {$\ss 0$}-- (-2,2)-- (4,2) node[black,right] {$\ss *$};
\draw[lgray,line width=4pt,->] 
(3,0) -- (3,3) ;
\draw[lgray,line width=4pt,->] 
(1,0) -- (1,3);
\draw[lgray,line width=4pt,->] 
(0,0) -- (0,3);
\draw[lgray,line width=4pt,->] 
(-1,0) -- (-1,3);
\node[left] at (-4,1) {$y \rightarrow$};
\node[left] at (-4,2) {$x \rightarrow$};
\node at (2,0.5) {$\cdots$};
\node at (2,1.5) {$\cdots$};
\node at (2,2.5) {$\cdots$};
}   
\end{multline}

We then repeatedly apply Proposition~\ref{prop:RLL_relation_spin1_uv}, 
and subsequently use the sum-to-unity property Proposition~\ref{prop:sum_to_unity} 
to obtain the desired result, as illustrated below.

\begin{multline}
\tikz{0.7}{
\draw[lgray,line width=1.5pt,->] 
(-2,1) node[black,left] {$\ss 0$}-- (4,1)--(5,2) node[black,right] {$\ss *$};
\draw[lgray,line width=1.5pt,->] 
(-2,2) node[black,left] {$\ss 0$}-- (4,2)  --(5,1)node[black,right] {$\ss *$};
\draw[lgray,line width=4pt,->] 
(3,0) -- (3,3) ;
\draw[lgray,line width=4pt,->] 
(1,0) -- (1,3);
\draw[lgray,line width=4pt,->] 
(0,0) -- (0,3);
\draw[lgray,line width=4pt,->] 
(-1,0) -- (-1,3);
\node[left] at (-3,1) {$y \rightarrow$};
\node[left] at (-3,2) {$x \rightarrow$};
\node at (2,0.5) {$\cdots$};
\node at (2,1.5) {$\cdots$};
\node at (2,2.5) {$\cdots$};
} \,=\,\tikz{0.7}{
\draw[lgray,line width=1.5pt,->] 
(-2,1) node[black,left] {$\ss 0$}-- (4,1)--(5,1) node[black,right] {$\ss *$};
\draw[lgray,line width=1.5pt,->] 
(-2,2) node[black,left] {$\ss 0$}-- (4,2)  --(5,2)node[black,right] {$\ss *$};
\draw[lgray,line width=4pt,->] 
(3,0) -- (3,3) ;
\draw[lgray,line width=4pt,->] 
(1,0) -- (1,3);
\draw[lgray,line width=4pt,->] 
(0,0) -- (0,3);
\draw[lgray,line width=4pt,->] 
(-1,0) -- (-1,3);
\node[left] at (-3,1) {$y \rightarrow$};
\node[left] at (-3,2) {$x \rightarrow$};
\node at (2,0.5) {$\cdots$};
\node at (2,1.5) {$\cdots$};
\node at (2,2.5) {$\cdots$};
} 
\end{multline} 
\end{proof}

\
\begin{prop}
The polynomials $\mathfrak{J}_{\lambda/\mu}^{\buv}$ satisfy the following stability property:
\begin{equation}
 \mathfrak{J}_{\lambda/\mu}^{\buv}(x_{1},\dots,x_{n},x_{n+1})|_{x_{n+1}=0}\,=\,\mathfrak{J}_{\lambda/\mu}^{\buv}(x_{1},\dots,x_{n}).
\end{equation}
\end{prop}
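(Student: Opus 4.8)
The statement is a stability property: adding an extra variable $x_{n+1}$ and setting it to zero recovers the function in $n$ variables. Since $\mathfrak{J}^{\buv}_{\lambda/\mu}(x_{1},\dots,x_{n+1}) = \bra{\lambda}\,\mathrm{T}(x_{n+1})\mathrm{T}(x_{n})\cdots\mathrm{T}(x_{1})\,\ket{\mu}$, it suffices to show that the transfer matrix degenerates to the identity at spectral parameter zero, i.e. $\mathrm{T}(0) = \mathrm{Id}$ on $\mathbb{V}(\infty)$. Concretely, I would set $x=0$ in the single-row lattice defining $\mathrm{T}(x)$ and track what happens vertex by vertex, reading off the weights from the table~\eqref{weights:spin1_uv_weights}.

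\textbf{Key steps.} First I would specialise the four Boltzmann weights in~\eqref{weights:spin1_uv_weights} at $x=0$: the weight of the vertex $(b,d;a,c)=(0,0;m,m)$ becomes $\tfrac{1+0}{1+0}=1$; the weight of $(0,1;m,m-1)$ becomes $\tfrac{(1-q^{m})\cdot 0}{1+0}=0$; the weight of $(1,0;m,m+1)$ becomes $\tfrac{1-uvq^{m}}{1}=1-uvq^{m}$; and the weight of $(1,1;m,m)$ becomes $\tfrac{0+vq^{m}}{1}=vq^{m}$. Next I would analyse the single-row lattice: the left boundary edge carries $0$, so the leftmost vertex has $b=0$, which by conservation forces $d=0$ and (from the two $b=0$ weights) the only nonzero configuration has $d=0$, $c=a$, with weight $1$. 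Inductively, since $d=0$ is fed as the left edge of the next vertex, the same argument propagates: every horizontal edge in the row is $0$, every vertical edge is unchanged, and the total weight is the product of $1$'s. Hence $\mathrm{T}(0)\ket{\mu}=\ket{\mu}$ for every $\mu$, i.e. $\mathrm{T}(0)=\mathrm{Id}$. Finally, substituting this into the definition gives
\[
\mathfrak{J}^{\buv}_{\lambda/\mu}(x_{1},\dots,x_{n},0) = \bra{\lambda}\,\mathrm{T}(0)\,\mathrm{T}(x_{n})\cdots\mathrm{T}(x_{1})\,\ket{\mu} = \bra{\lambda}\,\mathrm{T}(x_{n})\cdots\mathrm{T}(x_{1})\,\ket{\mu} = \mathfrak{J}^{\buv}_{\lambda/\mu}(x_{1},\dots,x_{n}),
\]
as desired.

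\textbf{Main obstacle.} There is no serious obstacle here; the only point requiring a little care is the bookkeeping with the semi-infinite row and the free right boundary $*$. One must note that since $\mu$ is a partition, only finitely many $m_{i}(\mu)$ are nonzero, so only finitely many vertical edges carry particles, and the "all horizontal edges zero" configuration is the unique one contributing (the free right boundary is then automatically $0$). This is the same finiteness/stability bookkeeping already used implicitly when $\mathrm{T}(x)$ was lifted to $\mathbb{V}(\infty)$, so it poses no genuine difficulty. I would also remark that this stability is what makes the notation $\mathfrak{J}^{\buv}_{\lambda/\mu}$ consistent across different numbers of variables, in preparation for passing to symmetric functions.
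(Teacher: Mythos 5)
Your proposal is correct and follows essentially the same route as the paper: both reduce the claim to showing that $\mathrm{T}(0)$ acts as the identity (the paper phrases this as $\bra{\lambda}\mathrm{T}(0)\ket{\nu}=\delta_{\lambda,\nu}$ after inserting a resolution of the identity), and both deduce it from the vanishing at $x=0$ of the weight of the vertex with left label $0$ and right label $1$, which propagates zeros along the row. The only quibble is that it is this vanishing weight, not conservation, that forces $d=0$ at the leftmost vertex — a point your parenthetical already supplies.
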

\begin{proof}
From Definition~\ref{def:spin1_uv_HL} of 
\(
\mathfrak{J}^{\buv}_{\lambda/\mu}(x_{1},\dots,x_{n},x_{n+1}),
\)
we obtain
\begin{equation}
\JJ^{\buv}_{\lambda/\mu}(x_{1},\dots,x_{n},x_{n+1})
   \,=\, \bra{\lambda}\, \mathrm{T}(x_{n+1}) \cdots \mathrm{T}(x_{1})\, \ket{\mu} \,
   =\, \sum_{\nu \prec \lambda} 
      \bra{\lambda}\, \mathrm{T}(x_{n+1}) \ket{\nu}\,
      \bra{\nu}\, \mathrm{T}(x_{n}) \cdots \mathrm{T}(x_{1})\, \ket{\mu}.
\end{equation}
The proposition follows once we show that
\[
\bra{\lambda}\mathrm{T}(0)\ket{\nu} \;=\; \delta_{\lambda,\nu}.
\]

This identity is immediate from the graphical interpretation of the transfer matrix.  
At \(x=0\), the weight of the vertex
\[
\tikz{0.5}{
\draw[lgray,line width=1pt,->] (-1,0) -- (1,0);
\draw[lgray,line width=4pt,->] (0,-1) -- (0,1);
\node[left] at (-1,0) {\tiny $0$};
\node[right] at (1,0) {\tiny $1$};
\node[below] at (0,-1) {\tiny $m$};
\node[above] at (0,1) {\tiny $m-1$};
}
\]
vanishes. Given that particles only enter from the bottom, the only admissible configuration is the one in which all particles entering from the bottom travel straight upward without turning to the right. This completes the proof.
\end{proof}

\

\subsection{Dual weights}

In order to prove Cauchy identities, we need to consider an alternative version of the vertex weights~\ref{weights:spin1_uv_weights}. This convention is obtained by dividing all the vertices by $(x+v)/(1+ux)$ and then inverting the spectral parameter $x$ in the weights~\ref{weights:spin1_uv_weights}. The Yang--Baxter equation~\ref{eq:YBE_pictorial_spin1} is preserved under this transformation (up to inversion of $x\mapsto -x^{-1}$ and $y\mapsto -y^{-1}$). The alternative weights are tabulated below:

\begin{align}
\label{dualweights:spin1_uv_weights_nonreversed}
\begin{tabular}{|c|c|}
\hline
\quad
\tikz{0.7}{
\draw[lgray,line width=1pt,->] (-1,0) -- (1,0);
\draw[lgray,line width=4pt,->] (0,-1) -- (0,1);
\node[left] at (-1,0) {\tiny $0$};\node[right] at (1,0) {\tiny $0$};
\node[below] at (0,-1) {\tiny $m$};\node[above] at (0,1) {\tiny $m$};
}
\quad
&
\quad
\tikz{0.7}{
\draw[lgray,line width=1pt,->] (-1,0) -- (1,0);
\draw[lgray,line width=4pt,->] (0,-1) -- (0,1);
\node[left] at (-1,0) {\tiny $0$};\node[right] at (1,0) {\tiny $1$};
\node[below] at (0,-1) {\tiny $m$};\node[above] at (0,1) {\tiny $m-1$};
}
\quad
\\[1.3cm]
\quad
$\dfrac{x+uq^{m}}{1+vx}$
\quad
& 
\qquad
$ \dfrac{(1-q^{m})}{1+vx}$
\qquad
\\[0.7cm]
\hline
\quad
\tikz{0.7}{
\draw[lgray,line width=1pt,->] (-1,0) -- (1,0);
\draw[lgray,line width=4pt,->] (0,-1) -- (0,1);
\node[left] at (-1,0) {\tiny $1$};\node[right] at (1,0) {\tiny $0$};
\node[below] at (0,-1) {\tiny $m$};\node[above] at (0,1) {\tiny $m+1$};
}
\quad
&
\quad
\tikz{0.7}{
\draw[lgray,line width=1pt,->] (-1,0) -- (1,0);
\draw[lgray,line width=4pt,->] (0,-1) -- (0,1);
\node[left] at (-1,0) {\tiny $1$};\node[right] at (1,0) {\tiny $1$};
\node[below] at (0,-1) {\tiny $m$};\node[above] at (0,1) {\tiny $m$};
}
\quad
\\[1.3cm] 
\quad
$\dfrac{(1-uvq^{m})x}{1+vx}$
\quad
&
\quad
$\dfrac{(1+v xq^{m})}{1+vx}$
\quad
\\[0.7cm]
\hline
\end{tabular} 
\end{align}   
Graphically, we distinguish the vertices in~\ref{dualweights:spin1_uv_weights_nonreversed} 
from their counterparts in~\ref{weights:spin1_uv_weights} by reversing the orientation 
of the horizontal line and interchanging $0$ and $1$ on these lines. 
We then find that these vertices are converted to:

\begin{align}
\label{dualweights:spin1_uv_weights_reversed}
\begin{tabular}{|c|c|}
\hline
\quad
\tikz{0.7}{
\draw[lred,line width=1pt,<-] (-1,0) -- (1,0);
\draw[lgray,line width=4pt,->] (0,-1) -- (0,1);
\node[left] at (-1,0) {\tiny $1$};\node[right] at (1,0) {\tiny $1$};
\node[below] at (0,-1) {\tiny $m$};\node[above] at (0,1) {\tiny $m$};
}
\quad
&
\quad
\tikz{0.7}{
\draw[lred,line width=1pt,<-] (-1,0) -- (1,0);
\draw[lgray,line width=4pt,->] (0,-1) -- (0,1);
\node[left] at (-1,0) {\tiny $1$};\node[right] at (1,0) {\tiny $0$};
\node[below] at (0,-1) {\tiny $m$};\node[above] at (0,1) {\tiny $m-1$};
}
\quad
\\[1.3cm]
\quad
$\dfrac{x+uq^{m}}{1+vx}$
\quad
& 
\qquad
$ \dfrac{(1-q^{m})}{1+vx}$
\qquad
\\[0.7cm]
\hline
\quad
\tikz{0.7}{
\draw[lred,line width=1pt,<-] (-1,0) -- (1,0);
\draw[lgray,line width=4pt,->] (0,-1) -- (0,1);
\node[left] at (-1,0) {\tiny $0$};\node[right] at (1,0) {\tiny $1$};
\node[below] at (0,-1) {\tiny $m$};\node[above] at (0,1) {\tiny $m+1$};
}
\quad
&
\quad
\tikz{0.7}{
\draw[lred,line width=1pt,<-] (-1,0) -- (1,0);
\draw[lgray,line width=4pt,->] (0,-1) -- (0,1);
\node[left] at (-1,0) {\tiny $0$};\node[right] at (1,0) {\tiny $0$};
\node[below] at (0,-1) {\tiny $m$};\node[above] at (0,1) {\tiny $m$};
}
\quad
\\[1.3cm] 
\quad
$\dfrac{(1-uvq^{m})x}{1+vx}$
\quad
&
\quad
$\dfrac{(1+v xq^{m})}{1+vx}$
\quad
\\[0.7cm]
\hline
\end{tabular} 
\end{align}   

In this way we define our \emph{dual vertex weights} $\mathrm{W}^{*}_{x;(u,v)}(a,b,c,d)$, whose non zero values are indicated above~\ref{dualweights:spin1_uv_weights_reversed}.

\begin{equation}\label{weights:spin1_uv_generic_reversed}
\mathrm{W}^{*}_{x;(u,v)}(a,b,c,d) \equiv \mathrm{W}^{*}_{x;(u,v)}\left(\tikz{0.9}{
\draw[lred,line width=1pt,<-] (-1,0) -- (1,0);
\draw[lgray,line width=4pt,->] (0,-1) -- (0,1);
\node[left] at (-1,0) {\tiny $b$};\node[right] at (1,0) {\tiny $d$};
\node[below] at (0,-1) {\tiny $a$};\node[above] at (0,1) {\tiny $c$};
\node[right] at (1.7,0) {$\leftarrow x $};
\node[below] at (0,-1.4) {$\uparrow$};
\node[below] at (0,-1.9) {$(u,v)$};
}\right)
\end{equation}

There is a notable resemblance between the weights in~\ref{weights:spin1_uv_generic_reversed} and the original ones in~\ref{weights:spin1_uv_weights}. By reflecting the vertices in~\ref{dualweights:spin1_uv_weights_reversed} across the horizontal axis and interchanging \( u \leftrightarrow v \), we nearly recover the original weights~\ref{weights:spin1_uv_weights}, with only a minor discrepancy in the second and third vertex weights. This mismatch can be resolved through a simple gauge transformation. More precisely, we observe that:

\begin{equation}\label{eq:weights_and_dualweights}
\mathrm{W}_{x,(u,v)}(a,b,c,d)\,=\,\dfrac{(uv;q)_{c}}{(uv;q)_{a}} \,\dfrac{(q;q)_{a}}{(q;q)_{c}}\,  \mathrm{W}^{*}_{x,(v,u)}(c,b,a,d). 
\end{equation}

\begin{defn}
We introduce a $\mathrm{T}^{*}(x;N)\in \text{End}(\mathbb{V}(N))$ as:
\begin{multline}\label{def:transfermatrix_spin1_dual}
\mathrm{T}^{*}(x;N):\, \ket{i_1}\otimes \ket{i_2}\otimes {\cdots}\ket{i_N}
\mapsto\\
\sum_{k_1,k_2,\dots,k_{N} \geq 0} 
\left(
\tikz{1}{\draw[lred,line width=1.5pt,<-] 
(-2,1) node[black,left] {$\ss 0$}-- (4,1) node[black,right] {$*$};
\draw[lgray,line width=4pt,->] 
(3,0) node[below,scale=0.6] {\color{black} $i_{1}$} -- (3,2) node[above,scale=0.6] {\color{black} $k_{1}$};
\draw[lgray,line width=4pt,->] 
(2,0) node[below,scale=0.6] {\color{black} $i_{2}$} -- (2,2) node[above,scale=0.6] {\color{black} $k_{2}$};
\draw[lgray,line width=4pt,->] 
(1,0) -- (1,2);
\draw[lgray,line width=4pt,->] 
(0,0)  -- (0,2);
\draw[lgray,line width=4pt,->] 
(-1,0) node[below,scale=0.6] {\color{black} $i_{N}$} -- (-1,2) node[above,scale=0.6] {\color{black} $k_{N}$};
\node[left] at (-3,1) {$x \rightarrow$};
\node at (0.5,-0.2) {$\cdots$};
\node at (0.5,2.2) {$\cdots$};
\draw[->](3,-1.3) node[below] {$\ss (u_{1},v_{1})$}--(3,-1);
\draw[->](2,-1.3) node[below] {$\ss (u_{2},v_{2})$}--(2,-1);
\draw (0.5,-1.3) node[below] {$\dots $};
}\right)\,\ket{k_1}\otimes \ket{k_2}\otimes {\cdots}\otimes \ket{k}_{N}.
\end{multline}
\end{defn}

Let us briefly describe the boundary conditions of the single-row lattice above. The top, bottom, and left boundaries are fixed. The right boundary is again labelled by \( * \), indicating that it is \emph{free}, a particle may enter or not. Given the boundary conditions and the conservation condition at each vertex, we can conclude that there is a unique configuration. Then the weight of this single-row lattice is simply the product of the weights of the vertices.

Similar to $\mathrm{T}(x)$, we write $\mathrm{T}^{*}(x)\,=\,\mathrm{T}^{*}(x;\infty)$ to denote the corresponding lift of the operator~\ref{def:transfermatrix_spin1_dual} to $\mathbb{V}(\infty)$.

\begin{prop}\label{prop:dua_RLL_relation_spin1_uv}
The weights given in~\eqref{weights:spin1_uv_weights} together with~\eqref{weights:spin1_uv_generic_reversed} satisfy the \emph{YBE} equation with the following weights:

\begin{align}\label{weights:Rmatrix_spin1_dual}
\begin{tabular}{|c|c|c|}
\hline
\quad
\tikz{0.6}{
	\draw[lred,line width=1.5pt,<-] (-1,0) -- (1,0);
	\draw[lgray,line width=1.5pt,->] (0,-1) -- (0,1);
	\node[left] at (-1,0) {\tiny $1$};\node[right] at (1,0) {\tiny $1$};
	\node[below] at (0,-1) {\tiny $0$};\node[above] at (0,1) {\tiny $0$};
}
\quad
&
\quad
\tikz{0.6}{
	\draw[lred,line width=1.5pt,<-] (-1,0) -- (1,0);
	\draw[lgray,line width=1.5pt,->] (0,-1) -- (0,1);
	\node[left] at (-1,0) {\tiny $1$};\node[right] at (1,0) {\tiny $1$};
	\node[below] at (0,-1) {\tiny $1$};\node[above] at (0,1) {\tiny $1$};
}
\quad
&
\quad
\tikz{0.6}{
	\draw[lred,line width=1.5pt,<-] (-1,0) -- (1,0);
	\draw[lgray,line width=1.5pt,->] (0,-1) -- (0,1);
	\node[left] at (-1,0) {\tiny $1$};\node[right] at (1,0) {\tiny $0$};
	\node[below] at (0,-1) {\tiny $1$};\node[above] at (0,1) {\tiny $0$};
}
\quad
\\[1.3cm]
\quad
$1$
\qquad
& 
\quad
$\dfrac{q(1-xy)}{1-qxy}$
\quad
& 
\quad
$\dfrac{1-q}{1-qxy}$
\quad
\\[0.7cm]
\hline
\quad
\tikz{0.6}{
	\draw[lred,line width=1.5pt,<-] (-1,0) -- (1,0);
	\draw[lgray,line width=1.5pt,->] (0,-1) -- (0,1);
	\node[left] at (-1,0) {\tiny $0$};\node[right] at (1,0) {\tiny $0$};
	\node[below] at (0,-1) {\tiny $1$};\node[above] at (0,1) {\tiny $1$};
}
\quad
&
\quad
\tikz{0.6}{
	\draw[lred,line width=1.5pt,<-] (-1,0) -- (1,0);
	\draw[lgray,line width=1.5pt,->] (0,-1) -- (0,1);
	\node[left] at (-1,0) {\tiny $0$};\node[right] at (1,0) {\tiny $0$};
	\node[below] at (0,-1) {\tiny $0$};\node[above] at (0,1) {\tiny $0$};
}
\quad
&
\quad
\tikz{0.6}{
	\draw[lred,line width=1.5pt,<-] (-1,0) -- (1,0);
	\draw[lgray,line width=1.5pt,->] (0,-1) -- (0,1);
	\node[left] at (-1,0) {\tiny $0$};\node[right] at (1,0) {\tiny $1$};
	\node[below] at (0,-1) {\tiny $0$};\node[above] at (0,1) {\tiny $1$};
}
\quad
\\[1.3cm]
\quad
$1$
\quad
& 
\quad
$\dfrac{1-xy}{1-qxy}$
\quad
&
\quad
$\dfrac{(1-q)xy}{1-qxy}$
\quad 
\\[0.7cm]
\hline
\end{tabular}
\end{align}

That is, for any fixed boundary, we have the following equality of partition functions:
\begin{align}\label{eq:YBE_pictorial_spin1_dual}
\sum_{c_1,c_2,c_{3}}
\tikz{0.9}{
\draw[lred,line width=1pt,<-]
(-2,1) node[above,scale=0.6] {\color{black} $a_2$} -- (-1,0) node[below,scale=0.6] {\color{black} $c_2$} -- (1,0) node[right,scale=0.6] {\color{black} $b_2$};
\draw[lgray,line width=1pt,->] 
(-2,0) node[below,scale=0.6] {\color{black} $a_1$} -- (-1,1) node[above,scale=0.6] {\color{black} $c_1$} -- (1,1) node[right,scale=0.6] {\color{black} $b_1$};
\draw[lgray,line width=4pt,->] 
(0,-1) node[below,scale=0.6] {\color{black} $a_{3}$} -- (0,0.5) node[scale=0.6] {\color{black} $c_{3}$} -- (0,2) node[above,scale=0.6] {\color{black} $b_{3}$};
\node[left] at (-2.2,1) {$\ss x \rightarrow $};
\node[left] at (-2.2,0) {$\ss y \rightarrow$};
\draw[->](0,-2) node[below] {$\ss (u,v)$}--(0,-1.7);
}
\quad
=
\quad
\sum_{c_1,c_2,c_{3}}
\tikz{0.9}{
\draw[lred,line width=1pt,<-] 
(-1,1) node[left,scale=0.6] {\color{black} $a_2$} -- (1,1) node[above,scale=0.6] {\color{black} $c_2$} -- (2,0) node[below,scale=0.6] {\color{black} $b_2$};
\draw[lgray,line width=1pt,->] 
(-1,0) node[left,scale=0.6] {\color{black} $a_1$} -- (1,0) node[below,scale=0.6] {\color{black} $c_1$} -- (2,1) node[above,scale=0.6] {\color{black} $b_1$};
\draw[lgray,line width=4pt,->] 
(0,-1) node[below,scale=0.6] {\color{black} $a_{3}$} -- (0,0.5) node[scale=0.6] {\color{black} $c_{3}$} -- (0,2) node[above,scale=0.6] {\color{black} $b_{3}$};
\node[left] at (-1.5,1) {$\ss x \rightarrow$};
\node[left] at (-1.5,0) {$\ss  y \rightarrow$};
\draw[->](0,-2) node[below] {$\ss (u,v)$}--(0,-1.7);
}
\end{align}
\end{prop}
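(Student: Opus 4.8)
The plan is to deduce the proposition from the Yang--Baxter equation already established in Proposition~\ref{prop:RLL_relation_spin1_uv}, by transforming the spectral parameter of the thin line labelled $x$. Recall that the dual weights $\mathrm{W}^{*}_{x;(u,v)}$ were built from $\mathrm{W}_{x;(u,v)}$ by three steps: (i) dividing every vertex by the label-independent scalar $(x+v)/(1+ux)$; (ii) the substitution $x\mapsto x^{-1}$ (accompanied by a sign, as in the remark preceding~\eqref{weights:spin1_uv_generic_reversed}, which plays no role below); and (iii) the purely graphical convention of drawing the horizontal line with reversed orientation while interchanging the labels $0\leftrightarrow 1$ along it, which carries the table~\eqref{dualweights:spin1_uv_weights_nonreversed} into~\eqref{dualweights:spin1_uv_weights_reversed}. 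I would apply exactly these three operations to the $x$-line, and to it alone, throughout the identity of partition functions~\eqref{eq:YBE_pictorial_spin1}.

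Carrying this out: operation (iii) only renames edge states, so~\eqref{eq:YBE_pictorial_spin1} remains valid verbatim once the $x$-line is redrawn as a reversed (red) line and the boundary data is relabelled accordingly; operation (ii) is a substitution in a rational identity and hence preserves it. For operation (i), note that on each side of~\eqref{eq:YBE_pictorial_spin1} the $x$-line crosses the thick $(u,v)$-line exactly once, so there is precisely one $\mathrm{W}_{x;(u,v)}$-vertex per side; dividing it by $(x+v)/(1+ux)$ therefore multiplies both sides of the equation by the same factor, which cancels. After the three operations, every vertex where the $x$-line meets the thick line has become a $\mathrm{W}^{*}_{x;(u,v)}$-vertex by the definition of the dual weights; the unique vertex where the $y$-line meets the thick line is untouched and remains a $\mathrm{W}_{y;(u,v)}$-vertex; and the single vertex where the $x$-line crosses the $y$-line, originally carrying the $R$-weight $\mathrm{R}_{x/y}$ of~\eqref{weights:Rmatrix_spin1}, now carries that weight with $x$ replaced by $x^{-1}$ and with its horizontal edges reversed and relabelled. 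The resulting statement is exactly~\eqref{eq:YBE_pictorial_spin1_dual}, provided one checks that this last transformed $R$-weight is the table~\eqref{weights:Rmatrix_spin1_dual}.

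That verification is the only computation, and it is short: under $x\mapsto x^{-1}$ the ratio $y/x$ occurring in~\eqref{weights:Rmatrix_spin1} becomes $xy$, which reproduces the rational prefactors of~\eqref{weights:Rmatrix_spin1_dual}, and one then matches the six admissible configurations one by one after reversing the horizontal line and swapping $0\leftrightarrow 1$ on it; no gauge factor intervenes, since these $R$-weights are pure numbers. I expect the only genuine obstacle to be keeping the orientation-reversal and the $0\leftrightarrow 1$ relabelling consistent across all the diagrams involved; there is no conceptual difficulty. As an alternative, one may bypass the transformation argument and re-derive the identity from scratch in the style of the proof of Proposition~\ref{prop:RLL_relation_spin1_uv}: specialise the master Yang--Baxter equation~\eqref{eq:YBE} by taking $\l=\m=1$ with $\n$ generic, substituting $x\mapsto -x^{-1}$, $y\mapsto -y$, $z=v$, $q^{-\n}=uv$, and multiplying both sides by a suitable monomial in $u$ to absorb the right-edge normalisations.
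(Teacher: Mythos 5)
Your proposal is correct and follows essentially the same route as the paper: the paper's proof likewise divides both sides of~\eqref{eq:YBE_pictorial_spin1} by $(x+v)/(1+ux)$, substitutes $x\mapsto x^{-1}$, and complements the labels on the horizontal edges of the $x$-row, with your observation that the single $\mathrm{W}$-vertex on the $x$-line per side makes the division harmless and your check of the transformed $R$-weights filling in exactly the details the paper leaves implicit.
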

\begin{proof}

We begin by dividing both sides of the Equation~\ref{eq:YBE_pictorial_spin1} by \( \dfrac{x+v}{1+ux} \) and then make the substitutions \( x \mapsto x^{-1} \). Lastly, we complement the labels on the horizontal edges along the row associated with the spectral parameter \( x \).
\end{proof}

\begin{prop}\label{prop:commutationrelation_spin1_Cauchy}
 The following exchange relation holds:
 \begin{equation}\label{eq:communtation_t_and_tt}
     \,\mathrm{T}^{*}(x)\,\mathrm{T}(y)\,=\,\dfrac{1-qxy}{1-xy}\,\mathrm{T}(y)\,\mathrm{T}^{*}(x)
 \end{equation}
provided that
\[
\left| \dfrac{x + u}{1 + v x} \cdot \dfrac{y + v}{1 + u y} \right| < 1.
\]
\end{prop}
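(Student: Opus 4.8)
\emph{Proof strategy.} I would prove this by the ``railroad'' (train) argument, using the mixed Yang--Baxter relation of Proposition~\ref{prop:dua_RLL_relation_spin1_uv} as the only input. The composite $\mathrm{T}^{*}(x)\mathrm{T}(y)$ is the partition function of a two-row lattice, with the lower row carrying rapidity $y$ and the weights~\eqref{weights:spin1_uv_weights}, the upper row carrying rapidity $x$ and the dual weights~\eqref{weights:spin1_uv_generic_reversed}, both rows having the frozen left boundary $0$ and the free right boundary $*$, and column $j$ carrying the inhomogeneity $(u_{j},v_{j})$. One attaches to this picture a single $\mathrm{R}$-vertex from~\eqref{weights:Rmatrix_spin1_dual} (the crossing of the red and gray rapidity lines, with spectral argument $xy$), places it at one end of the two rows, and slides it to the other end.

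\emph{Main step.} Each time the $\mathrm{R}$-vertex is pushed past a column, the move is an instance of Proposition~\ref{prop:dua_RLL_relation_spin1_uv} and the partition function is unchanged; after the $\mathrm{R}$-vertex has crossed every column the two rapidity rows have been interchanged, so the bulk now computes $\mathrm{T}(y)\mathrm{T}^{*}(x)$. It remains to evaluate the $\mathrm{R}$-vertex against the two types of boundary. At the free boundary $*$ the $\mathrm{R}$-vertex is transparent: summing its weights over the free edge values equals $1$ for every choice of the bulk-facing edges --- a stochasticity property of~\eqref{weights:Rmatrix_spin1_dual} analogous to Proposition~\ref{prop:sum_to_unity}. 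At the frozen boundary $0$ the $\mathrm{R}$-vertex collapses against the fixed $0$ labels, and a direct computation with the entries of~\eqref{weights:Rmatrix_spin1_dual} produces the overall scalar $\frac{1-qxy}{1-xy}$. Combining the two evaluations gives the claimed exchange relation. It is convenient to run this argument first for the truncations $\mathrm{T}(y;N),\mathrm{T}^{*}(x;N)$, where everything is a finite sum, and then pass to $N\to\infty$.

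\emph{Where the hypothesis enters, and the main obstacle.} The delicate point --- and the reason for the assumption $\left|\frac{x+u}{1+vx}\cdot\frac{y+v}{1+uy}\right|<1$ --- is the passage to the infinite lattice. The matrix element $\bra{\kappa}\mathrm{T}^{*}(x)\mathrm{T}(y)\ket{\mu}$ is an infinite sum $\sum_{\nu}\bra{\kappa}\mathrm{T}^{*}(x)\ket{\nu}\bra{\nu}\mathrm{T}(y)\ket{\mu}$ over intermediate partitions $\nu$, since the $y$-line may carry a particle arbitrarily far to the right before the $x$-line brings it back; a state $\nu$ with the displaced box in column $j$ contributes a term of order $\left(\frac{x+u}{1+vx}\cdot\frac{y+v}{1+uy}\right)^{j}$, because $\frac{y+v}{1+uy}$ and $\frac{x+u}{1+vx}$ are precisely the weights of a particle traversing an empty column in the lower row~\eqref{weights:spin1_uv_weights} and in the upper row~\eqref{dualweights:spin1_uv_weights_reversed}, respectively. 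So the series converges absolutely exactly under the stated hypothesis, and the same geometric bound controls the tail of the lattice and legitimises both sliding the $\mathrm{R}$-vertex ``to infinity'' and the interchanges of summation in the railroad argument. I expect this convergence bookkeeping, together with a careful treatment of the boundary terms, to be the only real obstacle: the algebraic content is entirely the single local move of Proposition~\ref{prop:dua_RLL_relation_spin1_uv}, exactly as the full symmetry of $\JJ^{\buv}_{\lambda}$ (Theorem~\ref{thm:JJ_symmetric}) reduced to commuting two neighbouring transfer matrices.
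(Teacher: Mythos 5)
Your proposal is correct and follows essentially the same route as the paper: attach the mixed $\mathrm{R}$-vertex of~\eqref{weights:Rmatrix_spin1_dual} to the two-row lattice, slide it across via Proposition~\ref{prop:dua_RLL_relation_spin1_uv}, absorb it at the free boundary by sum-to-unity, and extract the scalar $\tfrac{1-qxy}{1-xy}$ at the frozen end. The paper invokes the hypothesis precisely to kill the boundary term in which the cross injects a particle into each semi-infinite row (an infinite product of per-column factors of modulus $<1$), which is the same geometric bound you describe; your further remark that it also ensures absolute convergence of the sum over intermediate states is left implicit in the paper.
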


\begin{proof}
We begin considering the following two row lattice with a ``cross" attached at the left boundary:
\begin{equation}
\tikz{0.7}{
\draw[lred,line width=1.5pt,<-] 
(-3,2) node[black,left] {$\ss 0$}-- (-2,1)-- (4,1) node[black,right] {$\ss *$};
\draw[lgray,line width=1.5pt,->] 
(-3,1) node[black,left] {$\ss 0$}-- (-2,2)-- (4,2) node[black,right] {$\ss *$};
\draw[lgray,line width=4pt,->] 
(3,0) -- (3,3) ;
\draw[lgray,line width=4pt,->] 
(2,0) -- (2,3);
\draw[lgray,line width=4pt,->] 
(1,0) -- (1,3);
\draw[lgray,line width=4pt,->] 
(0,0) -- (0,3);
\node[left] at (-4,1) {$y \rightarrow$};
\node[left] at (-4,2) {$x \rightarrow$};
\node at (-1,0.5) {$\cdots$};
\node at (-1,1.5) {$\cdots$};
\node at (-1,2.5) {$\cdots$};
} 
\,
\end{equation} 
Then there are two possibilities at the left boundary of the lattice, as shown below:
\begin{multline}
\,
\tikz{0.7}{
\draw[lred,line width=1.5pt,<-] 
(-3,2) node[black,left] {$\ss 0$}-- (-2,1) node[black] {$\ss 0$}-- (4,1) node[black,right] {$\ss *$};
\draw[lgray,line width=1.5pt,->] 
(-3,1) node[black,left] {$\ss 0$}-- (-2,2) node[black] {$\ss 0$}-- (4,2) node[black,right] {$\ss *$};
\draw[lgray,line width=4pt,->] 
(3,0) -- (3,3) ;
\draw[lgray,line width=4pt,->] 
(2,0) -- (2,3);
\draw[lgray,line width=4pt,->] 
(1,0) -- (1,3);
\draw[lgray,line width=4pt,->] 
(0,0) -- (0,3);
\node[left] at (-4,1) {$y \rightarrow$};
\node[left] at (-4,2) {$x \rightarrow$};
\node at (-1,0.5) {$\cdots$};
\node at (-1,1.5) {$\cdots$};
\node at (-1,2.5) {$\cdots$};
}
\,
+
\,
\tikz{0.7}{
\draw[lred,line width=1.5pt,<-] 
(-3,2) node[black,left] {$\ss 0$}-- (-2,1) node[black] {$\ss 1$}-- (4,1) node[black,right] {$\ss *$};
\draw[lgray,line width=1.5pt,->] 
(-3,1) node[black,left] {$\ss 0$}-- (-2,2) node[black] {$\ss 1$}-- (4,2) node[black,right] {$\ss *$};
\draw[lgray,line width=4pt,->] 
(3,0) -- (3,3) ;
\draw[lgray,line width=4pt,->] 
(2,0) -- (2,3);
\draw[lgray,line width=4pt,->] 
(1,0) -- (1,3);
\draw[lgray,line width=4pt,->] 
(0,0) -- (0,3);
\node[left] at (-4,1) {$y \rightarrow$};
\node[left] at (-4,2) {$x \rightarrow$};
\node at (-1,0.5) {$\cdots$};
\node at (-1,1.5) {$\cdots$};
\node at (-1,2.5) {$\cdots$};
} 
\end{multline}
Then the convergence constraint implies that the weight of all the configuration with left boundary occupied with particles vanishes. On the other hand,  we apply the YBE relation to obtain the following equivalent expression, shown below, which concludes the proof.
\begin{multline}
\tikz{0.7}{
\draw[lgray,line width=1.5pt,->] 
(-2,1) node[black,left] {$\ss 0$}-- (4,1)--(5,2) node[black,right] {$\ss *$};
\draw[lred,line width=1.5pt,<-] 
(-2,2) node[black,left] {$\ss 0$}-- (4,2)  --(5,1)node[black,right] {$\ss *$};
\draw[lgray,line width=4pt,->] 
(3,0) -- (3,3) ;
\draw[lgray,line width=4pt,->] 
(2,0) -- (2,3);
\draw[lgray,line width=4pt,->] 
(1,0) -- (1,3);
\draw[lgray,line width=4pt,->] 
(0,0) -- (0,3);
\node[left] at (-3,1) {$y \rightarrow$};
\node[left] at (-3,2) {$x \rightarrow$};
\node at (-1,0.5) {$\cdots$};
\node at (-1,1.5) {$\cdots$};
\node at (-1,2.5) {$\cdots$};
} 
\,
=
\,
\tikz{0.7}{
\draw[lgray,line width=1.5pt,->] 
(-2,1) node[black,left] {$\ss 0$}-- (4,1) node[black,right] {$\ss *$};
\draw[lred,line width=1.5pt,<-] 
(-2,2) node[black,left] {$\ss 0$}-- (4,2) node[black,right] {$\ss *$};
\draw[lgray,line width=4pt,->] 
(3,0) -- (3,3) ;
\draw[lgray,line width=4pt,->] 
(2,0) -- (2,3);
\draw[lgray,line width=4pt,->] 
(1,0) -- (1,3);
\draw[lgray,line width=4pt,->] 
(0,0) -- (0,3);
\node[left] at (-3,1) {$y \rightarrow$};
\node[left] at (-3,2) {$x \rightarrow$};
\node at (-1,0.5) {$\cdots$};
\node at (-1,1.5) {$\cdots$};
\node at (-1,2.5) {$\cdots$};
} 
\end{multline}    
\end{proof}

\

\begin{prop}\label{prop:dualweights_HL}
Let $\lambda$ be a partition, and define 
\[
c_{\lambda} = \prod_{i=1}^{\ell(\lambda)} \dfrac{(u_{i}v_{i};q)_{m_{i}(\lambda)}}{(q;q)_{m_{i}(\lambda)}}
\]
where $\ell(\lambda)$ denotes the length of the partition $\lambda$, and $m_i(\lambda)$ is number of rows of size $i$ in the Young diagram of $\lambda$. Then for any two partitions $\mu\subseteq\lambda$, we have
\begin{equation}
\bra{\mu}\,\tt(x_1) \cdots \tt(x_n) \ket{\lambda} 
= \frac{c_{\lambda}}{c_{\mu}} \, \mathfrak{J}_{\lambda/\mu}^{(\mathbf{v}, \mathbf{u})}(x_1, \dots, x_n).
\end{equation}
\end{prop}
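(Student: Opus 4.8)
\emph{Proof proposal.} The plan is to compare the lattice model that computes the left-hand side $\bra{\mu}\tt(x_1)\cdots\tt(x_n)\ket{\lambda}$ with the one of Definition~\ref{def:spin1_uv_HL} that computes $\JJ^{\bvu}_{\lambda/\mu}$, the bridge between the two being the local weight identity~\eqref{eq:weights_and_dualweights}. First I would write the left-hand side as a partition function: by Definition~\ref{def:transfermatrix_spin1_dual} it is the partition function of an $n$-row lattice in which the row with spectral parameter $x_i$ and the column with parameters $(u_j,v_j)$ carry the dual vertex weight $\mathrm{W}^{*}_{x_i;(u_j,v_j)}$, the horizontal lines have the fixed ($0$) / free ($*$) boundary inherited from $\tt$, and the vertical lines carry the multiplicities of $\mu$ at one end and of $\lambda$ at the other.

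Next I would use~\eqref{eq:weights_and_dualweights} to rewrite every vertex: a vertex $\mathrm{W}^{*}_{x_i;(u_j,v_j)}$ with bottom label $\beta$ and top label $\tau$ equals $\dfrac{(u_jv_j;q)_{\tau}}{(u_jv_j;q)_{\beta}}\,\dfrac{(q;q)_{\beta}}{(q;q)_{\tau}}$ times the vertex $\mathrm{W}_{x_i;(v_j,u_j)}$ obtained by interchanging its top and bottom labels. Substituting this simultaneously at all vertices of a configuration, the scalar prefactors collect column by column; within column $j$ the top label of each vertex coincides with the bottom label of the vertex immediately above it, so the product of prefactors in that column telescopes and is determined solely by the two extreme vertical edge labels of column $j$, namely $m_j(\lambda)$ and $m_j(\mu)$. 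Multiplying over $j$ and using the definitions of $c_\lambda$ and $c_\mu$, one gets a single scalar equal to $c_\lambda/c_\mu$, independent of the configuration, which therefore factors out of the sum over configurations.

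What remains inside the sum is the partition function of the lattice whose vertices are the reinterpreted weights $\mathrm{W}_{x_i;(v_j,u_j)}$. Since~\eqref{eq:weights_and_dualweights} also interchanged the top and bottom of each vertex, I would reflect this lattice across the horizontal axis; the reflection turns the reversed horizontal line of $\tt$ into the ordinary horizontal line of $\mathrm{T}$ while preserving the fixed/free boundary, and sends the two partitions to the correct ends, so the reflected lattice is exactly the one of Definition~\ref{def:spin1_uv_HL} for the swapped parameters $\bvu$. The reflection also permutes the spectral parameters $x_1,\dots,x_n$, but this is immaterial since $\JJ_{\lambda/\mu}$ is symmetric in them, by the same transfer-matrix commutation argument that proves Theorem~\ref{thm:JJ_symmetric}. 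Reinstating the constant in front then gives $\bra{\mu}\tt(x_1)\cdots\tt(x_n)\ket{\lambda}=\dfrac{c_\lambda}{c_\mu}\,\JJ^{\bvu}_{\lambda/\mu}(x_1,\dots,x_n)$.

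The step I expect to require the most care is the bookkeeping that makes the constant come out as $c_\lambda/c_\mu$ rather than its reciprocal: one must keep straight which of $\lambda,\mu$ sits at which end of the lattice in $\bra{\mu}\tt(x_1)\cdots\tt(x_n)\ket{\lambda}$ compared with the definition of $\JJ^{\bvu}_{\lambda/\mu}$, the orientation in which the telescoping product collapses, and the $u\leftrightarrow v$ exchange, and line all three up consistently. The passage from $\mathrm{T}(x;N)$ to its stable limit causes no difficulty: the scalar prefactor equals $1$ on every column $j$ with $m_j(\lambda)=m_j(\mu)=0$, so only finitely many columns are involved.
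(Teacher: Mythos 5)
Your proposal is correct and follows essentially the same route as the paper, whose proof of this proposition is the one-line observation that it "follows immediately" from Definition~\ref{def:spin1_uv_HL} and relation~\eqref{eq:weights_and_dualweights}; your write-up simply makes explicit the steps the paper leaves implicit (the vertex-by-vertex application of~\eqref{eq:weights_and_dualweights}, the column-wise telescoping of the gauge factors to $c_\lambda/c_\mu$, and the reflection plus $u\leftrightarrow v$ swap identifying the residual lattice with that of $\JJ^{(\mathbf{v},\mathbf{u})}_{\lambda/\mu}$). The bookkeeping you flag does come out as stated, with the telescoping running from $m_j(\mu)$ at the $\bra{\mu}$ boundary to $m_j(\lambda)$ at the $\ket{\lambda}$ boundary.
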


\begin{proof}
The proof follows immediately from the Definition~\ref{def:spin1_uv_HL} and the relation~\ref{eq:weights_and_dualweights}. 
\end{proof}

\

\begin{thm}\label{thm:cauchy_spin1}
Fix two positive numbers $n$ and $m$, and let $\lambda$ and $\mu$ be two partitions. For any complex numbers $x_{1},\dots,x_{n}$ and $y_{1},\dots,y_{m}$ such that for $i,j,k,\ell$:
\[
\left| \dfrac{x_{i} + u_{k}}{1 + v_{k} x_{i}} \cdot \dfrac{y_{j} + v_{\ell}}{1 + u_{\ell} y_{j}} \right| < 1,
\]
then the family of Hall--Littlewood polynomials satisfy the following summation identity:
\begin{multline}\label{eq:spin1_cauchy}
\,\sum_{\kappa}\,\dfrac{c_{\kappa}}{c_{\lambda}}\,\JJ^{(\mathbf{v},\mathbf{u})}_{\kappa/\lambda}(x_{1},\dots,x_{n})\,\, \JJ^{\buv}_{\kappa/\mu}(y_{1},\dots,y_{m})\,   \,\\
=\,\prod^{m}_{j=1}\prod^{n}_{i=1}\,\dfrac{1-q x_{i}y_{j}}{1-x_{i}y_{j}}\,\sum_{\kappa}\,\dfrac{c_{\mu}}{c_{\kappa}}\,\JJ^{(\mathbf{v},\mathbf{u})}_{\mu/\kappa}(x_{1},\dots,x_{n})\,\JJ^{\buv}_{\lambda/\kappa}(y_{1},\dots,y_{m}),
\end{multline}
where the summation on the left hand side is over all partitions that contain both $\lambda$ and $\mu$, and on the right hand side the summation is over all partitions that are contained in both $\lambda$ and $\mu$.
\end{thm}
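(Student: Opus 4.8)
The plan is to realise both sides of \eqref{eq:spin1_cauchy} as a single matrix element of a product of transfer matrices, and then move all the dual transfer matrices past all the ordinary ones by iterating the exchange relation of Proposition~\ref{prop:commutationrelation_spin1_Cauchy}.

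First I would rewrite every factor in \eqref{eq:spin1_cauchy} in operator form. By Proposition~\ref{prop:dualweights_HL}, for $\lambda\subseteq\kappa$ one has $\tfrac{c_{\kappa}}{c_{\lambda}}\,\JJ^{(\mathbf{v},\mathbf{u})}_{\kappa/\lambda}(x_{1},\dots,x_{n})=\bra{\lambda}\mathrm{T}^{*}(x_{1})\cdots\mathrm{T}^{*}(x_{n})\ket{\kappa}$, while by Definition~\ref{def:spin1_uv_HL} one has $\JJ^{\buv}_{\kappa/\mu}(y_{1},\dots,y_{m})=\bra{\kappa}\mathrm{T}(y_{m})\cdots\mathrm{T}(y_{1})\ket{\mu}$. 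Hence the left-hand side of \eqref{eq:spin1_cauchy} equals $\sum_{\kappa}\bra{\lambda}\mathrm{T}^{*}(x_{1})\cdots\mathrm{T}^{*}(x_{n})\ket{\kappa}\bra{\kappa}\mathrm{T}(y_{m})\cdots\mathrm{T}(y_{1})\ket{\mu}$, and since the vectors $\ket{\kappa}$ indexed by partitions form the basis of $\mathbb{V}(\infty)$, inserting $\sum_{\kappa}\ket{\kappa}\bra{\kappa}=\mathrm{Id}$ collapses this to $\bra{\lambda}\mathrm{T}^{*}(x_{1})\cdots\mathrm{T}^{*}(x_{n})\,\mathrm{T}(y_{m})\cdots\mathrm{T}(y_{1})\ket{\mu}$ (here the restriction $\kappa\supseteq\lambda,\mu$ is automatic from the two identities). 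Applying the same two identities with $\kappa\subseteq\mu$ and $\kappa\subseteq\lambda$ turns the right-hand sum into $\bra{\lambda}\mathrm{T}(y_{m})\cdots\mathrm{T}(y_{1})\,\mathrm{T}^{*}(x_{1})\cdots\mathrm{T}^{*}(x_{n})\ket{\mu}$, so \eqref{eq:spin1_cauchy} becomes equivalent to the operator identity
\begin{equation*}
\mathrm{T}^{*}(x_{1})\cdots\mathrm{T}^{*}(x_{n})\,\mathrm{T}(y_{m})\cdots\mathrm{T}(y_{1})
=\prod_{i=1}^{n}\prod_{j=1}^{m}\frac{1-qx_{i}y_{j}}{1-x_{i}y_{j}}\;
\mathrm{T}(y_{m})\cdots\mathrm{T}(y_{1})\,\mathrm{T}^{*}(x_{1})\cdots\mathrm{T}^{*}(x_{n}).
\end{equation*}

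This last identity I would prove by iterating Proposition~\ref{prop:commutationrelation_spin1_Cauchy}: commute $\mathrm{T}^{*}(x_{n})$ rightward through $\mathrm{T}(y_{m}),\dots,\mathrm{T}(y_{1})$ in turn, each move being a single application of \eqref{eq:communtation_t_and_tt} and contributing a scalar, so that after $m$ moves one has accumulated $\prod_{j=1}^{m}\tfrac{1-qx_{n}y_{j}}{1-x_{n}y_{j}}$; then repeat for $\mathrm{T}^{*}(x_{n-1}),\dots,\mathrm{T}^{*}(x_{1})$. The $\mathrm{T}^{*}$'s never change their relative order, nor do the $\mathrm{T}$'s, and the total prefactor produced by the $nm$ elementary moves is exactly $\prod_{i,j}\tfrac{1-qx_{i}y_{j}}{1-x_{i}y_{j}}$.

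The only genuine difficulty is analytic rather than algebraic: the $\kappa$-sum on the left-hand side of \eqref{eq:spin1_cauchy} is infinite, so one must justify that the vector $\mathrm{T}(y_{m})\cdots\mathrm{T}(y_{1})\ket{\mu}$ pairs absolutely with the functional $\bra{\lambda}\mathrm{T}^{*}(x_{1})\cdots\mathrm{T}^{*}(x_{n})$, that the insertion of $\sum_{\kappa}\ket{\kappa}\bra{\kappa}$ is legitimate, and that the exchange relation \eqref{eq:communtation_t_and_tt} may be applied column by column on the infinite tensor product. This is precisely the role of the hypothesis $\bigl|\tfrac{x_{i}+u_{k}}{1+v_{k}x_{i}}\cdot\tfrac{y_{j}+v_{\ell}}{1+u_{\ell}y_{j}}\bigr|<1$: it is the convergence condition of Proposition~\ref{prop:commutationrelation_spin1_Cauchy} imposed columnwise, and under it each vertical line contributes a convergent geometric-type series, so every intermediate semi-infinite sum converges absolutely and uniformly. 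I would therefore carry out the formal computation above first, and then close the argument with a short estimate, using the stated bound, that controls the tail of the $\kappa$-sum (equivalently, bounds the column contributions to the relevant partition functions) and thereby legitimises the interchange of summations and the operator manipulations.
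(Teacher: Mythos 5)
Your proposal is correct and follows essentially the same route as the paper: both realise the two sides of \eqref{eq:spin1_cauchy} as the single matrix element $\bra{\lambda}\mathrm{T}^{*}(x_{1})\cdots\mathrm{T}^{*}(x_{n})\,\mathrm{T}(y_{1})\cdots\mathrm{T}(y_{m})\ket{\mu}$ via Definition~\ref{def:spin1_uv_HL} and Proposition~\ref{prop:dualweights_HL}, insert a resolution of the identity over partitions $\kappa$, and iterate the exchange relation \eqref{eq:communtation_t_and_tt} to move all the $\mathrm{T}^{*}$'s past the $\mathrm{T}$'s. Your additional remarks on absolute convergence of the intermediate $\kappa$-sums under the stated hypothesis are a welcome refinement of a point the paper leaves implicit.
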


\begin{proof}
Consider the following:
\begin{align}
\mathcal{Z}_{\lambda,\mu}(x_{1},\dots,x_{n},y_{1},\dots,y_{m})\,&:=\,\bra{\lambda}\tt(x_{1})\cdots \tt(x_{n})\t(y_{1})\cdots\t(y_{m})\ket{\mu}.
\end{align}
Then using~\ref{def:spin1_uv_HL} and ~\ref{prop:dualweights_HL}, we get the following summation:
\

\begin{align}
\mathcal{Z}_{\lambda,\mu}(x_{1},\dots,x_{n},y_{1},\dots,y_{m})\,&=\,\sum_{\kappa}\,\bra{\lambda}\tt(x_{1})\cdots \tt(x_{n})\ket{\kappa}\bra{\kappa}\t(y_{1})\cdots\t(y_{m})\ket{\mu}\\ 
&=\,\sum_{\kappa}\,\dfrac{c_{\kappa}}{c_{\lambda}}\,\JJ^{(\mathbf{v},\mathbf{u})}_{\kappa/\lambda}(x_{1},\dots,x_{n})\,\, \JJ^{\buv}_{\kappa/\mu}(y_{1},\dots,x_{m}).
\end{align}

\

On the other hand, by repeated use of the exchange relation~\ref{eq:communtation_t_and_tt}, we also get the following expression for $\mathcal{Z}_{\lambda,\mu}(x_{1},\dots,x_{m},y_{1},\dots,y_{n})$:

\begin{align}
&=\,\prod^{n}_{j=1}\prod^{m}_{i=1}\,\dfrac{1-q x_{i}y_{j}}{1-x_{i}y_{j}}\,\,\bra{\lambda}\t(y_{1})\cdots \t(y_{n})\tt(x_{1})\cdots\tt(x_{n})\ket{\mu}\\
&=\,\prod^{n}_{j=1}\prod^{m}_{i=1}\,\dfrac{1-q x_{i}y_{j}}{1-x_{i}y_{j}}\,\,\bra{\lambda}\t(y_{1})\cdots \t(y_{n})\ket{\kappa}\bra{\kappa}\tt(x_{1})\cdots\tt(x_{n})\ket{\mu}\\
&=\prod^{n}_{j=1}\prod^{m}_{i=1}\,\dfrac{1-q x_{i}y_{j}}{1-x_{i}y_{j}}\,\sum_{\kappa}\,\dfrac{c_{\mu}}{c_{\kappa}}\,\JJ^{(\mathbf{v},\mathbf{u})}_{\mu/\kappa}(x_{1},\dots,x_{n})\,\JJ^{\buv}_{\lambda/\kappa}(y_{1},\dots,y_{m}).
\end{align}
Then by comparing the two equivalent expressions for $\mathcal{Z}$, we recover the desired Cauchy identity.
\end{proof}

\subsection{Degenerations}\label{subsec:HL-degenerations}
The symmetric functions $\JJ^{\buv}_{\lambda/\mu}$ reduces to many known families of symmetric functions/polynomials. The reduction to Schur polynomials and Hall--Littlewood polynomials are well known. However, we shall include them to give a comprehensive list. We believe that the best way to prove these reductions is by comparing the branching formulae. We begin by writing a branching formula, which is a closed expression for the skew functions in one variable, $\JJ^{\buv}_{\lambda/\mu}(x)$.

\subsubsection{Transfer matrix and partitions}
We now describe how the data of $\lambda$, $\mu$, and the skew shape $\lambda/\mu$ are encoded in a single-row transfer matrix. If a box appears in the $i$-th column (from the left) of the skew diagram, then the right edge of the $i$-th vertex (from the right) in the lattice row is labelled $1$. Below, we illustrate this bijection. To aid understanding, we colour the boxes in the skew diagram to match the corresponding labels in the configuration of the single-row lattice. We emphasise that this colouring has no significance beyond this example.

\[
\lambda/\mu =
\begin{ytableau}
*(lgray) & *(lgray) & *(lgray)& *(lgray)& *(lgray)\\
*(lgray) & *(lgray) & *(lgray) & *(red) \\
*(lgray) & *(dgreen) \\
*(lgray)\\
*(blue)
\end{ytableau}
\quad
\Rightarrow
\quad
\tikz{1}{\draw[lgray,line width=1.5pt,->] 
(-2,1)-- (4,1);
\draw[lgray,line width=4pt,->] 
(3,0) node[below] {\color{black} $\ss 2$} -- (3,2) node[above] {\color{black} $\ss 2$};
\draw[lgray,line width=4pt,->] 
(2,0) node[below] {\color{black} $\ss 1$} -- (2,2) node[above] {\color{black} $\ss 0$};
\draw[lgray,line width=4pt,->] 
(1,0) node[below] {\color{black} $\ss 0$} -- (1,2) node[above] {\color{black} $\ss 1$};
\draw[lgray,line width=4pt,->] 
(0,0) node[below] {\color{black} $\ss 1$}  -- (0,2) node[above] {\color{black} $\ss 0$};
\draw[lgray,line width=4pt,->] 
(-1,0) node[below] {\color{black} $\ss 1$} -- (-1,2) node[above] {\color{black} $\ss 1$};
\node[left] at (-3,1) {$x \rightarrow$};
\node at (1,-1) {$\lambda$};
\node at (1,3) {$\mu$};
\node at (-1.5,1) {$\ss 0$};
\node at (-0.5,1) {$\ss 0$};
\node[red] at (0.5,1) {$\ss 1$};
\node at (1.5,1) {$\ss 0$};
\node[dgreen] at (2.5,1) {$\ss 1$};
\node[blue] at (3.5,1) {$\ss 1$};
}
\]

To present the branching formula in a concise form, we introduce the following notation: Whenever $\mu \prec \lambda$, we write
\begin{align*}
c^{++}_{\lambda/\mu} &:= \{i \in \{1,\dots,\lambda_1\} \mid \text{$i$-th and $(i+1)$-th columns are non-empty} \}, \\
c^{+-}_{\lambda/\mu} &:= \{i \in \{1,\dots,\lambda_1\} \mid \text{$i$-th column is non-empty, $(i+1)$-th is empty} \}, \\
c^{-+}_{\lambda/\mu} &:= \{i \in \{1,\dots,\lambda_1\} \mid \text{$i$-th column is empty, $(i+1)$-th is non-empty} \}, \\
c^{--}_{\lambda/\mu} &:= \{i \in \{1,\dots,\lambda_1\} \mid \text{$i$-th and $(i+1)$-th columns are empty and $m_{i}(\lambda)\neq 0$} \}.
\end{align*}
\begin{rmk}
In this notation, we assume that the $(\lambda_1 + 1)$-st column is always empty.
\end{rmk}

\

\begin{prop}\label{prop:HL_family_branching_formula}
For two partitions $\mu \prec \lambda$, the single variable $\JJ^{\buv}_{\lambda/\mu}$ function has the following expression:
\begin{multline}
\JJ^{\buv}_{\lambda/\mu}(x)=\, \prod_{i\,\in\, c^{--}_{\lambda/\mu}}\,\dfrac{1+u_{i}x q^{m_{i}(\lambda)}}{1+u_{i}x} \hspace{0.2cm} \prod_{i\,\in\, c^{+-}_{\lambda/\mu}} \dfrac{ (1-q^{m_{i}(\lambda)})x}{1+u_{i}x}\\
\prod_{i\,\in\, c^{-+}_{\lambda/\mu}}\dfrac{(1-u_{i}v_{i}q^{m_{i}(\lambda)})}{1+u_{i}x}\hspace{0.2cm} \prod_{i\,\in\, c^{++}_{\lambda/\mu}}\dfrac{(x+v_{i}q^{m_{i}(\lambda)})}{1+u_{i}x}
\end{multline}
and is equal to $0$ whenever $\mu\not\prec\lambda$.
\end{prop}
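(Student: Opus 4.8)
The plan is to compute the partition function directly, exploiting the fact that the single-variable function $\JJ^{\buv}_{\lambda/\mu}(x) = \bra{\lambda}\,\mathrm{T}(x)\,\ket{\mu}$ is the weight of a \emph{single-row} lattice which, as observed just after Definition~\ref{def:spin1_uv_HL}, admits a unique admissible configuration once the boundary is fixed. So the whole task reduces to (a) determining the labels on the horizontal edges of that unique configuration, and (b) multiplying together the corresponding vertex weights from~\eqref{weights:spin1_uv_weights}.

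For step (a), I would index the vertices $j = 1, 2, \dots$ from the right, so that the $j$-th vertex has bottom edge $m_j(\lambda)$, top edge $m_j(\mu)$, left edge $e_{j+1}$ and right edge $e_j$, where $e_0$ is the free right boundary and $e_\infty = 0$ is the far-left boundary. Conservation at vertex $j$ reads $m_j(\lambda) + e_{j+1} = m_j(\mu) + e_j$, and telescoping this from the left gives
\[
e_j \;=\; \sum_{k \ge j}\bigl(m_k(\lambda) - m_k(\mu)\bigr) \;=\; \lambda'_j - \mu'_j ,
\]
the number of boxes in the $j$-th column of $\lambda/\mu$. In particular the right edge of vertex $j$ is $1$ precisely when column $j$ is non-empty, which is the bijection illustrated before the statement. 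For the vanishing claim: if $\mu \not\subseteq \lambda$ then $e_j = \lambda'_j - \mu'_j < 0$ for some $j$, and if $\mu \subseteq \lambda$ but $\lambda/\mu$ is not a horizontal strip then $e_j \ge 2$ for some $j$; either way the forced label on a thin horizontal edge lies outside $\{0,1\}$, so the corresponding weight—hence the entire partition function—is zero, giving $\JJ^{\buv}_{\lambda/\mu}(x) = 0$ unless $\mu \prec \lambda$.

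For step (b), assume $\mu \prec \lambda$, so every $e_j \in \{0,1\}$. Each vertex $j$ then falls into one of the four rows of~\eqref{weights:spin1_uv_weights} according to the pair $(e_{j+1}, e_j) = (\text{left},\text{right}) \in \{0,1\}^2$, i.e.\ according to whether columns $j+1$ and $j$ are empty or not; conservation fixes the top label $m_j(\mu)$ in terms of $m := m_j(\lambda)$ in each case. Reading off the four weights gives $(0,0) \mapsto \frac{1+u_jx q^{m}}{1+u_jx}$, $(0,1) \mapsto \frac{(1-q^{m})x}{1+u_jx}$, $(1,0) \mapsto \frac{1-u_jv_jq^{m}}{1+u_jx}$, $(1,1) \mapsto \frac{x+v_jq^{m}}{1+u_jx}$, which correspond respectively to the index sets $c^{--}_{\lambda/\mu}$, $c^{+-}_{\lambda/\mu}$, $c^{-+}_{\lambda/\mu}$, $c^{++}_{\lambda/\mu}$. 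Taking the product over all vertices yields the stated formula; note that a vertex of type $(0,0)$ with $m_j(\lambda)=0$ contributes a factor $1$, which is exactly why $c^{--}_{\lambda/\mu}$ may carry the extra constraint $m_i(\lambda)\neq 0$ without changing the product.

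The only real work is keeping the ``counting from the right'' indexing straight and verifying the four weight evaluations against the table; I do not anticipate any genuine obstacle, since the Yang--Baxter machinery is not needed here—only the conservation law and the uniqueness of the single-row configuration.
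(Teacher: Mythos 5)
Your proposal is correct and follows exactly the route the paper intends: the paper's proof is the one-line ``follows from the analysis above,'' referring to the bijection between columns of the skew diagram and the horizontal edge labels of the unique single-row configuration, which is precisely what you make explicit via conservation and telescoping. Your identification of the four vertex types with the sets $c^{\pm\pm}_{\lambda/\mu}$, the remark about the harmless $m_i(\lambda)=0$ case for $c^{--}_{\lambda/\mu}$, and the vanishing argument when $\mu\not\prec\lambda$ are all accurate.
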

\begin{proof}
 Follows immediately from the analysis above.
\end{proof}

\

We now show how the branching formula reduces to the various known families of symmetric polynomials:

\subsubsection{Schur polynomials} At $\mathbf{u}=\mathbf{v}=\mathbf{0}$ and $q=0$, the branching formula of $\JJ^{\buv}_{\lambda/\mu}$ reduces as follows:
    \begin{align}
        \JJ^{(\mathbf{0},\mathbf{0})}_{\lambda/\mu}(x)|_{q=0}\,&=\,  \prod_{i\,\in\, c^{+-}_{\lambda/\mu}}x \hspace{0.2cm} \prod_{i\,\in\, c^{++}_{\lambda/\mu}}x\\
        &=\,x^{| c^{+-}_{\lambda/\mu}|+| c^{++}_{\lambda/\mu}|}\\
        &=\,x^{|\lambda/\mu|}
    \end{align}
    From this we can conclude that at $\mathbf{u}=\mathbf{v}=\mathbf{0}$ and $q=0$, $\JJ^{\buv}_{\lambda/\mu}$ reduces to the well known \emph{Schur polynomials}.

\subsubsection{Weak Grothendieck polynomials}  At $\mathbf{v}=\mathbf{0}$, $u_{i}=\alpha$ for all $i\in \mathbb{Z}_{>0}$, and $q=0$, the branching formula of $\JJ^{\buv}_{\lambda/\mu}$ reduces as follows:
      \begin{align}
        \JJ^{((\alpha,\dots,\alpha),\mathbf{0})}_{\lambda/\mu}(x)|_{q=0}\,&=\, \prod_{i\,\in\, c^{--}_{\lambda/\mu}}\,\dfrac{1}{1+\alpha x} \hspace{0.2cm} \prod_{i\,\in\, c^{+-}_{\lambda/\mu}} \dfrac{x}{1+\alpha x}
\prod_{i\,\in\, c^{-+}_{\lambda/\mu}}\dfrac{1}{1+\alpha x}\hspace{0.2cm} \prod_{i\,\in\, c^{++}_{\lambda/\mu}}\dfrac{x}{1+\alpha x}
\end{align}
We can rewrite this expression more compactly as:

\begin{align}
        \JJ^{((\alpha,\dots,\alpha),\mathbf{0})}_{\lambda/\mu}(x)|_{q=0}\,=\, \prod_{i\,\in\, c^{--}_{\lambda/\mu}\,\, \cup\,\, c^{-+}_{\lambda/\mu}}\,\dfrac{1}{1+\alpha x} \hspace{0.2cm} \prod_{i\,\in\, c^{+-}_{\lambda/\mu}\,\,\cup\,\, c^{++}_{\lambda/\mu}} \dfrac{x}{1+\alpha x}
\end{align}
Observe that
\[
|c^{--}_{\lambda/\mu}| + |c^{-+}_{\lambda/\mu}| = r(\mu / \widetilde{\lambda}),
\]
where $\widetilde{\lambda} := (\lambda_2, \lambda_3, \dots)$ is the partition obtained by removing the first part of $\lambda$, and $r(\kappa / \nu)$ denotes the number of non zero rows in the skew diagram $\kappa / \nu$.

Thus, the expression further simplifies to:

\begin{align}
        \JJ^{((\alpha,\dots,\alpha),\mathbf{0})}_{\lambda/\mu}(x)|_{q=0}\,=\,\left(\dfrac{1}{1+\alpha x}\right)^{r(\mu/\widetilde{\lambda})}\,\,\left( \dfrac{x}{1+\alpha x}\right)^{|\lambda/\mu|}
\end{align}
This matches with the branching formula for the \emph{weak Grothendieck polynomials} $J_{\lambda}$ at $\beta=0$ from ~\cite[Proposition 8.8]{Gcom:Y2017Duality}.

\subsubsection{Weak dual Grothendieck polynomials}  At $\mathbf{u}=\mathbf{0}$, $v_{i}=\alpha$ for all $i\in \mathbb{Z}_{>0}$, and $q=0$, the branching formula of $\JJ^{\buv}_{\lambda/\mu}$ reduces as follows:
      \begin{align}
        \JJ^{(\mathbf{0},(\alpha,\dots,\alpha))}_{\lambda/\mu}(x)|_{q=0}\,&=\,\prod_{i\,\in\, c^{+-}_{\lambda/\mu}} x
\hspace{0.2cm} \prod_{i\,\in\, c^{++}_{\lambda/\mu}}(x\,+\,\alpha\,\mathbf{1}_{m_{i}(\lambda)=0})
\end{align}
We can rewrite this expression more compactly as:
  \begin{align}
        \JJ^{(\mathbf{0},(\alpha,\dots,\alpha))}_{\lambda/\mu}(x)|_{q=0}\,&=\,x^{r(\lambda/\mu)}\,(x+\alpha)^{|\lambda/\mu|-r(\lambda/\mu)}
\end{align}
This matches with the branching formula for the \emph{weak dual Grothendieck polynomials} $j_{\lambda}$ at $\beta=0$ from ~\cite[Theorem 8.6]{Gcom:Y2017Duality}. 
\subsubsection{Hall--Littlewood polynomials}  At $\mathbf{u}=\mathbf{0}$, $\mathbf{v}=\mathbf{0}$, the branching formula of $\JJ^{\buv}_{\lambda/\mu}$ reduces as follows:
\begin{equation}
\JJ^{(\mathbf{0},\mathbf{0})}_{\lambda/\mu}(x)=\,  \prod_{i\,\in\, c^{+-}_{\lambda/\mu}} (1-q^{m_{i}(\lambda)})x \hspace{0.2cm} \prod_{i\,\in\, c^{++}_{\lambda/\mu}}x
\end{equation}
We can rewrite this expression more compactly as:
  \begin{align}
        \JJ^{(\mathbf{0},\mathbf{0})}_{\lambda/\mu}(x)\,&=\,\varphi_{\lambda/\mu}(q)\,x^{|\lambda/\mu|}\,
\end{align}
where
\[
\varphi_{\lambda/\mu}(q)\,=\,\prod_{i:m_{i}(\lambda)=m_{i}(\mu)+1}(1-q^{m_{i}(\lambda)})
\]
This matches with the branching formula for the \emph{Hall--Littlewood polynomials} $Q_{\lambda/\mu}$ from~\cite[Equation 33]{Hlat:WZJ2016}.

\section{Family of \texorpdfstring{$q$}{q}-Whittaker polynomials}\label{sec:q-whitt-family}
In this section, we obtain the a family of polynomials \( \GG^{(u,v)}_{\lambda} \) as partition functions. These polynomials contain $q$-Whittaker polynomials, Inhomogeneous $q$-Whittaker polynomials, Grothendieck polynomials and dual Grothendieck polynomials. We shall begin by introducing necessary weights and build transfer matrices which we use to define our polynomials. We then list a series of identities and properties about these polynomials all of which are obtained by repeated application of the \emph{Yang--Baxter equation}. 

In order to define the polynomials $\GG^{(u,v)}_{\lambda}$ as partition functions, we need the following \emph{Boltzmann weights}:

\begin{multline}\label{weights:spinL_uv}
\mathbb{W}_{x;(u,v)}(a,b,c,d) \equiv \mathbb{W}_{x;(u,v)}
\left(\tikz{0.9}{
\draw[lgray,line width=4pt,->] (-1,0) -- (1,0);
\draw[lgray,line width=4pt,->] (0,-1) -- (0,1);
\node[left] at (-1,0) {\tiny $b$};\node[right] at (1,0) {\tiny $d$};
\node[below] at (0,-1) {\tiny $a$};\node[above] at (0,1) {\tiny $c$};
\node[left] at (-1.5,0) {$x \rightarrow$};
\node[below] at (0,-1.4) {$\uparrow$};
\node[below] at (0,-1.9) {$(u,v)$};
}\right)\,\\
=\, \delta_{a + b = c + d} \, x^{d}\,
\sum_{p}\,{(ux;q)_{c-p}\,}\,{(v/x;q)_{p}\,}(x/v )^{p-b}\,\binom{c+d-p}{c-p}_{q}\,\binom{b}{p}_{q}
\end{multline}

These weights are obtained by take $q^{-\m}=uv$, $y=v$, $x\mapsto x q^{-\l}$,  divide the weights by $u^{d}$, and then take $q^{-\l}\mapsto 0$ in the general weights~\ref{generalweights}.

We now define \emph{transfer matrices} as single row lattices. Let $V$\,=\ $\text{Span}_{\mathbb{C}} \{\ket{i}\}_{i\in\mathbb{Z}_{\geq 0}}$ be an infinite dimensional vector space. Let $\mathbb{V}(N)=V_{1}\otimes V_{2} \otimes\cdots\otimes V_{N}$ where each of $V_{i}$ is a copy of $V$. We define the \emph{row to row transfer matrices} as element in $\text{End}(\mathbb{V}(N))$.

\begin{defn}
We introduce a $\mathbb{T}(x;N)\in \text{End}(\mathbb{V}(N))$ as:
\begin{multline}\label{def:spinL_transfermatrix}
\mathbb{T}(x;N):\, \ket{i_1}\otimes \ket{i_2}\otimes {\cdots}\ket{i_N}
\mapsto\\
\sum_{k_1,k_2,\dots,k_{N} \geq 0} 
\left(
\tikz{1}{\draw[lgray,line width=4pt,->] 
(-2,1) node[black,left] {$\ss 0$}-- (4,1) node[black,right] {$*$};
\draw[lgray,line width=4pt,->] 
(3,0) node[below,scale=0.6] {\color{black} $i_{1}$} -- (3,2) node[above,scale=0.6] {\color{black} $k_{1}$};
\draw[lgray,line width=4pt,->] 
(2,0) node[below,scale=0.6] {\color{black} $i_{2}$} -- (2,2) node[above,scale=0.6] {\color{black} $k_{2}$};
\draw[lgray,line width=4pt,->] 
(1,0) -- (1,2);
\draw[lgray,line width=4pt,->] 
(0,0)  -- (0,2);
\draw[lgray,line width=4pt,->] 
(-1,0) node[below,scale=0.6] {\color{black} $i_{N}$} -- (-1,2) node[above,scale=0.6] {\color{black} $k_{N}$};
\node[left] at (-3,1) {$x \rightarrow$};
\node at (0.5,-0.2) {$\cdots$};
\node at (0.5,2.2) {$\cdots$};
\draw[->](3,-1.3) node[below] {$\ss (u_{1},v_{1})$}--(3,-1);
\draw[->](2,-1.3) node[below] {$\ss (u_{2},v_{2})$}--(2,-1);
\draw (0.5,-1.3) node[below] {$\dots $};
}\right)\,\ket{k_1}\otimes \ket{k_2}\otimes {\cdots}\otimes \ket{k}_{N}.
\end{multline}
\end{defn}

\

Let us briefly describe the boundary conditions of the single-row lattice above. The top, bottom, and left boundaries are fixed. The right boundary is labelled by \( * \), indicating that it is \emph{free}, meaning any number of particles may exit. Given the boundary conditions and the conservation condition at each vertex, we can conclude that there is a unique configuration. Then the weight of this single-row lattice is simply the product of the weights of the vertices. Similar to the case of the transfer matrices in the previous section, we write
\[
\mathbb{T}(x)\,=\,\mathbb{T}(x;\infty)
\]
to denote the corresponding lift of the operator~\ref{def:spinL_transfermatrix}.

\begin{defn}\label{def:spinL_uv_poly}
Let \( \lambda = (\lambda_{1} \geq \lambda_{2} \geq \cdots) \) be a partition. We associate to it the vector
\[
\bra{\lambda'} := \bigotimes_{i=1}^{\infty} \bra{m_{i}(\lambda')} \in \mathbb{V}^{*}(\infty)
\qquad
\ket{\mu'} := \bigotimes_{i=1}^{\infty} \ket{m_{i}(\mu')} \in \mathbb{V}(\infty)
\]
where \( m_{i}(\lambda') \) denotes the number of columns of size \( i \) in the Young diagram of \( \lambda \). For partitions \( \mu \subseteq \lambda \), we define the polynomial \( \GG^{\buv}_{\lambda/\mu} \) as the partition function of a lattice model:
\begin{equation}\label{eq:spinL_uv_poly}
  \GG^{\buv}_{\lambda/\mu}(x_{1}, \dots, x_{n}) := \bra{\lambda'}\, \mathbb{T}(x_{n}) \cdots \mathbb{T}(x_{1})\, \ket{\mu'}.
\end{equation}

In other words, $\GG^{\buv}_{\lambda/\mu}$ is the partition function of the following lattice:

\[
\tikz{0.8}{\foreach\y in {-1,0.5,2,3.5,5}{
\draw[lgray,line width=4pt,->] (0,\y) node[left,black]{$\ss 0$}-- (8,\y) node[right,black]{$\ss *$};
}
\foreach\x in {1,2.5,4,5.5,7}{
\draw[lgray,line width=4pt,->] (\x,-2) -- (\x,6);
\node at (2.5,-2.5) {$ \dots$};
\node at (3.5,-2.5) {$ \dots$};
\node at (5.5,-2.5) {$\ss m_{2}(\lambda')$};
\node at (7,-2.5) {$\ss m_{1}(\lambda')$};
\node[black] at (2.5,6.5) {$ \dots$};
\node at (3.5,6.5) {$ \dots$};
\node[black] at (5.5,6.5) {$\ss m_{2}(\mu')$};
\node[black] at (7,6.5) {$\ss m_{1}(\mu')$};
\draw[->](7,-3.5) node[below] {$\ss (u_{1},v_{1})$}--(7,-3.2);
\draw[->](5.5,-3.5) node[below] {$\ss (u_{2},v_{2})$}--(5.5,-3.2);
\draw[->](-1.3,5) node[left,black] {$\ss x_{1}$}--(-1,5);
\draw(-1.5,2) node[left,black] {$\vdots$};
\draw[->](-1.3,-1) node[left] {$\ss x_{n}$}--(-1,-1);
}
}
\]
using the weights~\ref{weights:spinL_uv}.
\end{defn}

We shall now proceed to prove that the polynomials $\GG^{\buv}_{\lambda/\mu}$ are symmetric. In order to prove this, we need a new set of weights which will satisfy the \emph{Yang--Baxter equation} with the weights~\ref{weights:spinL_uv}.

\begin{equation}\label{weights:spinL_Rmatrix}
\mathbb{R}_{x/y}(a,b,c,d) \equiv \mathbb{R}_{x/y}\left(
{\tikz{0.7}{
\draw[lgray,line width=4pt,->] (-1,0) -- (1,0);
\draw[lgray,line width=4pt,->] (0,-1) -- (0,1);
\node[left] at (-1,0) {\tiny $b$};\node[right] at (1,0) {\tiny $d$};
\node[below] at (0,-1) {\tiny $a$};\node[above] at (0,1) {\tiny $c$};
\node[left] at (-1.5,0) {$x \rightarrow$};
\node[below] at (0,-1.6) {$\uparrow$};
\node[below] at (0,-2.3) {$y$};
}=}
\right)\,=\, \delta_{a + b = c + d} \times 
\left(\dfrac{x}{y}\right)^{d} \left(\dfrac{x}{y};q\right)_{c-b}\, \binom{a}{c-b}_{q}
\end{equation}

Before we present the YBE relations, we pause to present a property of the weights~\eqref{weights:spinL_Rmatrix}. For any fixed pair of non-negative integers $a$ and $b$, the $R$-matrix satisfies the following \emph{sum to unity} condition:
\[
\sum_{c,d} \,\mathbb{R}_{x/y}(a,b,c,d)\,=\, 1.
\]
Graphically, this identity can be represented as:
\[
\tikz{0.7}{
\draw[lgray,line width=4pt,->] (-1,0) -- (1,0);
\draw[lgray,line width=4pt,->] (0,-1) -- (0,1);
\node[right] at (1,0) { $*$};
\node[above] at (0,1) { $*$};
\node[left] at (-1.5,0) {$x \rightarrow$};
\node[below] at (0,-1.6) {$\uparrow$};
\node[below] at (0,-2.3) {$y$};}
\quad = \quad
\tikz{0.7}{
\draw[lgray,line width=4pt,->] (-1,0) -- (1,0);
\draw[lgray,line width=4pt,->] (-1,1) -- (1,1);
\node[left] at (-1.5,0) {$y \rightarrow$};
\node[left] at (-1.5,1) {$x \rightarrow$};}
\]
\begin{rmk}
Observe that at $u=y^{-1}$ and $v=0$, we have the following equality of weights:
\[
\mathbb{R}_{x/y}(a,b,c,d)\,=\,y^{-d}\,\mathbb{W}_{x,(y^{-1},0)}(a,b,c,d)
\]
\end{rmk}

\begin{prop}\label{prop:spinL_RLL_relation}
The weights given in~\eqref{weights:spinL_uv} together with~\eqref{weights:spinL_Rmatrix} satisfy the \emph{YBE} relation. That is, for any fixed boundary, we have the following equality of partition functions:
\begin{align}\label{eq:YBE_pictorial_spinL}
\sum_{c_1,c_2,c_{3}}
\tikz{0.9}{
\draw[lgray,line width=4pt,->]
(-2,1) node[above,scale=0.6] {\color{black} $a_2$} -- (-1,0) node[below,scale=0.6] {\color{black} $c_2$} -- (1,0) node[right,scale=0.6] {\color{black} $b_2$};
\draw[lgray,line width=4pt,->] 
(-2,0) node[below,scale=0.6] {\color{black} $a_1$} -- (-1,1) node[above,scale=0.6] {\color{black} $c_1$} -- (1,1) node[right,scale=0.6] {\color{black} $b_1$};
\draw[lgray,line width=4pt,->] 
(0,-1) node[below,scale=0.6] {\color{black} $a_{3}$} -- (0,0.5) node[scale=0.6] {\color{black} $c_{3}$} -- (0,2) node[above,scale=0.6] {\color{black} $b_{3}$};
\node[left] at (-2.2,1) {$\ss (x,\l) \rightarrow $};
\node[left] at (-2.2,0) {$\ss (y,\m) \rightarrow$};
\draw[->](0,-2) node[below] {$\ss (z,\n)$}--(0,-1.7);
}
\quad
=
\quad
\sum_{c_1,c_2,c_{3}}
\tikz{0.9}{
\draw[lgray,line width=4pt,->] 
(-1,1) node[left,scale=0.6] {\color{black} $a_2$} -- (1,1) node[above,scale=0.6] {\color{black} $c_2$} -- (2,0) node[below,scale=0.6] {\color{black} $b_2$};
\draw[lgray,line width=4pt,->] 
(-1,0) node[left,scale=0.6] {\color{black} $a_1$} -- (1,0) node[below,scale=0.6] {\color{black} $c_1$} -- (2,1) node[above,scale=0.6] {\color{black} $b_1$};
\draw[lgray,line width=4pt,->] 
(0,-1) node[below,scale=0.6] {\color{black} $a_{3}$} -- (0,0.5) node[scale=0.6] {\color{black} $c_{3}$} -- (0,2) node[above,scale=0.6] {\color{black} $b_{3}$};
\node[left] at (-1.5,1) {$\ss (x,\l) \rightarrow$};
\node[left] at (-1.5,0) {$\ss  (y ,\m)\rightarrow$};
\draw[->](0,-2) node[below] {$\ss (z,\n)$}--(0,-1.7);
}
\end{align}
\end{prop}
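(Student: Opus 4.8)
The plan is to derive~\eqref{eq:YBE_pictorial_spinL} from the master Yang--Baxter equation~\eqref{eq:YBE} of Theorem~\ref{theorem:weightsoftheYBE} by a single degeneration, exactly in the spirit of the proof of Proposition~\ref{prop:RLL_relation_spin1_uv}. First I would identify the two thick horizontal lines of~\eqref{eq:YBE_pictorial_spinL} with lines $1$ and $2$ of~\eqref{eq:YBE}, taken with \emph{equal} spins $\l=\m$, and the thick vertical line carrying $(u,v)$ with line $3$, of spin $\n$. Then I would specialise: set $z=v$ and $q^{-\n}=uv$ on the vertical line, and rescale $x\mapsto x\,q^{-\l}$, $y\mapsto y\,q^{-\l}$ on the two horizontal lines --- these are precisely the substitutions used to pass from the generic weights~\eqref{generalweights} to the Boltzmann weights~\eqref{weights:spinL_uv}.

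Next I would multiply both sides of~\eqref{eq:YBE} by $u^{-(b_1+b_2)}$ and then send $q^{-\l}\to 0$. By conservation at every vertex this normalisation distributes as a division by $u^{d}$, with $d$ the right-edge label, at each of the two horizontal--vertical crossings; hence, by the very definition of the weights~\eqref{weights:spinL_uv}, those two crossings become $\mathbb{W}_{x;(u,v)}$ and $\mathbb{W}_{y;(u,v)}$. The remaining crossing is the one between the two horizontal lines: after the rescalings its spectral argument is simply $x/y$, so it is $W_{\l,\l}(x/y;q;a,b,c,d)$, and a short direct computation of its $q^{-\l}\to0$ limit from~\eqref{generalweights} --- in which only the $p=b$ term of the internal sum survives --- shows that this limit equals $\mathbb{R}_{x/y}(a,b,c,d)$ as in~\eqref{weights:spinL_Rmatrix}, in agreement with the Remark following~\eqref{weights:spinL_Rmatrix}. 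With all three vertices identified, \eqref{eq:YBE} becomes~\eqref{eq:YBE_pictorial_spinL}.

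The step I expect to be the main obstacle is checking that the limit $q^{-\l}\to0$ is well defined vertex by vertex. The generic weight~\eqref{generalweights} carries a prefactor $q^{a\l-d\m}$ that diverges as $q^{-\l}\to0$, so one must verify that, once the spectral parameters have been rescaled by $q^{-\l}$, these divergent powers of $q^{\pm\l}$ are exactly cancelled by the powers of $q^{-\l}$ produced by the $\Phi$-functions inside the sum, leaving a finite rational expression. This is the computation already made implicitly in deriving~\eqref{weights:spinL_uv}, and it goes through verbatim for each of the three vertices in the three-line configuration --- for the horizontal--horizontal vertex this cancellation is exactly what forces all terms of the sum except $p=b$ to vanish. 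Once finiteness at each vertex is granted, nothing more is required: \eqref{eq:YBE} is an identity of rational functions in the spectral parameters, the spins and $q$, so it survives both the substitutions above and the limit in which every term converges, and the asserted equality of partition functions~\eqref{eq:YBE_pictorial_spinL} follows.
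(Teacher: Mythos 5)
Your proof is correct and follows essentially the same route as the paper's: both degenerate the master Yang--Baxter equation~\eqref{eq:YBE} via the substitutions $z=v$, $q^{-\n}=uv$ and a rescaling of the horizontal spectral parameters by negative powers of $q$, followed by the limit in which those powers vanish. The only difference is cosmetic --- you take the two horizontal spins equal and rescale both $x$ and $y$ by $q^{-\l}$, which makes the identification of the horizontal--horizontal crossing with $\mathbb{R}_{x/y}$ particularly transparent, whereas the paper keeps $\l,\m$ independent and sends $q^{-\l},q^{-\m}$ to zero separately; your explicit handling of the $u^{-(b_1+b_2)}$ normalisation (correctly distributed using conservation at the $W_{\l,\m}$ vertex) and your verification that only the $p=b$ term of~\eqref{generalweights} survives in the limit are welcome amplifications of details the paper leaves implicit.
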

\begin{proof}
We substitute \( x \mapsto xq^{-\l} \), \( y \mapsto yq^{-\m} \), \( z = v \), and \( q^{-\n} = u v \) into Equation~\ref{eq:YBE}. And then take $q^{-\l}$ and $q^{-\m}$ to zero.
\end{proof}

\begin{thm}\label{thm:GG_symmetric}
The polynomials $\GG^{\buv}_{\lambda/\mu}(x_{1},\dots,x_{n})$ are invariant under permutation of the variables $x$.
\end{thm}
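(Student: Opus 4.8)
The plan is to follow the proof of Theorem~\ref{thm:JJ_symmetric} essentially verbatim, replacing the transfer matrix $\mathrm{T}(x)$ of Section~\ref{sec:HL-family} by $\mathbb{T}(x)$ and the mixed $R$-matrix~\eqref{weights:Rmatrix_spin1} by the bosonic $R$-matrix $\mathbb{R}_{x/y}$ of~\eqref{weights:spinL_Rmatrix}. First I would reduce the claim to the commutation relation $\mathbb{T}(x)\,\mathbb{T}(y)=\mathbb{T}(y)\,\mathbb{T}(x)$ in $\mathrm{End}(\mathbb{V}(\infty))$: granting this, the definition $\GG^{\buv}_{\lambda/\mu}(x_{1},\dots,x_{n})=\bra{\lambda'}\mathbb{T}(x_{n})\cdots\mathbb{T}(x_{1})\ket{\mu'}$ is unchanged under swapping any two adjacent arguments $x_{i},x_{i+1}$, and adjacent transpositions generate $S_{n}$, which gives full symmetry.

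To prove the commutation, I would read $\mathbb{T}(x)\,\mathbb{T}(y)$ as the partition function of a two-row lattice, the top row carrying spectral parameter $x$ and the bottom row $y$, with all vertical lines thick, both left boundaries equal to $0$, and both right boundaries free. The key steps, in order, are: (i) attach an $\mathbb{R}_{x/y}$-vertex at the far left between the two horizontal lines -- since both of its incoming edges are labelled $0$, conservation forces its outgoing edges to be $0$, and $\mathbb{R}_{x/y}(0,0,0,0)=1$, so neither the partition function nor the boundary data changes; (ii) apply Proposition~\ref{prop:spinL_RLL_relation} repeatedly to drag this $\mathbb{R}$-vertex rightward past each vertical line, preserving the partition function at every step; (iii) once the $\mathbb{R}$-vertex has crossed all columns it sits at the far right with incoming edges carrying whatever emerges from the bulk and its two outgoing edges free, so summing over those free labels and using the sum-to-unity identity $\sum_{c,d}\mathbb{R}_{x/y}(a,b,c,d)=1$ deletes it. What remains is the same two-row lattice with the rows for $x$ and $y$ interchanged, i.e. $\mathbb{T}(y)\,\mathbb{T}(x)$.

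I do not expect a genuine obstacle: the only point needing care is that every sum in sight is finite, so that the manipulations on the infinite lattice are legitimate. This holds because the left boundaries are $0$, so no particle ever enters a horizontal edge from outside and the particle number on every horizontal edge is bounded by the finite number of particles in $\ket{\mu'}$; moreover $\sum_{c,d}\mathbb{R}_{x/y}(a,b,c,d)$ is supported on the finitely many $c$ with $b\le c\le a+b$. As in Section~\ref{sec:HL-family}, all but finitely many tensor factors sit in the vacuum state $\ket{0}$, on which both the auxiliary $\mathbb{R}$-vertex and the tails of the transfer matrices act trivially, so the identity descends from the finite lattices $\mathbb{V}(N)$ and no convergence hypothesis is required (in contrast with the Cauchy identities).
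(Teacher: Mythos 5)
Your proposal is correct and coincides with the paper's argument: the paper proves Theorem~\ref{thm:GG_symmetric} by declaring it ``same as \ref{thm:JJ_symmetric}'', whose proof is precisely your attach-a-cross-at-the-left, drag it through by the Yang--Baxter relation (here Proposition~\ref{prop:spinL_RLL_relation}), and remove it at the right via the sum-to-unity property of $\mathbb{R}_{x/y}$. Your added remarks on finiteness of the sums are a welcome bit of extra care but do not change the route.
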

\begin{proof}
same as ~\ref{thm:JJ_symmetric}.    
\end{proof}

\

\begin{prop}
The polynomials $\mathfrak{G}_{\lambda/\mu}^{\buv}$ satisfy the following stability property:
\begin{equation}
 \mathfrak{G}_{\lambda/\mu}^{\buv}(x_{1},\dots,x_{n},x_{n+1})|_{x_{n+1}=0}\,=\,\mathfrak{G}_{\lambda/\mu}^{\buv}(x_{1},\dots,x_{n}).
\end{equation}
\end{prop}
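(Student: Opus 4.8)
The plan is to follow the same strategy as in the proof of the stability property of $\JJ^{\buv}_{\lambda/\mu}$. Expanding the definition \eqref{eq:spinL_uv_poly} in $n+1$ variables and inserting a resolution of the identity of $\mathbb{V}(\infty)$ between the extra transfer matrix $\mathbb{T}(x_{n+1})$ and the rest gives
\[
\GG^{\buv}_{\lambda/\mu}(x_{1},\dots,x_{n},x_{n+1})
=\bra{\lambda'}\mathbb{T}(x_{n+1})\mathbb{T}(x_{n})\cdots\mathbb{T}(x_{1})\ket{\mu'}
=\sum_{\nu}\bra{\lambda'}\mathbb{T}(x_{n+1})\ket{\nu'}\;\GG^{\buv}_{\nu/\mu}(x_{1},\dots,x_{n}),
\]
so that the statement reduces to the claim $\bra{\lambda'}\mathbb{T}(0)\ket{\nu'}=\delta_{\lambda,\nu}$, i.e.\ to showing that $\mathbb{T}(0)$ is the identity operator on $\mathbb{V}(\infty)$.

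To prove $\mathbb{T}(0)=\mathrm{id}$ I would inspect the single-row lattice defining $\mathbb{T}(x)$, whose leftmost horizontal edge carries the label $0$. Whenever the left edge of a vertex carries $b=0$, only the term $p=0$ contributes in \eqref{weights:spinL_uv}, and the weight collapses to the polynomial
\[
\mathbb{W}_{x;(u,v)}(a,0,c,d)=\delta_{a=c+d}\,x^{d}\,(ux;q)_{c}\,\binom{a}{c}_{q},
\]
whose value at $x=0$ is $\delta_{a,c}\,\delta_{d,0}$. Hence at $x=0$ the leftmost vertex must send its bottom label straight up and emit the label $0$ on its right edge; applying this inductively from left to right along the row forces every horizontal edge to carry $0$ and every top label to equal the corresponding bottom label. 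Thus the only configuration contributing to $\bra{\lambda'}\mathbb{T}(0)\ket{\nu'}$ is the one with top boundary equal to the bottom boundary, and it has weight $1$; therefore $\mathbb{T}(0)\ket{\nu'}=\ket{\nu'}$ and $\bra{\lambda'}\mathbb{T}(0)\ket{\nu'}=\delta_{\lambda',\nu'}=\delta_{\lambda,\nu}$, which is the desired identity.

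The one genuine point of care---which I regard as the only obstacle---is that the general weight $\mathbb{W}_{x;(u,v)}(a,b,c,d)$ contains the factors $(v/x;q)_{p}$ and $(x/v)^{p-b}$, which are singular as $x\to0$ when $b>0$, so one cannot naively substitute $x=0$ into an arbitrary vertex weight. This is circumvented exactly as above: along a transfer-matrix row every vertex has left label $b=0$, and for $b=0$ the weight is a polynomial in $x$, regular at $x=0$. (Alternatively one first checks that $\GG^{\buv}_{\lambda/\mu}(x_{1},\dots,x_{n+1})$ is a genuine polynomial: in the unique configuration of the single-row transfer matrix with left boundary $0$, the lowest power of $x$ occurring in the $j$-th vertex weight is $h_{j}-h_{j-1}$ with $h_{j}\ge0$ the label of the $j$-th horizontal edge, so the row weight contains no negative powers of $x$; evaluation at $x_{n+1}=0$ is then unambiguous.) Apart from this, the argument is word-for-word the one given for $\JJ^{\buv}_{\lambda/\mu}$.
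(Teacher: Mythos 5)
Your proof is correct and follows essentially the same route as the paper's: insert the branching decomposition, reduce to $\bra{\lambda'}\mathbb{T}(0)\ket{\nu'}=\delta_{\lambda,\nu}$, and run the left-to-right induction using the fact that a vertex with left label $0$ has weight $x^{d}(ux;q)_{c}\binom{a}{c}_{q}$, forcing $d=0$ at $x=0$. Your extra remark about the apparent singularity of the $b>0$ weights as $x\to 0$, and the telescoping bound showing the row weight is a genuine polynomial, addresses a point the paper passes over silently and is a welcome (though not strictly different) refinement.
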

\begin{proof}
From Definition~\ref{def:spinL_uv_poly} of 
\(
\mathfrak{G}^{\buv}_{\lambda/\mu}(x_{1},\dots,x_{n},x_{n+1}),
\)
we obtain
\begin{equation}
\GG^{\buv}_{\lambda/\mu}(x_{1},\dots,x_{n},x_{n+1})
   \,=\, \bra{\lambda}\, \mathrm{T}(x_{n+1}) \cdots \mathrm{T}(x_{1})\, \ket{\mu} \,
   =\, \sum_{\nu \prec \lambda} 
      \bra{\lambda}\, \mathbb{T}(x_{n+1}) \ket{\nu}\,
      \bra{\nu}\, \mathbb{T}(x_{n}) \cdots \mathbb{T}(x_{1})\, \ket{\mu}.
\end{equation}
The proposition follows once we show that
\[
\bra{\lambda}\mathbb{T}(0)\ket{\nu} \;=\; \delta_{\lambda,\nu}.
\]

This identity is immediate from the graphical interpretation of the transfer matrix.  
At \(x=0\), the weight of the vertex
\[
\tikz{0.5}{
\draw[lgray,line width=4pt,->] (-1,0) -- (1,0);
\draw[lgray,line width=4pt,->] (0,-1) -- (0,1);
\node[left] at (-1,0) {\tiny $0$};
\node[right] at (1,0) {\tiny $d$};
\node[below] at (0,-1) {\tiny $a$};
\node[above] at (0,1) {\tiny $c$};
}\,=\, \,\delta_{a = c + d} \,\, x^{d}\,
\,{(ux;q)_{c}\,}\,\binom{c+d}{c}_{q}\,
\]
vanishes whenever $d>0$. Since the leftmost vertex in the graphical representation of $\mathbb{T}$ is of this form, its weight is zero unless \(d=0\).  
Repeating the same argument at each subsequent vertex shows that the only admissible configuration is the one in which all particles entering from the bottom travel straight upward without turning to the right, and the weight of this configuration at \(x=0\) is equal to \(1\).
\end{proof}

\begin{prop}\label{prop:spinL_mixed_RLL_relation}
The weights given in~\eqref{weights:spinL_uv} together with~\eqref{weights:spin1_uv_generic_reversed} satisfy the \emph{YBE} relation. That is, for any fixed boundary, we have the following equality of partition functions:
\begin{align}\label{eq:YBE_pictorial_spinL_mixed}
\sum_{c_1,c_2,c_{3}}
\tikz{0.9}{
\draw[lred,line width=1.5pt,<-]
(-2,1) node[above,scale=0.6] {\color{black} $a_2$} -- (-1,0) node[below,scale=0.6] {\color{black} $c_2$} -- (1,0) node[right,scale=0.6] {\color{black} $b_2$};
\draw[lgray,line width=4pt,->] 
(-2,0) node[below,scale=0.6] {\color{black} $a_1$} -- (-1,1) node[above,scale=0.6] {\color{black} $c_1$} -- (1,1) node[right,scale=0.6] {\color{black} $b_1$};
\draw[lgray,line width=4pt,->] 
(0,-1) node[below,scale=0.6] {\color{black} $a_{3}$} -- (0,0.5) node[scale=0.6] {\color{black} $c_{3}$} -- (0,2) node[above,scale=0.6] {\color{black} $b_{3}$};
\node[left] at (-2.2,1) {$\ss (x,\l) \rightarrow $};
\node[left] at (-2.2,0) {$\ss (y,\m) \rightarrow$};
\draw[->](0,-2) node[below] {$\ss (z,\n)$}--(0,-1.7);
}
\quad
=
\quad
\sum_{c_1,c_2,c_{3}}
\tikz{0.9}{
\draw[lred,line width=1.5pt,<-] 
(-1,1) node[left,scale=0.6] {\color{black} $a_2$} -- (1,1) node[above,scale=0.6] {\color{black} $c_2$} -- (2,0) node[below,scale=0.6] {\color{black} $b_2$};
\draw[lgray,line width=4pt,->] 
(-1,0) node[left,scale=0.6] {\color{black} $a_1$} -- (1,0) node[below,scale=0.6] {\color{black} $c_1$} -- (2,1) node[above,scale=0.6] {\color{black} $b_1$};
\draw[lgray,line width=4pt,->] 
(0,-1) node[below,scale=0.6] {\color{black} $a_{3}$} -- (0,0.5) node[scale=0.6] {\color{black} $c_{3}$} -- (0,2) node[above,scale=0.6] {\color{black} $b_{3}$};
\node[left] at (-1.5,1) {$\ss (x,\l) \rightarrow$};
\node[left] at (-1.5,0) {$\ss  (y ,\m)\rightarrow$};
\draw[->](0,-2) node[below] {$\ss (z,\n)$}--(0,-1.7);
}
\end{align}
\end{prop}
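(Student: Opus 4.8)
\emph{The plan.} I would prove this the same way Propositions~\ref{prop:spinL_RLL_relation} and~\ref{prop:dua_RLL_relation_spin1_uv} are proved: by specialising the master Yang--Baxter equation~\eqref{eq:YBE}, now performing the \emph{fusion} limit on one auxiliary line and the \emph{spin-$1$ dualisation} on the other. Both new vertex families arise from the generic weights~\eqref{generalweights} by first trading the spin of the vertical line for $(u,v)$ through $q^{-\n}=uv$, $z=v$, and then, on the horizontal line, either fusing --- $x\mapsto x q^{-\l}$, normalising by the relevant power of $u$, then $q^{-\l}\to 0$, giving $\mathbb{W}_{x;(u,v)}$ of~\eqref{weights:spinL_uv} --- or dualising --- $x\mapsto -x$, normalising by $u$ on occupied right edges, dividing by $(x+v)/(1+ux)$, inverting $x\mapsto x^{-1}$, and reversing the line while complementing its $\{0,1\}$-labels, giving $\mathrm{W}^{*}_{x;(u,v)}$ of~\eqref{weights:spin1_uv_generic_reversed}.

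So the idea is to run both specialisations simultaneously in~\eqref{eq:YBE}: let the thin reversed line, the thick line, and the vertical line of~\eqref{eq:YBE_pictorial_spinL_mixed} carry spins $1$, $\m$, $\n$, substitute $q^{-\n}=uv$, $z=v$; apply the fusion procedure to the thick ($\m$-)line, turning its crossings with the vertical line into $\mathbb{W}_{y;(u,v)}$; apply the dualisation procedure to the thin line, turning its crossings with the vertical line into $\mathrm{W}^{*}_{x;(u,v)}$; and note that the remaining (thin)$\times$(thick) crossing, to which both procedures apply, becomes precisely the crossing weight drawn in~\eqref{eq:YBE_pictorial_spinL_mixed} --- it is again a $\mathrm{W}^{*}$-type weight at a specialised value of the parameters, the thick line now playing the role of the quantum line, just as $\mathbb{R}_{x/y}$ in~\eqref{weights:spinL_Rmatrix} is a specialisation of $\mathbb{W}$ (the Remark following~\eqref{weights:spinL_Rmatrix}). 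Every operation used here preserves~\eqref{eq:YBE}: inverting a spectral parameter is a relabelling, reversing a $\{0,1\}$-line and complementing its labels is a bijection between the configurations summed on the two sides, and multiplying each vertex of a line by a scalar rescales the two sides equally once the accumulated scalar is a function of that line's fixed boundary labels only. Collecting the pieces gives~\eqref{eq:YBE_pictorial_spinL_mixed}.

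\emph{The main obstacle} is small and purely a matter of bookkeeping. Since the fusion substitution alone already yields a valid Yang--Baxter equation (Proposition~\ref{prop:spinL_RLL_relation}) and the dualisation substitution alone likewise (Proposition~\ref{prop:dua_RLL_relation_spin1_uv}), the only thing to check is that applying the two to \emph{different} auxiliary lines of~\eqref{eq:YBE} at the same time is consistent: the fusion normalisation acts on the thick line's data and the dualisation normalisation on the thin line's, so they interfere only at the common (thin)$\times$(thick) vertex, and there they touch disjoint slots of the weight. One must still verify, as in the proofs of Propositions~\ref{prop:RLL_relation_spin1_uv} and~\ref{prop:dua_RLL_relation_spin1_uv} (cf.\ the factor $u^{-b_{1}-b_{2}}$ and the uniform division by $(x+v)/(1+ux)$ there), that the stray scalar factors accumulated on each line depend only on the fixed boundary labels of that line, so that they cancel between the two sides; and that the fusion limit $q^{-\l}\to 0$ commutes with the finite internal sum over $p$ in~\eqref{generalweights}, which is clear. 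With these checks in place the identity follows.
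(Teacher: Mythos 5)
Your proposal is correct and follows essentially the same route as the paper: the paper's proof also specialises the master Yang--Baxter equation~\eqref{eq:YBE} with $\l=1$, dividing by $(x+v)/(1+ux)$, substituting $x\mapsto -x^{-1}$, $y\mapsto yq^{-\m}$, $z=v$, $q^{-\n}=uv$, complementing the labels on the $x$-line, and finally sending $q^{-\m}\to 0$ --- i.e.\ running the dualisation of Proposition~\ref{prop:dua_RLL_relation_spin1_uv} on the thin line and the degeneration of Proposition~\ref{prop:spinL_RLL_relation} on the thick line simultaneously, exactly as you describe. Your additional bookkeeping remarks (that the normalising scalars attach to a single line and cancel between the two sides, and that the limit commutes with the finite sum over $p$) are checks the paper leaves implicit.
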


\begin{proof}
We begin by dividing both sides of the Equation~\ref{eq:YBE} by \( \dfrac{x+v}{1+ux} \) and then make the substitutions \( x \mapsto -x^{-1} \), \( y \mapsto y q^{-\m} \), \( z = v \), and \( q^{-\n} = uv \), along with setting \( \l = 1 \). We then complement the labels on the horizontal edges along the line associated with the spectral parameter \( x \). Finally, we take $q^{-\m}$ to zero.
\end{proof}

\begin{prop}\label{prop:commutationrelation_dual_Cauchy}
 The following exchange relation holds:
 \begin{equation}\label{eq:commutation_relation_spin1_dual_Cauchy}
     \dfrac{1}{1+xy}\,\mathrm{T}^{*}(x)\,\mathbb{T}(y)\,=\,\mathbb{T}(y)\,\mathrm{T}^{*}(x).
 \end{equation}
\end{prop}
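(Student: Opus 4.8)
The plan is to replay the proof of Proposition~\ref{prop:commutationrelation_spin1_Cauchy}, with the dual spin-$1$/spin-$1$ Yang--Baxter equation of Proposition~\ref{prop:dua_RLL_relation_spin1_uv} replaced by the mixed Yang--Baxter equation of Proposition~\ref{prop:spinL_mixed_RLL_relation}, which intertwines the fused (thick) line of $\mathbb{T}(y)$ with the dual spin-$1$ (thin, reversed) line of $\mathrm{T}^{*}(x)$. It suffices to establish the identity on every matrix element $\bra{\lambda'}\,\cdot\,\ket{\mu'}$ for arbitrary partitions $\lambda,\mu$; and since the $q$-Whittaker weights~\eqref{weights:spinL_uv} are polynomials while $\ket{\mu'}$ and $\bra{\lambda'}$ are finitely supported, every sum that appears below is finite, so no convergence hypothesis is required.

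First I would form the two-row lattice whose partition function equals $\bra{\lambda'}\mathrm{T}^{*}(x)\mathbb{T}(y)\ket{\mu'}$: a dual row (thin, reversed, spectral parameter $x$) and a fused row (thick, spectral parameter $y$), stacked one above the other, with all left horizontal edges fixed to $0$ and both right horizontal edges free. I would then append, at the left boundary, the $R$-matrix ``cross'' of Proposition~\ref{prop:spinL_mixed_RLL_relation}. Because both horizontal edges entering from the far left are $0$, conservation at the cross forces its two internal rightward edges to carry a common value, and thinness of the dual line restricts that value to $\{0,1\}$. The value-$0$ term contributes the weight of the empty cross times $\bra{\lambda'}\mathrm{T}^{*}(x)\mathbb{T}(y)\ket{\mu'}$; exactly as in the proof of Proposition~\ref{prop:commutationrelation_spin1_Cauchy} --- using that the right boundary of the dual row is free together with the explicit dual weights~\eqref{dualweights:spin1_uv_weights_reversed} --- the value-$1$ (``occupied cross'') term must be shown to contribute $0$.

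Next, by iterating the mixed Yang--Baxter equation (the ``railroad'' move) I would slide the cross rightward through every vertical line of the lattice; this interchanges the two rows and rewrites the same partition function as the weight of the cross --- now sitting at the right boundary, with its outer edges free and summed over --- times $\bra{\lambda'}\mathbb{T}(y)\mathrm{T}^{*}(x)\ket{\mu'}$. Equating the two evaluations of the partition function and cancelling the (nonzero) cross weights yields the scalar relation, with the prefactor $\tfrac{1}{1+xy}$ emerging from bookkeeping those cross weights. Since $\lambda$ and $\mu$ were arbitrary, the operator identity follows.

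The step I expect to be the main obstacle is the vanishing of the ``occupied cross'' term: in Proposition~\ref{prop:commutationrelation_spin1_Cauchy} this was guaranteed by a convergence constraint, whereas here --- with no such hypothesis --- one must argue directly that every configuration in which a particle is emitted at the left end of the dual row and re-injected into the fused row through the cross is incompatible with the fixed $0$ far-left boundary and with the finiteness of the configuration, and hence has weight $0$.
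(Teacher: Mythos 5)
Your proposal follows the same route as the paper: attach the thin--thick cross of Proposition~\ref{prop:spinL_mixed_RLL_relation} at the left end of the two-row lattice, railroad it through with the mixed Yang--Baxter equation, and remove it at the free right boundary by sum-to-unity. Two remarks on where your write-up and the paper's proof diverge in emphasis. First, the ``bookkeeping'' you defer is in fact the entire explicit content of the paper's proof: since the far-left external edges are both $0$, the empty cross is (after un-complementing the reversed line) the $W_{1,\m}$ vertex of~\eqref{weights:spin1_row_weights} with thin-line labels $(1,1)$ and thick-line labels $(0,0)$, i.e.\ weight $\frac{q^{-\m}-xy^{-1}q^{-\m}}{1-xy^{-1}q^{-\m}}$, which under the substitutions $x\mapsto -x^{-1}$, $y\mapsto yq^{-\m}$ and the limit $q^{-\m}\to 0$ becomes exactly $\frac{1}{1+xy}$; the cross at the right disappears by stochasticity (Proposition~\ref{prop:sum_to_unity}). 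You should carry out this computation rather than leave it implicit, since it is what produces the stated prefactor.

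Second, the obstacle you flag --- the occupied-cross term --- is genuine: conservation alone does not kill it (the corresponding cross weight is the $(0,1,1,0)$ entry of $W_{1,\m}$, which degenerates to $\frac{xy}{1+xy}\neq 0$), and the paper's proof is in fact silent about it, evaluating only the empty-cross configuration. The intended mechanism is the same as in Proposition~\ref{prop:commutationrelation_spin1_Cauchy}: an occupied cross sits at the far-left end of the semi-infinite rows, so any such configuration must carry a particle along \emph{every} horizontal edge between the cross and the finite support of $\lambda',\mu'$; it is therefore not finitely supported, and its weight is an infinite product of per-column factors (of the form $v_j\,\tfrac{x+u_j}{1+v_jx}$ in the empty region). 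One either excludes such configurations by working with the finite truncation $N$ and letting $N\to\infty$ under a convergence condition of the same type as in Proposition~\ref{prop:commutationrelation_spin1_Cauchy}, or declares the partition function to be a sum over finitely supported configurations only. So your instinct is correct, but the vanishing is not a consequence of the fixed $0$ boundary and conservation alone; it requires either that convention or an explicit hypothesis, neither of which appears in the statement or proof as written.
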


\begin{proof}
The proof is exactly the same as~\ref{prop:commutationrelation_spin1_Cauchy}.

\begin{multline}
\tikz{0.7}{
\draw[lred,line width=1.5pt,<-] 
(-3,2) node[black,left] {$\ss 0$}-- (-2,1)-- (4,1) node[black,right] {$\ss *$};
\draw[lgray,line width=1.5pt,->] 
(-3,1) node[black,left] {$\ss 0$}-- (-2,2)-- (4,2) node[black,right] {$\ss *$};
\draw[lgray,line width=4pt,->] 
(3,0) -- (3,3) ;
\draw[lgray,line width=4pt,->] 
(2,0) -- (2,3);
\draw[lgray,line width=4pt,->] 
(1,0) -- (1,3);
\draw[lgray,line width=4pt,->] 
(0,0) -- (0,3);
\node[left] at (-4,1) {$y \rightarrow$};
\node[left] at (-4,2) {$x \rightarrow$};
\node at (-1,0.5) {$\cdots$};
\node at (-1,1.5) {$\cdots$};
\node at (-1,2.5) {$\cdots$};
}\,
=
\,
\tikz{0.7}{
\draw[lgray,line width=1.5pt,->] 
(-2,1) node[black,left] {$\ss 0$}-- (4,1)--(5,2) node[black,right] {$\ss *$};
\draw[lred,line width=1.5pt,<-] 
(-2,2) node[black,left] {$\ss 0$}-- (4,2)  --(5,1)node[black,right] {$\ss *$};
\draw[lgray,line width=4pt,->] 
(3,0) -- (3,3) ;
\draw[lgray,line width=4pt,->] 
(2,0) -- (2,3);
\draw[lgray,line width=4pt,->] 
(1,0) -- (1,3);
\draw[lgray,line width=4pt,->] 
(0,0) -- (0,3);
\node[left] at (-3,1) {$y \rightarrow$};
\node[left] at (-3,2) {$x \rightarrow$};
\node at (-1,0.5) {$\cdots$};
\node at (-1,1.5) {$\cdots$};
\node at (-1,2.5) {$\cdots$};
} 
\end{multline}

Then we just need to the consider the entry of the cross on the left hand side. We then note that the red line indicates the swapping $0\leftrightarrow 1$, then from~\ref{weights:spin1_row_weights}, we get that the weight of the cross is equal to the vertex:
\[
\tikz{0.7}{
\draw[lgray,line width=4pt,->] (0,0)node[black,left] {\tiny$ 0$} -- (2,2) node[black,right] {\tiny $ 0$};
\draw[lred,line width=1pt,<-] (0,2) node[black,left] {\tiny $ 0$} -- (2,0) node[black,right] {\tiny $0$};
}\,=\,
\tikz{0.7}{
\draw[lgray,line width=1pt,->] (-1,0) -- (1,0);
\draw[lgray,line width=4pt,->] (0,-1) -- (0,1);
\node[left] at (-1,0) {\tiny $1$};\node[right] at (1,0) {\tiny $1$};
\node[below] at (0,-1) {\tiny $0$};\node[above] at (0,1) {\tiny $0$};
}=\dfrac{(q^{-\m}-x y^{-1} q^{-\m})}{1-x y^{-1} q^{-\m}}
\]

We then set $x\mapsto -x^{-1}$, $y \mapsto yq^{-\m}$, and then take $q^{-\m}\mapsto 0$, to get:

\[
\tikz{0.7}{
\draw[lgray,line width=1pt,->] (-1,0) -- (1,0);
\draw[lgray,line width=4pt,->] (0,-1) -- (0,1);
\node[left] at (-1,0) {\tiny $1$};\node[right] at (1,0) {\tiny $1$};
\node[below] at (0,-1) {\tiny $0$};\node[above] at (0,1) {\tiny $0$};
}=\dfrac{(0+x^{-1} y^{-1})}{1+x^{-1} y^{-1} }=\dfrac{1}{1+xy}
\]
\end{proof}

\begin{thm}\label{thm:cauchy_dual}
For two positive numbers $n$ and $m$, and let $\lambda$ and $\mu$ be two partitions. Then the family of Hall--Littlewood polynomials and the family of $q$-Whittaker polynomials satisfy the following summation identity:
\begin{multline}\label{eq:cauchy_dual}
\,\sum_{\kappa}\,\dfrac{c_{\kappa^{'}}}{c_{\mu^{'}}}\,\GG^{\buv}_{\kappa/\lambda}(x_{1},\dots,x_{n})\, \JJ^{(\mathbf{v},\mathbf{u})}_{\kappa'/\mu'}(y_{1},\dots,y_{m})\,   \,\\
=\,\prod^{m}_{j=1}\prod^{n}_{i=1}\,{(1+x_{i}y_{j})}\,\sum_{\kappa}\,\dfrac{c_{\lambda^{'}}}{c_{\kappa^{'}}}\,\GG^{\buv}_{\mu/\kappa}(x_{1},\dots,x_{n})\,\JJ^{(\mathbf{v},\mathbf{u})}_{\lambda'/\kappa'}(y_{1},\dots,y_{m})
\end{multline}
\end{thm}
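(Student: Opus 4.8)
The plan is to follow the proof of Theorem~\ref{thm:cauchy_spin1} verbatim, with the fused transfer matrix $\mathbb{T}$ playing the role of one of the two Hall--Littlewood transfer matrices and with the mixed exchange relation of Proposition~\ref{prop:commutationrelation_dual_Cauchy} replacing the one used there. Concretely, I would set
\[
\mathcal{Z}_{\lambda,\mu}(x_{1},\dots,x_{n};y_{1},\dots,y_{m})
:=\bra{\mu'}\,\mathrm{T}^{*}(y_{1})\cdots\mathrm{T}^{*}(y_{m})\,\mathbb{T}(x_{n})\cdots\mathbb{T}(x_{1})\,\ket{\lambda'},
\]
a matrix element on $\mathbb{V}(\infty)$ of $m$ dual spin-$1$ rows stacked with $n$ fused rows. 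Both $\mathrm{T}^{*}$ and $\mathbb{T}$ act on the same space, and the relevant states are those built from the column multiplicities $m_{i}(\nu')$, so every object in~\eqref{eq:cauchy_dual} appears naturally in this single model. The proof then amounts to computing $\mathcal{Z}_{\lambda,\mu}$ in two ways and equating.

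For the first computation I would insert $\sum_{\kappa}\ket{\kappa'}\bra{\kappa'}=\mathrm{Id}$ between the block of $\mathrm{T}^{*}$'s and the block of $\mathbb{T}$'s, the sum over all partitions $\kappa$. The right-hand factor is $\bra{\kappa'}\mathbb{T}(x_{n})\cdots\mathbb{T}(x_{1})\ket{\lambda'}=\GG^{\buv}_{\kappa/\lambda}(x_{1},\dots,x_{n})$ by Definition~\ref{def:spinL_uv_poly}, which vanishes unless $\lambda\subseteq\kappa$; the left-hand factor is $\bra{\mu'}\mathrm{T}^{*}(y_{1})\cdots\mathrm{T}^{*}(y_{m})\ket{\kappa'}=\frac{c_{\kappa'}}{c_{\mu'}}\JJ^{(\mathbf{v},\mathbf{u})}_{\kappa'/\mu'}(y_{1},\dots,y_{m})$ by Proposition~\ref{prop:dualweights_HL} applied to the conjugate pair $\mu'\subseteq\kappa'$ (i.e.\ $\mu\subseteq\kappa$), which is exactly where the constants $c_{\kappa'}/c_{\mu'}$ enter. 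Summing the product over $\kappa$ reproduces the left-hand side of~\eqref{eq:cauchy_dual}, with the summation automatically restricted to $\kappa\supseteq\lambda,\mu$.

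For the second computation I would move each $\mathbb{T}(x_{i})$ to the left through all the $\mathrm{T}^{*}(y_{j})$, using Proposition~\ref{prop:commutationrelation_dual_Cauchy} in the form $\mathrm{T}^{*}(x)\,\mathbb{T}(y)=(1+xy)\,\mathbb{T}(y)\,\mathrm{T}^{*}(x)$; carrying this out for all $nm$ pairs produces the scalar $\prod_{i=1}^{n}\prod_{j=1}^{m}(1+x_{i}y_{j})$ and rewrites $\mathcal{Z}_{\lambda,\mu}$ as $\prod_{i,j}(1+x_{i}y_{j})\,\bra{\mu'}\mathbb{T}(x_{n})\cdots\mathbb{T}(x_{1})\,\mathrm{T}^{*}(y_{1})\cdots\mathrm{T}^{*}(y_{m})\,\ket{\lambda'}$. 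Inserting $\sum_{\kappa}\ket{\kappa'}\bra{\kappa'}$ once more, the left factor becomes $\GG^{\buv}_{\mu/\kappa}(x_{1},\dots,x_{n})$ (nonzero only for $\kappa\subseteq\mu$) and the right factor becomes $\frac{c_{\lambda'}}{c_{\kappa'}}\JJ^{(\mathbf{v},\mathbf{u})}_{\lambda'/\kappa'}(y_{1},\dots,y_{m})$ (nonzero only for $\kappa\subseteq\lambda$, again by Proposition~\ref{prop:dualweights_HL}); this is precisely the right-hand side of~\eqref{eq:cauchy_dual}. Equating the two evaluations of $\mathcal{Z}_{\lambda,\mu}$ completes the proof.

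The algebra here is negligible; the points requiring care are: (i) justifying that $\sum_{\kappa}\ket{\kappa'}\bra{\kappa'}$ may be inserted and that the resulting sums over $\kappa$ are meaningful — one checks, from conservation of particle number across the lattice together with the fixed $0$ left boundaries, that for fixed $n,m$ only finitely many $\kappa$ contribute, or else one imposes the convergence hypothesis already present in Proposition~\ref{prop:commutationrelation_dual_Cauchy}; (ii) confirming that Proposition~\ref{prop:dualweights_HL}, phrased in terms of row multiplicities $m_{i}(\cdot)$, applies verbatim to the conjugate partitions, so that $c_{\kappa'},c_{\mu'},c_{\lambda'}$ are the constants written in~\eqref{eq:cauchy_dual}; and (iii) bookkeeping the support conditions so the left sum runs over partitions containing both $\lambda$ and $\mu$ and the right sum over partitions contained in both. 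I expect (i), and in particular pinning down whether a convergence constraint is genuinely needed or the sums simply truncate, to be the only real obstacle.
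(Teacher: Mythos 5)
Your proposal is correct and is essentially the paper's own argument: the authors prove Theorem~\ref{thm:cauchy_dual} by repeating the two-fold evaluation of a mixed matrix element exactly as in Theorem~\ref{thm:cauchy_spin1}, with the exchange relation of Proposition~\ref{prop:commutationrelation_dual_Cauchy} supplying the factor $\prod_{i,j}(1+x_iy_j)$ and Proposition~\ref{prop:dualweights_HL} (applied to conjugate partitions) supplying the constants $c_{\kappa'}/c_{\mu'}$ and $c_{\lambda'}/c_{\kappa'}$, just as you describe. Your remarks on finiteness of the intermediate sums and on the support conditions are consistent with the paper's treatment.
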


\begin{proof}
The proof proceeds analogously to that of Theorem~\ref{thm:cauchy_spin1}, with the commutation relation~\ref{eq:commutation_relation_spin1_dual_Cauchy} replacing the one used there. 
\end{proof}

\subsection{Dual Weights}

We define our \emph{dual vertex weights} $\mathbb{W}^{*}_{x;(u,v)}(a,b,c,d)$ along the equation~\ref{eq:weights_and_dualweights}:

\begin{equation}\label{weights:spinL_dual_weights}
\mathbb{W}^{*}_{x;(u,v)}(a,b,c,d) \equiv \mathbb{W}^{*}_{x;(u,v)}\left(\tikz{0.9}{
\draw[lred,line width=4pt,<-] (-1,0) -- (1,0);
\draw[lgray,line width=4pt,->] (0,-1) -- (0,1);
\node[left] at (-1,0) {\tiny $b$};\node[right] at (1,0) {\tiny $d$};
\node[below] at (0,-1) {\tiny $a$};\node[above] at (0,1) {\tiny $c$};
\node[right] at (1.7,0) {$\leftarrow x $};
\node[below] at (0,-1.4) {$\uparrow$};
\node[below] at (0,-1.9) {$(u,v)$};
}\right)\,:=\,\,\dfrac{(uv;q)_{c}}{(uv;q)_{a}} \,\dfrac{(q;q)_{a}}{(q;q)_{c}}\, \mathbb{W}_{x,(v,u)}(c,b,a,d)\\,
\end{equation}

Observe that the dual vertex weights $\mathbb{W}^{*}_{x;(u,v)}$ are expressed in terms of the original vertex weights $\mathbb{W}_{x,(v,u)}$, with the parameters $u$ and $v$ interchanged.

\begin{defn}
We introduce a $\mathbb{T}^{*}(x;N)\in \text{End}(\mathbb{V}(N))$ as:
\begin{multline}\label{def:transfermatrix_spinL_dual}
\mathbb{T}^{*}(x;N):\, \ket{i_1}\otimes \ket{i_2}\otimes {\cdots}\ket{i_N}
\mapsto\\
\sum_{k_1,k_2,\dots,k_{N} \geq 0} 
\left(
\tikz{1}{\draw[lred,line width=4pt,<-] 
(-2,1) node[black,left] {$\ss 0$}-- (4,1) node[black,right] {$*$};
\draw[lgray,line width=4pt,->] 
(3,0) node[below,scale=0.6] {\color{black} $i_{1}$} -- (3,2) node[above,scale=0.6] {\color{black} $k_{1}$};
\draw[lgray,line width=4pt,->] 
(2,0) node[below,scale=0.6] {\color{black} $i_{2}$} -- (2,2) node[above,scale=0.6] {\color{black} $k_{2}$};
\draw[lgray,line width=4pt,->] 
(1,0) -- (1,2);
\draw[lgray,line width=4pt,->] 
(0,0)  -- (0,2);
\draw[lgray,line width=4pt,->] 
(-1,0) node[below,scale=0.6] {\color{black} $i_{N}$} -- (-1,2) node[above,scale=0.6] {\color{black} $k_{N}$};
\node[left] at (-3,1) {$x \rightarrow$};
\node at (0.5,-0.2) {$\cdots$};
\node at (0.5,2.2) {$\cdots$};
\draw[->](3,-1.3) node[below] {$\ss (u_{1},v_{1})$}--(3,-1);
\draw[->](2,-1.3) node[below] {$\ss (u_{2},v_{2})$}--(2,-1);
\draw (0.5,-1.3) node[below] {$\dots $};
}\right)\,\ket{k_1}\otimes \ket{k_2}\otimes {\cdots}\otimes \ket{k}_{N}.
\end{multline}
\end{defn}

Let us briefly describe the boundary conditions of the single-row lattice above. The top, bottom, and left boundaries are fixed. The right boundary is again labelled by \( * \), indicating that it is \emph{free}, any number of particles may enter.  Given the boundary conditions and the conservation condition at each vertex, we can conclude that there is a unique configuration. Then the weight of this single-row lattice is simply the product of the weights of the vertices. Similar to $\mathbb{T}(x)$, we write

\[
\mathbb{T}^{*}(x)\,=\,\mathbb{T}^{*}(x;\infty)
\]

to denote the corresponding lift of the operator~\ref{def:transfermatrix_spinL_dual}.

\begin{prop}\label{prop:dualweights_qWhittaker}
Let $\lambda$ be a partition, and define 
\[
c_{\lambda} = \prod_{i=1}^{\ell(\lambda)} \dfrac{(uv;q)_{m_{i}(\lambda)}}{(q;q)_{m_{i}(\lambda)}}
\]
where $\ell(\lambda)$ denotes the length of the partition $\lambda$, and $m_i(\lambda)$ is number of rows of size $i$ in the Young diagram of $\lambda$. Then for any two partitions $\mu\subseteq\lambda$, we have
\begin{equation}
\bra{\mu'}\,\TT(x_1) \cdots \TT(x_n) \ket{\lambda'} 
= \dfrac{c_{\lambda^{'}}}{c_{\mu^{'}}} \, \mathfrak{G}_{\lambda/\mu}^{(\mathbf{v}, \mathbf{u})}(x_1, \dots, x_n).
\end{equation}
\end{prop}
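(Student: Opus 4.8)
The plan is to reduce the statement to Proposition~\ref{prop:dualweights_HL} (the analogous identity for the spin-1 transfer matrices $\tt$) by working vertex-by-vertex. The key observation is the defining relation~\eqref{weights:spinL_dual_weights}, namely
\[
\mathbb{W}^{*}_{x;(u,v)}(a,b,c,d)
=\frac{(uv;q)_{c}}{(uv;q)_{a}}\,\frac{(q;q)_{a}}{(q;q)_{c}}\,\mathbb{W}_{x;(v,u)}(c,b,a,d),
\]
which expresses each dual weight as a gauge transformation of an ordinary weight with $u$ and $v$ interchanged and with the top/bottom vertical labels swapped. The first step is therefore to write out the partition function $\bra{\mu'}\,\TT(x_1)\cdots\TT(x_n)\ket{\lambda'}$ explicitly as a sum over configurations of the $n$-row dual lattice with the dual weights~\eqref{weights:spinL_dual_weights}, reading the rows from $x_1$ (bottom) to $x_n$ (top), with bottom boundary $\ket{\mu'}$ and top boundary $\bra{\lambda'}$, and horizontal edges reversed (particles entering from the right).

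The second step is to substitute the gauge relation into every vertex. In each column $i$ the vertical edge labels form a sequence, say $m_i(\mu')=a^{(0)}_i,a^{(1)}_i,\dots,a^{(n)}_i=m_i(\lambda')$ as one moves up through the $n$ rows; replacing each dual weight by the corresponding ordinary $\mathbb{W}_{x;(v,u)}$ weight contributes, per vertex, a factor $\dfrac{(uv;q)_{a^{(k)}_i}}{(q;q)_{a^{(k)}_i}}\cdot\dfrac{(q;q)_{a^{(k-1)}_i}}{(uv;q)_{a^{(k-1)}_i}}$ (after the top/bottom swap, the ``$c$'' of the dual vertex becomes the incoming bottom label of the rewritten vertex). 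These factors telescope along each column, leaving only the contributions from $a^{(0)}_i=m_i(\mu')$ and $a^{(n)}_i=m_i(\lambda')$; taking the product over all columns $i$ collapses the total gauge factor to exactly
\[
\prod_{i}\frac{(uv;q)_{m_i(\lambda')}}{(q;q)_{m_i(\lambda')}}\cdot\frac{(q;q)_{m_i(\mu')}}{(uv;q)_{m_i(\mu')}}
=\frac{c_{\lambda'}}{c_{\mu'}},
\]
using the definition of $c_{\lambda}$ in terms of the multiplicities $m_i(\lambda')$. Simultaneously, the top/bottom swap in each vertex converts the dual-lattice configuration into an ordinary-lattice configuration with the roles of the $\ket{\mu'}$ and $\bra{\lambda'}$ boundaries exchanged, so that the residual sum over configurations is precisely the partition function $\bra{\lambda'}\,\mathbb{T}(x_n)\cdots\mathbb{T}(x_1)\,\ket{\mu'}$ computed with the weights $\mathbb{W}_{x;(v,u)}$, i.e.\ $\mathfrak{G}^{(\mathbf{v},\mathbf{u})}_{\lambda/\mu}(x_1,\dots,x_n)$ by Definition~\ref{def:spinL_uv_poly}.

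The main obstacle — really the only point requiring care — is bookkeeping the reversal of the horizontal edges and the top/bottom swap simultaneously, to make sure the rewritten configuration sum is genuinely $\bra{\lambda'}\mathbb{T}(x_n)\cdots\mathbb{T}(x_1)\ket{\mu'}$ and not some reflected variant with the wrong ordering of spectral parameters or swapped boundary data. Here I would mimic the reasoning already used for the spin-1 case in Proposition~\ref{prop:dualweights_HL}: the pictorial identification in the paragraph following~\eqref{dualweights:spin1_uv_weights_reversed} (reflect across the horizontal axis, interchange $u\leftrightarrow v$) is exactly the graphical content of~\eqref{weights:spinL_dual_weights}, and reflecting the whole $n$-row lattice across its horizontal axis turns the ``$\mathbb{T}^{*}$ stacked bottom-up'' picture into the ``$\mathbb{T}$ stacked bottom-up'' picture with $\lambda'$ and $\mu'$ boundaries interchanged, which is the right-hand side. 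Once this identification of configuration sums is in place, the telescoping computation above gives the claimed scalar $c_{\lambda'}/c_{\mu'}$, completing the proof.
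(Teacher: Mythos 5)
Your proposal is correct and follows essentially the same route as the paper, whose proof is simply the one-line observation that the claim is immediate from the definition of $\GG^{\buv}_{\lambda/\mu}$ as a partition function together with the gauge relation~\eqref{weights:spinL_dual_weights}; your vertex-by-vertex telescoping of the factors $\frac{(uv;q)_{c}(q;q)_{a}}{(uv;q)_{a}(q;q)_{c}}$ along each column is exactly the computation the authors leave implicit. The only cosmetic slips are writing $\ket{\mu'}$ for the bottom boundary where the paper's convention makes it $\bra{\mu'}$, and suppressing the column dependence of $(u_i,v_i)$ in the gauge factor (which the paper's statement of $c_\lambda$ in this proposition also does).
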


\begin{proof}
The proof follows immediately from the Definition~\ref{def:spin1_uv_HL} and the relation~\ref{eq:weights_and_dualweights}. 
\end{proof}

\begin{prop}\label{prop:RLL_spinL_vs_SpinL_}
The transfer matrices $\TT$ and $\T$ satisfy the following commutation relation:
\begin{equation}\label{eq:communtation_T_and_TT}
\TT(x)\,\T(y)\,=\,\dfrac{1}{(x y;q)_{\infty}}\,\T(y)\,\TT(x)  
\end{equation}
whenever
\[
|x|,|y|<\,1.
\]
\end{prop}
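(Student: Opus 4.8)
The plan is to run the scheme of the proofs of Propositions~\ref{prop:commutationrelation_spin1_Cauchy} and~\ref{prop:commutationrelation_dual_Cauchy}, now with \emph{both} auxiliary lines of the thick ($q$-Whittaker) type. The one new input needed is an $RLL$ form of the Yang--Baxter equation relating the weights $\mathbb{W}_{x;(u,v)}$ of~\eqref{weights:spinL_uv} to the dual weights $\mathbb{W}^{*}_{x;(u,v)}$ of~\eqref{weights:spinL_dual_weights}, intertwined by a ``reversed-thick by thick'' $R$-matrix $\mathbb{R}^{*}_{x/y}$ --- the thick--thick counterpart of Proposition~\ref{prop:spinL_mixed_RLL_relation}. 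I would establish this first, starting from the three-line equation~\eqref{eq:YBE} of Theorem~\ref{theorem:weightsoftheYBE}: divide through by $(x+v)/(1+ux)$, substitute $x\mapsto -x^{-1}$, $y\mapsto yq^{-\m}$, $z=v$, $q^{-\n}=uv$, send both $q^{-\l}$ and $q^{-\m}$ to $0$ so that the $x$- and $y$-lines become thick, and use the gauge identity~\eqref{weights:spinL_dual_weights} to recognise the $x$-line as a dual line; alternatively one can get this relation by reflecting the Yang--Baxter hexagon of Proposition~\ref{prop:spinL_RLL_relation} through the horizontal axis and checking that the diagonal gauge factors $(q;q)_{m}/(uv;q)_{m}$ cancel across the equation. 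In the process I would record the weights of $\mathbb{R}^{*}_{x/y}$ explicitly, together with its vacuum weight and the marginal sums needed below.

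With this relation in hand, the rest mirrors the earlier proofs. It suffices to check the identity on matrix elements, so fix partitions $\lambda,\mu$ and view $\bra{\lambda'}\TT(x)\T(y)\ket{\mu'}$ as a lattice with two horizontal rows --- the reversed-thick row carrying $x$ above the thick row carrying $y$ --- crossing the infinitely many vertical quantum lines, with top boundary $\bra{\lambda'}$, bottom boundary $\ket{\mu'}$, left boundary $\ket{0}$ on each row, and free ($\ast$) right boundary. I would attach the crossing $\mathbb{R}^{*}_{x/y}$ at the far-left end of the two rows. Because both rows enter there in the vacuum state, conservation forces $\mathbb{R}^{*}_{x/y}$ to act diagonally on that boundary state, and the hypothesis $|x|,|y|<1$ ensures that the configurations in which the crossing would feed particles into the lattice contribute an absolutely convergent series that vanishes. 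Then I would push the crossing rightward through all the vertical lines by repeated use of the new $RLL$ relation; this exchanges the vertical order of the two rows, leaving a two-row lattice for $\T(y)\TT(x)$ with the crossing now pressed against the free right boundary. Contracting the displaced crossing against that boundary --- equivalently, summing over the number $n$ of particles that run straight through it --- produces a scalar of the form $\sum_{n\ge 0}(xy)^{n}/(q;q)_{n}$, which by the $q$-binomial theorem equals $1/(xy;q)_{\infty}$, the series converging precisely because $|xy|<1$. Taking the left-end and right-end contributions of the crossing together gives $\bra{\lambda'}\TT(x)\T(y)\ket{\mu'}=\tfrac{1}{(xy;q)_{\infty}}\bra{\lambda'}\T(y)\TT(x)\ket{\mu'}$ for all $\lambda,\mu$, which is the claimed operator identity.

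The step I expect to be the main obstacle is the first one: pinning down the thick--thick dual $RLL$ relation. Sending the two spin parameters $q^{-\l}$ and $q^{-\m}$ to $0$ simultaneously inside~\eqref{eq:YBE}, while tracking the gauge factors from~\eqref{weights:spinL_dual_weights} that turn a line into its dual, is the delicate part: one must check that no weight degenerates to $0$ or $\infty$ in the double limit, and must extract $\mathbb{R}^{*}_{x/y}$ in closed enough form to carry out the $q$-binomial summation that produces $1/(xy;q)_{\infty}$. A secondary but necessary point is the analytic bookkeeping for the semi-infinite chain --- that the sums over partitions defining $\T(y)\ket{\mu'}$, $\TT(x)\T(y)\ket{\mu'}$, and the rearranged product all converge for $|x|,|y|<1$ --- so that the manipulations above are legitimate identities of operators on $\mathbb{V}(\infty)$.
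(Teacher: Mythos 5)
Your strategy --- the ``train argument'' with a cross attached at the left boundary, pushed through by an $RLL$ relation, and absorbed at the free right boundary --- is not the route the paper takes, and as written it contains a genuine gap that coincides exactly with the step you yourself flag as the main obstacle. Everything hinges on the existence of a reversed-thick by thick $R$-matrix $\mathbb{R}^{*}_{x/y}$ intertwining $\mathbb{W}^{*}$ with $\mathbb{W}$, and neither of your proposed derivations establishes it. The recipe of dividing~\eqref{eq:YBE} by $(x+v)/(1+ux)$, substituting $x\mapsto -x^{-1}$ and complementing labels transplants the spin-$1$ dualization procedure --- which relies on swapping $0\leftrightarrow 1$ on the $x$-line --- to a line whose labels are unbounded, where complementation is meaningless. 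The dual weights of~\eqref{weights:spinL_dual_weights} are instead defined by a gauge factor on the \emph{vertical} space together with $u\leftrightarrow v$ and $a\leftrightarrow c$, i.e.\ by a partial transposition of the $L$-operator; an $RLL$ relation for a partially transposed $L$-operator generally requires a different (crossing-related, not merely reflected) $R$-matrix, whose existence and nondegeneracy in the double limit $q^{-\l},q^{-\m}\to 0$ is precisely what must be proved. The same gap infects the boundary bookkeeping: the claim that contracting the displaced cross against the free right boundary yields $\sum_{n\ge 0}(xy)^{n}/(q;q)_{n}=1/(xy;q)_{\infty}$ \emph{independently of the states exiting the two rows} is a marginal-sum identity that can only be checked once $\mathbb{R}^{*}_{x/y}$ is written down; and the fact that the prefactor is an infinite product rather than a rational function already signals that no single local vacuum weight can supply it, so the whole burden falls on this unverified infinite summation.

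The paper's proof (Appendix~\ref{sec:appendix-proof}) sidesteps all of this and never constructs a thick--thick dual $R$-matrix. It instead \emph{fuses} the already-proven spin-$1$ relation~\eqref{eq:communtation_t_and_tt}: writing $\tt(x_{1})\cdots\tt(x_{I})\,\t(y_{1})\cdots\t(y_{J})=\prod_{i,j}\tfrac{1-qx_{i}y_{j}}{1-x_{i}y_{j}}\,\t(y_{1})\cdots\t(y_{J})\,\tt(x_{1})\cdots\tt(x_{I})$, specialising $x_{i}=xq^{i-1}$, $y_{j}=yq^{j-1}$ so that the scalar telescopes to $(xyq^{J};q)_{I}/(xy;q)_{I}$, rewriting this as a ratio of infinite Pochhammer symbols, and then applying the fusion procedure with $x\mapsto xq^{-I}$, $y\mapsto yq^{-J}$ and $q^{-I},q^{-J}\to 0$, so that the prefactor converges to $1/(xy;q)_{\infty}$. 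The infinite product thus comes for free from telescoping rather than from a boundary summation. To salvage your approach you would need to exhibit $\mathbb{R}^{*}_{x/y}$ in closed form and verify both the $RLL$ relation and the two marginal sums; absent that, the argument is incomplete at its core.
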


\begin{proof}
See Appendix~\ref{sec:appendix-proof}    
\end{proof}
\

\begin{thm}\label{thm:spinL_Cauchy_qwhittaker}
Fix two positive numbers $n$ and $m$, and let $\lambda$ and $\mu$ be two partitions. Then the family of $q$-Whittaker polynomials satisfy the following summation identity (assuming that all the variables are in the unit disc):
\begin{multline}\label{eq:spinL_Cauchy_qwhittaker}
\,\sum_{\kappa}\,\dfrac{c_{\kappa'}}{c_{\lambda'}}\,\GG^{(\mathbf{v},\mathbf{u})}_{\kappa/\lambda}(x_{1},\dots,x_{n})\, \GG^{\buv}_{\kappa/\mu}(y_{1},\dots,y_{m})\,   \\
\,=\,\prod^{n}_{j=1}\prod^{m}_{i=1}\,\dfrac{1}{(x_{i}y_{j};q)_{\infty}}\,\sum_{\kappa}\,\dfrac{c_{\mu'}}{c_{\kappa'}}\,\GG^{(\mathbf{v},\mathbf{u})}_{\mu/\kappa}(x_{1},\dots,x_{n})\,\GG^{\buv}_{\lambda/\kappa}(y_{1},\dots,y_{m}),
\end{multline}
with the left hand sum taken over all partitions that contain both $\mu$ and $\lambda$, and on the right hand side sum is taken over all partition that are contained in both $\lambda$ and $\mu$.
\end{thm}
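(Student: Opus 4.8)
The plan is to follow exactly the template of the proof of Theorem~\ref{thm:cauchy_spin1}, replacing the Hall--Littlewood transfer matrices $\t,\tt$ by the $q$-Whittaker transfer matrices $\T,\TT$ and using the exchange relation of Proposition~\ref{prop:RLL_spinL_vs_SpinL_} in place of Proposition~\ref{prop:commutationrelation_spin1_Cauchy}. Concretely, introduce the auxiliary partition function
\[
\mathcal{Z}_{\lambda,\mu}(x_1,\dots,x_n;y_1,\dots,y_m):=\bra{\lambda'}\,\TT(x_1)\cdots\TT(x_n)\,\T(y_1)\cdots\T(y_m)\,\ket{\mu'},
\]
which under the hypothesis $|x_i|,|y_j|<1$ is a convergent sum over lattice configurations, and evaluate it in two different ways.

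First I would insert the resolution of the identity $\sum_{\kappa}\ket{\kappa'}\bra{\kappa'}$ between the block of $\TT$'s and the block of $\T$'s. By Proposition~\ref{prop:dualweights_qWhittaker} the left factor equals $(c_{\kappa'}/c_{\lambda'})\,\GG^{\bvu}_{\kappa/\lambda}(x_1,\dots,x_n)$, which is nonzero only when $\lambda\subseteq\kappa$, and by Definition~\ref{def:spinL_uv_poly} the right factor equals $\GG^{\buv}_{\kappa/\mu}(y_1,\dots,y_m)$, nonzero only when $\mu\subseteq\kappa$. This produces the left-hand side of \eqref{eq:spinL_Cauchy_qwhittaker}, with the sum automatically restricted to partitions $\kappa$ containing both $\lambda$ and $\mu$. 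Next I would instead push every $\T(y_j)$ to the left past every $\TT(x_i)$ using the commutation relation \eqref{eq:communtation_T_and_TT}; each swap contributes a factor $(x_iy_j;q)_{\infty}^{-1}$, so after $nm$ swaps
\[
\mathcal{Z}_{\lambda,\mu}=\prod_{i=1}^{n}\prod_{j=1}^{m}\dfrac{1}{(x_iy_j;q)_{\infty}}\,\bra{\lambda'}\,\T(y_1)\cdots\T(y_m)\,\TT(x_1)\cdots\TT(x_n)\,\ket{\mu'}.
\]
Inserting $\sum_{\kappa}\ket{\kappa'}\bra{\kappa'}$ again and applying Definition~\ref{def:spinL_uv_poly} to the factor $\bra{\lambda'}\T(y_1)\cdots\T(y_m)\ket{\kappa'}=\GG^{\buv}_{\lambda/\kappa}(y_1,\dots,y_m)$ (nonzero only for $\kappa\subseteq\lambda$) and Proposition~\ref{prop:dualweights_qWhittaker} to $\bra{\kappa'}\TT(x_1)\cdots\TT(x_n)\ket{\mu'}=(c_{\mu'}/c_{\kappa'})\,\GG^{\bvu}_{\mu/\kappa}(x_1,\dots,x_n)$ (nonzero only for $\kappa\subseteq\mu$) yields the right-hand side of \eqref{eq:spinL_Cauchy_qwhittaker}, the sum now running over partitions $\kappa$ contained in both $\lambda$ and $\mu$. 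Equating the two evaluations of $\mathcal{Z}_{\lambda,\mu}$ finishes the proof, and the use of the commutativity of the $\T$'s among themselves and of the $\TT$'s among themselves (Theorem~\ref{thm:GG_symmetric}) makes all the orderings above immaterial.

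The algebraic content is thus essentially a copy of Theorem~\ref{thm:cauchy_spin1}; the one genuine point requiring care is analytic rather than combinatorial. Unlike the spin-$1$ case, where each transfer matrix acts by a finite sum, here $\T(x)$ and $\TT(x)$ involve infinitely many configurations because the horizontal boundary is free, so $\mathcal{Z}_{\lambda,\mu}$ and all the intermediate sums over $\kappa$ are honestly infinite. I would therefore devote the bulk of the write-up to checking that, for $|x_i|,|y_j|<1$, these series — together with the products $\prod_{i,j}(x_iy_j;q)_{\infty}^{-1}$ — converge absolutely, so that the insertions of the resolution of the identity and the $nm$ applications of \eqref{eq:communtation_T_and_TT} are all legitimate. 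This is precisely where the unit-disc hypothesis enters, and it is the only place where the present argument differs substantively from that of Theorem~\ref{thm:cauchy_spin1}.
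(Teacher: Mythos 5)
Your argument is exactly the paper's proof: the paper proves this theorem by running the proof of Theorem~\ref{thm:cauchy_spin1} verbatim with $\T,\TT$ in place of $\t,\tt$ and the exchange relation~\eqref{eq:communtation_T_and_TT} in place of~\eqref{eq:communtation_t_and_tt}, which is precisely your two evaluations of $\mathcal{Z}_{\lambda,\mu}$ via Proposition~\ref{prop:dualweights_qWhittaker} and Definition~\ref{def:spinL_uv_poly}. Your added attention to absolute convergence under the unit-disc hypothesis is a reasonable refinement that the paper leaves implicit, but it does not change the approach.
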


\begin{proof}
The proof is same as the proof of~\ref{thm:cauchy_spin1} but with the commutation relation~\ref{eq:communtation_T_and_TT}.
\end{proof}

\subsection{Degenerations}\label{subsec:qWhit-degenerations}
As in the case of $\JJ^{\buv}_{\lambda/\mu}$, we now turn to various degenerations of $\GG^{\buv}_{\lambda/\mu}$. The functions $\JJ^{\buv}_{\lambda/\mu}$ specialise to Hall–Littlewood polynomials, weak Grothendieck polynomials and their duals. In contrast, $\GG^{\buv}_{\lambda/\mu}$ includes $q$-Whittaker polynomials, Grothendieck polynomials and their duals. Unlike the situation with $\JJ^{\buv}_{\lambda/\mu}$, the full branching formula for $\GG^{\buv}_{\lambda/\mu}$ is rather cumbersome to state. We therefore restrict ourselves to presenting the branching formulas only in special cases.

\subsubsection{Transfer Matrix and Partitions}
 Let us consider the transfer matrix. The labels on all the edges corresponding to a statistic in skew Young diagrams.  Consider a vertex at site $i$ from the right:
 \[
\tikz{0.7}{
\draw[lgray,line width=5pt,->] (-1.2,0) -- (1.2,0);
\draw[lgray,line width=5pt,->] (-0.05,-1.2) -- (-0.05,1.2);
 \node at (0,-1.5) {$\ss a$};
 \node at (-1.5,0) {$\ss b$};
 \node at (0,1.5) {$\ss c$};
 \node at (1.5,0) {$\ss d$};
 \draw[->](0,-1.2)--(0,1.1);
 \draw[->](0.1,-1.2)--(0.1,0)--(1.1,0);
 \draw[->](-1.2,0)--(-0.1,0)--(-0.1,1.1);
 \draw[->](-1.2,0.1)--(-0.2,0.1)--(-0.2,1.1);
}
\]
 The label $a$ on the bottom edge of a vertex corresponds to number of columns of size $i$ in $\lambda$. Similarly, the top label corresponds to the number of boxes in row $i$ in the Young diagram of $\mu$. We can then easily identify each label of a vertex at site $i$ in a single row transfer matrix~\ref{def:spinL_transfermatrix} as follows:
\begin{equation}\label{eq:vertex_interpretation}
\tikz{0.7}{
\draw[lgray,line width=5pt,->] (-1.2,0) -- (1.2,0);
\draw[lgray,line width=5pt,->] (-0.05,-1.2) -- (-0.05,1.2);
 \draw[->](0,-1.1)--(0,1.1);
 \draw[->](0.1,-1.1)--(0.1,0)--(1.1,0);
 \draw[->](-1.1,0)--(-0.1,0)--(-0.1,1.1);
 \draw[->](-1.1,0.1)--(-0.2,0.1)--(-0.2,1.1);
 \node at (0,-1.5) {$\ss \lambda_{i}-\lambda_{i+1}$};
 \node at (-2.7,0) {$\ss \lambda_{i+1}-\mu_{i+1}$};
 \node at (0,1.5) {$\ss \mu_{i}-\mu_{i+1}$};
 \node at (2.2,0) {$\ss \lambda_{i}-\mu_{i}$};
}
\end{equation}
In certain degenerations, we often encounter a constraint on the vertex weights: specifically, the label on the left edge must be weakly smaller than the label on the top edge. In light of this constraint and the above interpretation (~\ref{eq:vertex_interpretation}), the single-variable skew polynomial will vanish unless $\lambda_{i+1} \leq \mu_i$ for all $i$. Observe that this condition is equivalent to requiring the skew diagram $\lambda/\mu$ to be a horizontal strip.

\begin{ex} For $\lambda=(6,5,3,1,1)=(1^{2}\,2^{0}\,3^{1}\,4^{0}\,5^{1}\,6^{1})$ and $\mu=(4,4,1,1)=(1^{2}\,2^{0}\,3^{0}\,4^{2})$, then the conjugate partitions $\lambda'=(1^{1}\,2^{2}\,3^{2}\,4^{0}\,5^{1})$ and $\mu'=(1^{0}\,2^{3}\,3^{0}\,4^{1}\,5^{0})$
\[
\lambda/\mu =
\begin{ytableau}
*(lgray) & *(lgray) & *(lgray)& *(lgray)&*(red)& *(red)\\
*(lgray) & *(lgray) & *(lgray)& *(lgray) & *(dgreen) \\
*(lgray) & *(blue)  & *(blue) \\
*(lgray)\\
*(pink)
\end{ytableau}
\quad
\Rightarrow
\quad
\tikz{1}{\draw[lgray,line width=4pt,->] 
(-2,1)-- (4,1);
\draw[lgray,line width=4pt,->] 
(3,0) node[below] {\color{black} $\ss 1$} -- (3,2) node[above] {\color{black} $\ss 0$};
\draw[lgray,line width=4pt,->] 
(2,0) node[below] {\color{black} $\ss 2$} -- (2,2) node[above] {\color{black} $\ss 3$};
\draw[lgray,line width=4pt,->] 
(1,0) node[below] {\color{black} $\ss 2$} -- (1,2) node[above] {\color{black} $\ss 0$};
\draw[lgray,line width=4pt,->] 
(0,0) node[below] {\color{black} $\ss 0$}  -- (0,2) node[above] {\color{black} $\ss 1$};
\draw[lgray,line width=4pt,->] 
(-1,0) node[below] {\color{black} $\ss 1$} -- (-1,2) node[above] {\color{black} $\ss 0$};
\node at (1,-1) {$\lambda'$};
\node at (1,3) {$\mu'$};
\node at (-1.5,1) {$\ss 0$};
\node[pink] at (-0.5,1) {$\ss 1$};
\node at (0.5,1) {$\ss 0$};
\node[blue] at (1.5,1) {$\ss 2$};
\node[dgreen] at (2.5,1) {$\ss 1$};
\node[red] at (3.5,1) {$\ss 2$};
}
\]    
\end{ex}

For convenience, let us recall the weights~\ref{weights:spinL_uv} that are used to define $\GG^{\buv}_{\lambda/\mu}$:

\begin{multline}
\mathbb{W}_{x;(u,v)}(a,b,c,d) \equiv \mathbb{W}_{x;(u,v)}
\left(\tikz{0.9}{
\draw[lgray,line width=4pt,->] (-1,0) -- (1,0);
\draw[lgray,line width=4pt,->] (0,-1) -- (0,1);
\node[left] at (-1,0) {\tiny $b$};\node[right] at (1,0) {\tiny $d$};
\node[below] at (0,-1) {\tiny $a$};\node[above] at (0,1) {\tiny $c$};
\node[left] at (-1.5,0) {$x \rightarrow$};
\node[below] at (0,-1.4) {$\uparrow$};
\node[below] at (0,-1.9) {$(u,v)$};
}\right)\,\\
=\, \delta_{a + b = c + d} \, x^{d}\,
\sum_{p}\,{(ux;q)_{c-p}\,}\,{(v/x;q)_{p}\,}(x/v )^{p-b}\,\binom{c+d-p}{c-p}_{q}\,\binom{b}{p}_{q}
\end{multline}
where the sum is over non negative numbers such that $0\leq p\leq \min(b,c)$.
\subsubsection{Schur polynomials}
At $q=0$ and $\mathbf{u}=\mathbf{v}=\mathbf{0}$, the weights reduce to the following:
\begin{equation}\label{eq:reduction_spinL_schur}
\mathbb{W}_{x;(0,0)}(a,b,c,d)|_{q=0}\,=\,\mathbf{1}_{b\leq c}\, x^{d}.   
\end{equation}

Then the single variable skew for $\mu \prec \lambda$, we have:
\begin{align}
\GG^{(\mathbf{0},\mathbf{0})}_{\lambda/\mu}|_{q=0}\,&=\,\prod^{\ell(\lambda)}_{i=1}\,x^{\lambda_{i}-\mu_{i}}\,=\,x^{|\lambda/\mu|} 
\end{align}

\subsubsection{Grothendieck polynomials}
At $q=0$ and $u_{i}=\beta$ for all $i\in \mathbb{Z}_{>0}$ and $\mathbf{v}=\mathbf{0}$, the weights reduce to the following:
\begin{equation}\label{eq:reduction_spinL_groth}
\mathbb{W}_{x;(\beta,0)}(a,b,c,d)\big|_{q=0}\,=\,\mathbf{1}_{b\leq c}\, x^{d} (1-\beta x)^{\mathbf{1}_{b<c}}.   
\end{equation}

Then the single variable skew for $\mu \prec \lambda$, we have:
\begin{align}
\GG^{((\beta,\dots,\beta)),\mathbf{0})}_{\lambda/\mu}\big|_{q=0}\,&=\,\prod^{\ell(\lambda)}_{i=1}\,x^{\lambda_{i}-\mu_{i}} (1-\beta x)^{\mathbf{1}_{\lambda_{i+1}<\mu_{i}}} \\
&=\,x^{|\lambda/\mu|}\,(1-\beta x)^{r(\mu/\widetilde{\lambda})}
\end{align}

where $\widetilde{\lambda} := (\lambda_2, \lambda_3, \dots)$ is the partition obtained by removing the first part of $\lambda$, and $r(\kappa / \nu)$ denotes the number of non zero rows in the skew diagram $\kappa / \nu$. This matches with the branching formula for the \emph{ Grothendieck polynomials} $G_{\lambda}$ at $\alpha=0$ from ~\cite[Proposition 8.8]{Gcom:Y2017Duality}.

\subsubsection{dual Grothendieck polynomials}
At $q=0$ and $\mathbf{u}=\mathbf{0}$ and $v_{i}=\beta$ for all $i\in \mathbb{Z}_{>0}$, the weights reduce to the following:
\begin{equation}\label{eq:reduction_spinL_dual_Groth}
\mathbb{W}_{x;(0,\beta)}(a,b,c,d)\big|_{q=0} = 
\begin{cases}
  x^{d} & \text{if } b \leq c, \\
  \beta^{d-a} x^{a} & \text{if } b > c.
\end{cases}
\end{equation}

Then the single variable skew for $\mu \subseteq \lambda$, we have:
\begin{align}
\GG^{(\mathbf{0},(\beta,\dots,\beta))}_{\lambda/\mu}\big|_{q=0}\,
&=\, \beta^{|\lambda/\mu|-c(\lambda/\mu)}x^{c(\lambda/\mu)}
\end{align}
where $c(\kappa / \nu)$ denotes the number of non zero columns in the skew diagram $\kappa / \nu$.  This matches with the branching formula for the \emph{dual Grothendieck polynomials} $g_{\lambda}$ at $\alpha=0$ from ~\cite[Theorem 8.6]{Gcom:Y2017Duality}.

\begin{rmk}
At $u_{i}=v_{i}=1$ for all $i\in\mathbb{Z}_{\geq 0}$ and $q=0$, $\mathfrak{G}^{\buv}_{\lambda}$ have close resemblance to \emph{Hybrid Grothendieck polynomials} introduced in~\cite{Hybrid}.
\end{rmk}

\subsubsection{$q$-Whittaker polynomials}
At $\mathbf{u}=\mathbf{0}$ and $\mathbf{v}=\mathbf{0}$, the weights reduce to the following:
\begin{align}\label{eq:reduction_spinL_Whitt}
\mathbb{W}_{x;(\mathbf{0},\mathbf{0})}\left(\tikz{0.7}{
\draw[lgray,line width=5pt,->] (-1.2,0) -- (1.2,0);
\draw[lgray,line width=5pt,->] (-0.05,-1.2) -- (-0.05,1.2);
 \draw[->](0,-1.1)--(0,1.1);
 \draw[->](0.1,-1.1)--(0.1,0)--(1.1,0);
 \draw[->](-1.1,0)--(-0.1,0)--(-0.1,1.1);
 \draw[->](-1.1,0.1)--(-0.2,0.1)--(-0.2,1.1);
 \node at (0,-1.5) {$\ss \lambda_{i}-\lambda_{i+1}$};
 \node at (-2.7,0) {$\ss \lambda_{i+1}-\mu_{i+1}$};
 \node at (0,1.5) {$\ss \mu_{i}-\mu_{i+1}$};
 \node at (2.2,0) {$\ss \lambda_{i}-\mu_{i}$};
}\right)\, &=\, \mathbf{1}_{\lambda_{i+1}\leq \mu_{i} }\,x^{\lambda_{i}-\mu_{i}}\,\binom{\lambda_{i}-\lambda_{i+1}}{\mu_{i}-\lambda_{i+1}}_{q}\\[0.7em]
&=\, \mathbf{1}_{\lambda_{i+1}\leq \mu_{i} }\,x^{\lambda_{i}-\mu_{i}}\,\dfrac{(q;q)_{\lambda_{i}-\lambda_{i+1}}}{(q;q)_{\lambda_{i}-\mu_{i}}\,(q;q)_{\mu_{i}-\lambda_{i+1}}}
\end{align}

Then the single variable skew for $\mu \prec \lambda$, we have:
\begin{align}
\GG^{(\mathbf{0},\mathbf{0})}_{\lambda/\mu}\,
&=\, x^{|\lambda/\mu|}\,\prod_{i\geq 1}\,\dfrac{(q;q)_{\lambda_{i}-\lambda_{i+1}}}{(q;q)_{\lambda_{i}-\mu_{i}}\,(q;q)_{\mu_{i}-\lambda_{i+1}}}.
\end{align}
This matches precisely with the branching formula for $q$-Whittaker polynomials~\cite[subsection~6.3]{spin-BW2021}.

\subsubsection{Inhomogeneous $q$-Whittaker polynomials}

At $\mathbf{v}=\mathbf{0}$, the weights reduce to the following:
\begin{align}\label{eq:reduction_spinL_dual_inWhit}
\mathbb{W}_{x;(\mathbf{u},\mathbf{0})}\left(\tikz{0.7}{
\draw[lgray,line width=5pt,->] (-1.2,0) -- (1.2,0);
\draw[lgray,line width=5pt,->] (-0.05,-1.2) -- (-0.05,1.2);
 \draw[->](0,-1.1)--(0,1.1);
 \draw[->](0.1,-1.1)--(0.1,0)--(1.1,0);
 \draw[->](-1.1,0)--(-0.1,0)--(-0.1,1.1);
 \draw[->](-1.1,0.1)--(-0.2,0.1)--(-0.2,1.1);
 \node at (0,-1.5) {$\ss \lambda_{i}-\lambda_{i+1}$};
 \node at (-2.7,0) {$\ss \lambda_{i+1}-\mu_{i+1}$};
 \node at (0,1.5) {$\ss \mu_{i}-\mu_{i+1}$};
 \node at (2.2,0) {$\ss \lambda_{i}-\mu_{i}$};
}\right)\, &=\, \mathbf{1}_{\lambda_{i+1}\leq \mu_{i} }\,x^{\lambda_{i}-\mu_{i}}\,(u_{i}x;q)_{\mu_{i}-\lambda_{i+1}}\,\binom{\lambda_{i}-\lambda_{i+1}}{\mu_{i}-\lambda_{i+1}}_{q}\\[0.7em]
&=\, \mathbf{1}_{\lambda_{i+1}\leq \mu_{i} }\,x^{\lambda_{i}-\mu_{i}}\,(u_{i}x;q)_{\mu_{i}-\lambda_{i+1}}\,\dfrac{(q;q)_{\lambda_{i}-\lambda_{i+1}}}{(q;q)_{\lambda_{i}-\mu_{i}}\,(q;q)_{\mu_{i}-\lambda_{i+1}}}
\end{align}

Then the single variable skew for $\mu \prec \lambda$, we have:
\begin{align}
\GG^{(\mathbf{u},\mathbf{0})}_{\lambda/\mu}\,
&=\, x^{|\lambda/\mu|}\,\prod_{i\geq 1} \,(u_{i}x;q)_{\mu_{i}-\lambda_{i+1}}\,\dfrac{(q;q)_{\lambda_{i}-\lambda_{i+1}}}{(q;q)_{\lambda_{i}-\mu_{i}}\,(q;q)_{\mu_{i}-\lambda_{i+1}}}.
\end{align}
This coincides, up to a factor depending on $u_i$, with the branching formula for inhomogeneous $q$-Whittaker polynomials \cite[Proposition~5.1]{spin-K2024} at $a_{i}=0$.

\section{Expansions}\label{sec:expansions}
In this section, we derive a combinatorial formula for the coefficients appearing in the expansion of the polynomials $\mathfrak{G}^{\buv}_{\lambda}$ in terms of inhomogeneous $q$-Whittaker polynomials. By making suitable specialisations, we then obtain explicit formulas for the expansions listed in Question~\ref{ques}.

\begin{thm}\label{thm:general_expansion} Fix a positive integer $n$, and let $\lambda$ be a partition. Then the polynomial $\GG^{\buv}_{\lambda}$ can be expanded interms of Inhomogeneous $q$-Whittaker polynomials as follows:
\begin{equation}
\GG^{\buv}_{\lambda}(x_{1},\dots,x_{n})\,=\,\sum_{\mu}\,d_{\lambda,\mu}(\mathbf{u},\mathbf{v},\mathbf{y})\,\GG^{(\mathbf{y},\mathbf{0})}_{\mu}(x_{1},\dots,x_{n})
\end{equation}  
where
\begin{equation}
\,d_{\lambda,\mu}(\mathbf{u},\mathbf{v},\mathbf{y})\,:=\,
 \tikz{1}{
\foreach \x in {1,2,3}{
\draw[lgray,line width= 4 pt,->] (0,\x-0.5)--(3,\x-0.5);
};
\foreach \x in {1,2,3}{
\draw[lgray,line width= 4 pt,->] (\x-0.5,0)--(\x-0.5,3);
};
\node at (3.7,2.5) {$\ss m_{n}(\mu')$};
\node at (3.5,1.5) {$ \vdots$};
\node at (3.7,0.5) {$\ss m_{1}(\mu')$};
\node at (0.5,3.3) {$\ss 0$};
\node at (1.5,3.3) {$\dots$};
\node at (2.5,3.3) {$\ss 0$};
\node at (0.5,-0.5) {$\ss m_{n}(\lambda')$};
\node at (1.5,-0.5) {$\dots$};
\node at (2.5,-0.5) {$\ss m_{1}(\lambda')$};
\node at (-0.3,0.5) {$\ss 0$};
\node at (-0.3,1.5) {$\ss 0$};
\node at (-0.3,2.5) {$\ss 0$};
\node[left] at (-0.7,2.5) {$\ss y_{n}^{-1} \rightarrow$};
\node[left] at (-1,1.5) {$\vdots$};
\node[left] at (-0.7,0.5) {$\ss y_{1}^{-1} \rightarrow$};
\node[below] at (2.5,-1) {$\uparrow$};
\node[below] at (2.5,-1.5) {$\ss (u_{1},v_{1})$};
\node[below] at (1.5,-1.5) {$\dots$};
\node[below] at (0.5,-1) {$\uparrow$};
\node[below] at (0.5,-1.5) {$\ss (u_{n},v_{n})$};
}
\end{equation}
using the weights:
\begin{equation}
\tikz{0.9}{
\draw[lgray,line width=4pt,->] (-1,0) -- (1,0);
\draw[lgray,line width=4pt,->] (0,-1) -- (0,1);
\node[left] at (-1,0) {\tiny $b$};\node[right] at (1,0) {\tiny $d$};
\node[below] at (0,-1) {\tiny $a$};\node[above] at (0,1) {\tiny $c$};
\node[left] at (-1.5,0) {$y \rightarrow$};
\node[below] at (0,-1.4) {$\uparrow$};
\node[below] at (0,-1.9) {$(u,v)$};
}
=\, \delta_{a + b = c + d} \,\, y^{a-d}\,
\sum_{p}\,{(uy^{-1};q)_{c-p}\,}\,{(vy;q)_{p}\,}(vy)^{b-p}\,\binom{c+d-p}{c-p}_{q}\,\binom{b}{p}_{q}
\end{equation}
\end{thm}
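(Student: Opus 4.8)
The plan is to establish the identity in the same spirit as Theorems~\ref{thm:cauchy_spin1}--\ref{thm:spinL_Cauchy_qwhittaker} and the symmetry statement of Theorem~\ref{thm:GG_symmetric}: realise the right-hand side as the partition function of a single lattice, and then transport that lattice, by repeated use of the Yang--Baxter equation, onto the lattice of Definition~\ref{def:spinL_uv_poly} computing $\GG^{\buv}_\lambda(x_1,\dots,x_n)$. Since particle conservation forces $\mu_1=\lambda_1$ in both the $d$- and the $\GG^{(\mathbf{y},\mathbf{0})}$-lattices and bounds $|\mu|$, the sum over $\mu$ is finite and purely formal, so no analytic hypotheses are needed.

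First I would re-read the two factors. By the remark following \eqref{weights:spinL_Rmatrix}, applied with $y\mapsto y_i^{-1}$, one has $\mathbb{W}_{x;(y_i,0)}(a,b,c,d)=y_i^{-d}\,\mathbb{R}_{xy_i}(a,b,c,d)$, so the $(y_i,0)$-column of the $\GG^{(\mathbf{y},\mathbf{0})}_\mu$-lattice is, up to the abelian gauge $\prod y_i^{-d}$ collected along that column, an $R$-matrix line of spectral parameter $y_i^{-1}$. Dually, unravelling the $d$-weight shows that it equals $y^{a}\,\mathbb{W}_{y^{-1};(u,v)}(a,b,c,d)$, so the $d_{\lambda,\mu}$-lattice is, up to the abelian gauge $\prod y_k^{a}$, the $\GG^{\buv}$-lattice with row spectral parameters $y_1^{-1},\dots,y_n^{-1}$ and with the right edge of the $k$-th row frozen to $m_k(\mu')$. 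The vector $\ket{\mu'}$ is the same object whether read along the $n$ columns of $\GG^{(\mathbf{y},\mathbf{0})}_\mu$ (its bottom boundary) or along the $n$ rows of $d_{\lambda,\mu}$ (its right boundary); hence $\sum_\mu d_{\lambda,\mu}\,\GG^{(\mathbf{y},\mathbf{0})}_\mu(x_1,\dots,x_n)$ is precisely the partition function of the $L$-shaped lattice obtained by joining the $d_{\lambda,\mu}$-block to the $\GG^{(\mathbf{y},\mathbf{0})}_\mu$-block along the common $\mu'$-edges, the sum over $\mu$ effecting the contraction. In this composite there are $n$ lines carrying spectral parameters $y_1^{-1},\dots,y_n^{-1}$, each entering horizontally and crossing the $(u_i,v_i)$-columns through $\mathbb{W}_{y_k^{-1};(u_i,v_i)}$-vertices, then turning a corner and leaving vertically, crossing the $x_k$-rows through $R$-matrix vertices; the $(u,v)$-columns run from bottom boundary $m_i(\lambda')$ to top boundary $0$, and the $x$-rows from left boundary $0$ to a free right boundary, which is exactly the outer boundary data of the $\GG^{\buv}_\lambda$-lattice apart from the presence of the bent lines.

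The substance of the proof is to remove these $n$ bent lines. One does so one line at a time: the crossing of a bent $y_j^{-1}$-line with an $x_k$-row is read as an $R$-matrix (remark following \eqref{weights:spinL_Rmatrix}) and then slid past the $(u,v)$-columns by the $\mathbb{W}$--$\mathbb{R}$ Yang--Baxter equation of Proposition~\ref{prop:spinL_RLL_relation}; iterating, the corner of the $j$-th bent line is swept through the lattice until that line has been carried off to the boundary, where it is absorbed by the sum-to-unity property $\sum_{c,d}\mathbb{R}_{x/y}(a,b,c,d)=1$ recorded after \eqref{weights:spinL_Rmatrix} -- the same device by which Proposition~\ref{prop:sum_to_unity} is invoked at the end of the proof of Theorem~\ref{thm:JJ_symmetric}. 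As the corners sweep inward, the $(u,v)$-columns and the $x$-rows are brought into contact; once all $n$ bent lines are gone, the composite has collapsed to the rectangular lattice of Definition~\ref{def:spinL_uv_poly} for $\GG^{\buv}_\lambda(x_1,\dots,x_n)$. Equating the two evaluations of the composite partition function then yields the stated expansion.

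The step I expect to be the main obstacle is the bookkeeping of the abelian gauge factors. One accumulates $\prod y_k^{a}$ from the $d$-weights, $\prod y_i^{-d}$ from the reinterpretation of the $\GG^{(\mathbf{y},\mathbf{0})}$-block, and further monomials in the $y_j$ each time an $R$-matrix crossing is pushed through a $\mathbb{W}$-vertex; since $\GG^{\buv}_\lambda(x_1,\dots,x_n)$ contains no $y_j$, all of this must cancel. I would control it by fixing an arbitrary admissible configuration of the composite lattice, tallying for each vertex the power of $y_j$ contributed by the gauge and by each Yang--Baxter move in which it takes part, and checking that for every $j$ the total telescopes to zero. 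Once this ledger is seen to balance configuration by configuration, the argument is complete.
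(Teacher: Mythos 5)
Your overall strategy is the paper's own argument run in reverse: the paper starts from a $2n$-row lattice in which the $x$--$y^{-1}$ crossings sit to the \emph{left} of the $(u_i,v_i)$-columns (there every crossing and every $y^{-1}$-row vertex is forced to carry only zero labels, so the lattice is manifestly $\GG^{\buv}_\lambda$), applies Proposition~\ref{prop:spinL_RLL_relation} to transport the crossings to the right of the columns, and then factorises the resulting lattice as $\sum_\mu d_{\lambda,\mu}\cdot(\text{gauge})\cdot\GG^{(\mathbf{y},\mathbf{0})}_\mu$. Your gauge identifications (the $d$-weight as $y^{a}\,\mathbb{W}_{y^{-1};(u,v)}(a,b,c,d)$, the $\GG^{(\mathbf{y},\mathbf{0})}$-columns as $R$-matrix lines) are correct, as is the gluing of the two blocks along the $\mu'$-edges into a single composite lattice.

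The gap is in the passage from your glued $L$-shaped lattice to a lattice on which the Yang--Baxter moves can actually act. In the composite, the top edges of the $(u_i,v_i)$-columns and the left edges of the $x_k$-rows are \emph{frozen} to zero, being boundary edges of the two blocks. To slide an $x_k$--$y_j^{-1}$ crossing past a column via Proposition~\ref{prop:spinL_RLL_relation} you must first supply the missing third vertex (where $x_k$ meets $(u_i,v_i)$) and then sum freely over all three internal edges of the local triangle; two of those three edges are precisely the frozen ones, and freeing them changes the partition function unless one proves that the free sum localises back onto the zero values. This is exactly the ``all $a_i=0$'' step of the paper's proof: the crossing weight contains $\binom{a}{c-b}_q$ and hence vanishes whenever the left label exceeds the top label, and one propagates zeros row by row from the zero top boundary of the crossing block. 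Without this localisation your sweep cannot even begin; it is the one place where a property of the weights beyond the YBE and the gauge relations is required, and your proposal never invokes it. Two lesser points: the crossings are ultimately removed not by sum-to-unity but because, once transported past all the columns, they sit in the region where conservation from the zero left boundary forces every label to vanish, so each crossing contributes $\mathbb{R}(0,0,0,0)=1$; and the gauge ledger you defer is a genuine computation --- in the paper it is the discrepancy between $y^{a-d}$ and $y^{-d}$, resolved by absorbing the factor $\prod_{i=1}^{n} y_i^{\sum_{j\geq i} m_j(\mu')}$ into the $d$-weights rather than by a configuration-by-configuration cancellation.
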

\begin{proof}
Our proof is again an application of the YBE. We begin by considering the right hand side of~\ref{prop:spinL_RLL_relation} but with many crosses. We then choose the boundary conditions in such a way that the partition function of this lattice will be $\GG^{(u,v)}_{\lambda}$. Then by applying the YBE, we obtain an equivalent expression which will gives us the desired expansion. Below, we present the lattice under consideration on the left-hand side and its equivalent expression, obtained via the Yang–Baxter equation (YBE), on the right-hand side.
\[
 \tikz{1}{
\foreach \x in {1,2,3}{
\draw[lgray,line width= 4 pt,->] (\x-0.5,0)--(\x-0.5,6);
};
\draw[lgray,line width= 4 pt,->] (0,0.5)++(135:4.4) node[left,black]{$\ss 0$}--++(-45:4.4)--++(0:3);
\draw[lgray,line width= 4 pt,,->] (0,1.5)++(135:3.7) node[left,black]{$\ss 0$}--++(-45:3.7)--++(0:3);
\draw[lgray,line width= 4 pt,->] (0,2.5)++(135:3) node[left,black]{$\ss 0$}--++(-45:3)--++(0:3);
\draw[lgray,line width= 4 pt,->] (0,3.5)++(-135:3) node[left,black]{$\ss 0$}--++(45:3)--++(0:3);
\draw[lgray,line width= 4 pt,->] (0,4.5)++(-135:3.7) node[left,black]{$\ss 0$}--++(45:3.7)--++(0:3);
\draw[lgray,line width= 4 pt,->] (0,5.5)++(-135:4.4)node[left,black]{$\ss 0$}--++(45:4.4)--++(0:3);
\draw[line width= 0.7 pt,->] (0,0.5)++(135:5.5) node[above] {$\ss x_{n}$}--++(-45:0.3);
\draw[line width= 0.7 pt,->] (0,1.5)++(135:4.7) node[above,rotate=45] {$ \dots$}--++(-45:0.3);
\draw[line width= 0.7 pt,->] (0,2.5)++(135:4) node[above] {$\ss x_{1}$}--++(-45:0.3);
\draw[line width= 0.7 pt,->] (0,3.5)++(-135:4) node[below,black]{$\ss y_{1}^{-1}$}--++(45:0.3);
\draw[line width= 0.7 pt,->] (0,4.5)++(-135:4.7) node[below,black,rotate=-45]{$ \dots$}--++(45:0.3);
\draw[line width= 0.7 pt,->] (0,5.5)++(-135:5.5) node[below,black]{$\ss y_{n}^{-1}$}--++(45:0.3);
\node at (3.3,5.5) {$\ss 0$};
\node at (3.3,4.5) {$\ss 0$};
\node at (3.3,3.5) {$\ss 0$};
\node at (3.3,2.5) {$ *$};
\node at (3.3,1.5) {$ *$};
\node at (3.3,0.5) {$ *$};
\node at (0.5,6.3) {$\ss 0$};
\node at (1.5,6.3) {$\dots$};
\node at (2.5,6.3) {$\ss 0$};
\node at (0.5,-0.5) {$\ss m_{n}(\lambda')$};
\node at (1.5,-0.5) {$\dots$};
\node at (2.5,-0.5) {$\ss m_{1}(\lambda')$};
\draw[line width= 0.7 pt,->] (2.5,-1.5) node[below] {$\ss (u_{1},v_{1})$}--(2.5,-1.2);
\draw[line width= 0.7 pt] (1.5,-1.5) node[below] {$\dots$};
\draw[line width= 0.7 pt,->] (0.5,-1.5) node[below] {$\ss (u_{n},v_{n})$}--(0.5,-1.2);
}\,=
\,
\tikz{1}{
\foreach \x in {2,3,4}{
\draw[lgray,line width= 4 pt,->] (\x-0.5,0)--(\x-0.5,6);
};
\draw[lgray,line width= 4 pt,->] (1,0.5)--(4,0.5)--++(45:4.4)
node[right,black] {$\ss 0$};
\draw[lgray,line width= 4 pt,->] (1,1.5)--(4,1.5)--++(45:3.7)
node[right,black] {$\ss 0$};
\draw[lgray,line width= 4 pt,->] (1,2.5)--(4,2.5)--++(45:3) node[right,black] {$\ss 0$};
\draw[lgray,line width= 4 pt,->] (1,3.5)--(4,3.5)--++(-45:3) node[right,black] {$*$};
\draw[lgray,line width= 4 pt,->] (1,4.5)--(4,4.5)--++(-45:3.7) node[right,black] {$*$};
\draw[lgray,line width= 4 pt,->] (1,5.5)--(4,5.5)--++(-45:4.4) node[right,black] {$*$};
\node at (1-0.3,0.5) {$\ss 0$};
\node at (1-0.3,1.5) {$\ss 0$};
\node at (1-0.3,2.5) {$\ss 0$};
\node at (1-0.3,3.5) {$\ss 0$};
\node at (1-0.3,4.5) {$\ss 0$};
\node at (1-0.3,5.5) {$\ss 0$};
\node at (1.5,6.3) {$\ss 0$};
\node at (2.5,6.3) {$\dots$};
\node at (3.5,6.3) {$\ss 0$};
\node at (1.5,-0.5) {$\ss m_{n}(\lambda')$};
\node at (2.5,-0.5) {$\dots$};
\node at (3.5,-0.5) {$\ss m_{1}(\lambda')$};
\draw[line width= 0.7 pt,->]  (0,3.5) node[left] {$\ss x_{n}$}--(0.3,3.5);
\node[left] at (0.3,4.5) {$\vdots$};
\draw[line width= 0.7 pt,->]  (0,5.5) node[left] {$\ss x_{1}$}--(0.3,5.5);
\draw[line width= 0.7 pt,->]  (0,0.5) node[left] {$\ss y^{-1}_{1}$}--(0.3,0.5);
\node[left] at (0.3,1.5) {$\vdots$};
\draw[line width= 0.7 pt,->]  (0,2.5) node[left] {$\ss y^{-1}_{n}$}--(0.3,2.5);
\draw[line width= 0.7 pt,->] (3.5,-1.5) node[below] {$\ss (u_{1},v_{1})$}--(3.5,-1.2);
\draw[line width= 0.7 pt] (2.5,-1.5) node[below] {$\dots$};
\draw[line width= 0.7 pt,->] (1.5,-1.5) node[below] {$\ss (u_{n},v_{n})$}--(1.5,-1.2);
}
\]
On the left hand side, we have particles entering from the bottom. These particles can only take North-East paths. Therefore, then can only exit through the right boundary. We choose boundary of the right boundary such that the particles can only exit from the bottom $n$ rows of the grid. This implies that the entire lattice can be factored as a product of three partition functions as shown below:
\[
 \tikz{1}{
\foreach \x in {5,6,7}{
\draw[lblack,line width= 4 pt] (2,\x-0.5)--(5,\x-0.5);
};
\foreach \x in {0,1,2}{
\draw[lblack,line width= 4 pt] (2,\x-0.5)--(5,\x-0.5);
};
\foreach \x in {3,4,5}{
\draw[lblack,line width= 4 pt] (\x-0.5,-1)--(\x-0.5,2);
\draw[lblack,line width= 4 pt] (\x-0.5,4)--(\x-0.5,7);
};
\draw[lblack,line width= 4 pt,<-] (0,0.5) node[right,black]{$\ss 0$}--++(135:4.4) node[left,black]{$\ss 0$};
\draw[lblack,line width= 4 pt,<-] (0,1.5) node[right,black]{$\ss 0$}--++(135:3.7) node[left,black]{$\ss 0$};
\draw[lblack,line width= 4 pt,<-] (0,2.5) node[right,black]{$\ss 0$}--++(135:3) node[left,black]{$\ss 0$};
\draw[lblack,line width= 4 pt,<-] (0,3.5) node[right,black]{$\ss 0$}--++(-135:3) node[left,black]{$\ss 0$};
\draw[lblack,line width= 4 pt,<-] (0,4.5) node[right,black]{$\ss 0$}--++(-135:3.7) node[left,black]{$\ss 0$};
\draw[lblack,line width= 4 pt,<-] (0,5.5) node[right,black]{$\ss 0$}--++(-135:4.4)node[left,black]{$\ss 0$};
\draw[line width= 0.7 pt,->] (0,0.5)++(135:5.5) node[above] {$\ss x_{n}$}--++(-45:0.3);
\draw[line width= 0.7 pt,->] (0,1.5)++(135:4.7) node[above,rotate=45] {$ \dots$}--++(-45:0.3);
\draw[line width= 0.7 pt,->] (0,2.5)++(135:4) node[above] {$\ss x_{1}$}--++(-45:0.3);
\draw[line width= 0.7 pt,->] (0,3.5)++(-135:4) node[below,black]{$\ss y_{1}^{-1}$}--++(45:0.3);
\draw[line width= 0.7 pt,->] (0,4.5)++(-135:4.7) node[below,black,rotate=-45]{$ \dots$}--++(45:0.3);
\draw[line width= 0.7 pt,->] (0,5.5)++(-135:5.5) node[below,black]{$\ss y_{n}^{-1}$}--++(45:0.3);
\node at (2.5,7.3) {$\ss 0$};
\node at (3.5,7.3) {$\dots$};
\node at (4.5,7.3) {$\ss 0$};
\node at (5.3,6.5) {$\ss 0$};
\node at (5.3,5.5) {$\ss 0$};
\node at (5.3,4.5) {$\ss 0$};
\node at (1.7,6.5) {$\ss 0$};
\node at (1.7,5.5) {$\ss 0$};
\node at (1.7,4.5) {$\ss 0$};
\node at (2.5,3.7) {$\ss 0$};
\node at (3.5,3.7) {$\dots$};
\node at (4.5,3.7) {$\ss 0$};
\node at (5.3,1.5) {$ *$};
\node at (5.3,0.5) {$ *$};
\node at (5.3,-0.5) {$ *$};
\node at (1.7,1.5) {$\ss 0$};
\node at (1.7,0.5) {$\ss 0$};
\node at (1.7,-0.5) {$\ss 0$};
\node at (2.5,2.3) {$\ss 0$};
\node at (3.5,2.3) {$\dots$};
\node at (4.5,2.3) {$\ss 0$};
\node at (2.5,-1.5) {$\ss m_{n}(\lambda')$};
\node at (3.5,-1.5) {$\dots$};
\node at (4.5,-1.5) {$\ss m_{1}(\lambda')$};
\draw[line width= 0.7 pt,->] (4.5,-2.5) node[below] {$\ss (u_{1},v_{1})$}--(4.5,-2.2);
\draw[line width= 0.7 pt] (3.5,-2.5) node[below] {$\dots$};
\draw[line width= 0.7 pt,->] (2.5,-2.5) node[below] {$\ss (u_{n},v_{n})$}--(2.5,-2.2);
}
\]
We can write the right hand side as the following summation:

\[
\sum_{a_{i},b_{i}\,\in\,\mathbb{Z}_{\geq0}}\,\,
\tikz{1}{
\foreach \x in {1,2,3,4,5,6}{
\draw[lblack,line width= 4 pt] (1,\x-0.5)--(4,\x-0.5);
};
\foreach \x in {2,3,4}{
\draw[lblack,line width= 4 pt] (\x-0.5,0)--(\x-0.5,6);
};
\draw[lblack,line width= 4 pt,->] (6,0.5) node[left,black]{$\ss b_{n}$}--++(45:4.4)
node[right,black] {$\ss 0$};
\draw[lblack,line width= 4 pt,->] (6,1.5) node[left,black]{$\vdots$}--++(45:3.7)
node[right,black] {$\ss 0$};
\draw[lblack,line width= 4 pt,->] (6,2.5) node[left,black]{$\ss b_{1}$}--++(45:3) node[right,black] {$\ss 0$};
\draw[lblack,line width= 4 pt,->] (6,3.5) node[left,black]{$\ss a_{n}$}--++(-45:3) node[right,black] {$*$};
\draw[lblack,line width= 4 pt,->] (6,4.5) node[left,black]{$\vdots$}--++(-45:3.7) node[right,black] {$*$};
\draw[lblack,line width= 4 pt,->] (6,5.5) node[left,black]{$\ss a_{1}$}--++(-45:4.4) node[right,black] {$*$};
\node at (1-0.3,0.5) {$\ss 0$};
\node at (1-0.3,1.5) {$\ss 0$};
\node at (1-0.3,2.5) {$\ss 0$};
\node at (1-0.3,3.5) {$\ss 0$};
\node at (1-0.3,4.5) {$\ss 0$};
\node at (1-0.3,5.5) {$\ss 0$};
\node at (4.3,0.5) {$\ss b_{n}$};
\node at (4.3,1.5) {$\ss \vdots$};
\node at (4.3,2.5) {$\ss b_{1}$};
\node at (4.3,3.5) {$\ss a_{n}$};
\node at (4.3,4.5) {$\ss \vdots$};
\node at (4.3,5.5) {$\ss a_{1}$};
\node at (1.5,6.3) {$\ss 0$};
\node at (2.5,6.3) {$\dots$};
\node at (3.5,6.3) {$\ss 0$};
\node at (1.5,-0.5) {$\ss m_{n}(\lambda')$};
\node at (2.5,-0.5) {$\dots$};
\node at (3.5,-0.5) {$\ss m_{1}(\lambda')$};
\draw[line width= 0.7 pt,->]  (0,3.5) node[left] {$\ss x_{n}$}--(0.3,3.5);
\node[left] at (0.3,4.5) {$\vdots$};
\draw[line width= 0.7 pt,->]  (0,5.5) node[left] {$\ss x_{1}$}--(0.3,5.5);
\draw[line width= 0.7 pt,->]  (0,0.5) node[left] {$\ss y^{-1}_{1}$}--(0.3,0.5);
\node[left] at (0.3,1.5) {$\vdots$};
\draw[line width= 0.7 pt,->]  (0,2.5) node[left] {$\ss y^{-1}_{n}$}--(0.3,2.5);
\draw[line width= 0.7 pt,->] (3.5,-1.5) node[below] {$\ss (u_{1},v_{1})$}--(3.5,-1.2);
\draw[line width= 0.7 pt] (2.5,-1.5) node[below] {$\dots$};
\draw[line width= 0.7 pt,->] (1.5,-1.5) node[below] {$\ss (u_{n},v_{n})$}--(1.5,-1.2);
}
\]

 We claim that all $a_{i}=0$ for all $1\leq i\leq n$ in other words, all the particles that are entering from the bottom must exit through the bottom $n$ rows. To conclude this we observe the possible configurations in the ``crosses'':

\begin{equation}
 \tikz{1}{
\foreach \x in {1,2,3}{
\draw[lgray,line width= 4 pt,->] (0,\x-0.5)--(3,\x-0.5);
};
\foreach \x in {1,2,3}{
\draw[lgray,line width= 4 pt,->] (\x-0.5,0)--(\x-0.5,3);
};
\node at (3.3,2.5) {$\ss *$};
\node at (3.3,1.5) {$ \ss *$};
\node at (3.3,0.5) {$\ss *$};
\node at (0.5,3.3) {$\ss 0$};
\node at (1.5,3.3) {$\ss 0$};
\node at (2.5,3.3) {$\ss 0$};
\node at (0.5,-0.5) {$\ss b_{1}$};
\node at (1.5,-0.5) {$\dots$};
\node at (2.5,-0.5) {$\ss b_{n}$};
\node at (-0.3,0.5) {$\ss a_{n}$};
\node at (-0.3,1.5) {$\ss \vdots$};
\node at (-0.3,2.5) {$\ss a_{1}$};
\node[left] at (-0.7,2.5) {$\ss x_{1} \rightarrow$};
\node[left] at (-1,1.5) {$\vdots$};
\node[left] at (-0.7,0.5) {$\ss x_{n} \rightarrow$};
\node[below] at (2.5,-1) {$\uparrow$};
\node[below] at (2.5,-1.5) {$\ss y_{1}^{-1}$};
\node[below] at (1.5,-1.5) {$\dots$};
\node[below] at (0.5,-1) {$\uparrow$};
\node[below] at (0.5,-1.5) {$\ss y^{-1}_{n}$};
}
\end{equation}
using the weights:
\begin{equation}
\tikz{0.9}{
\draw[lgray,line width=4pt,->] (-1,0) -- (1,0);
\draw[lgray,line width=4pt,->] (0,-1) -- (0,1);
\node[left] at (-1,0) {\tiny $b$};\node[right] at (1,0) {\tiny $d$};
\node[below] at (0,-1) {\tiny $a$};\node[above] at (0,1) {\tiny $c$};
\node[left] at (-1.5,0) {${\ss x} \rightarrow$};
\node[below] at (0,-1.4) {$\uparrow$};
\node[below] at (0,-1.9) {$\ss y^{-1}$};
}
=\, \delta_{a + b = c + d} \,\, x^{d}\,y^{d}\,(xy;q)_{c-b}\,\binom{a}{c-b}_{q}\,
\end{equation}

Observe that the above weights vanish identically whenever \( c < b \). This enforces the condition that if the top edge of a vertex is labelled zero, then its left edge must also be zero. Since all the top boundary labels are zero, this constraint implies that the left edges of all vertices in the first row must also be zero, as illustrated in the figure below (on the left).

\begin{equation} 
 \tikz{1}{
\foreach \x in {1,2,3}{
\draw[lgray,line width= 4 pt,->] (0,\x-0.5)--(3,\x-0.5);
};
\foreach \x in {1,2,3}{
\draw[lgray,line width= 4 pt,->] (\x-0.5,0)--(\x-0.5,3);
};
\node at (3.3,2.5) {$\ss *$};
\node at (3.3,1.5) {$ \ss *$};
\node at (3.3,0.5) {$\ss *$};
\node at (0.5,3.3) {$\ss 0$};
\node at (1.5,3.3) {$\ss 0$};
\node at (2.5,3.3) {$\ss 0$};
\node at (0.5,-0.5) {$\ss b_{1}$};
\node at (1.5,-0.5) {$\dots$};
\node at (2.5,-0.5) {$\ss b_{n}$};
\node at (-0.3,0.5) {$\ss a_{n}$};
\node at (-0.3,1.5) {$\ss \vdots$};
\node at (-0.3,2.5) {$\ss 0$};
\node at (1,2.5) {$\ss 0$};
\node at (2,2.5) {$\ss 0$};
}
\hspace{2cm}
 \tikz{1}{
\foreach \x in {1,2,3}{
\draw[lgray,line width= 4 pt,->] (0,\x-0.5)--(3,\x-0.5);
};
\foreach \x in {1,2,3}{
\draw[lgray,line width= 4 pt,->] (\x-0.5,0)--(\x-0.5,3);
};
\node at (3.3,2.5) {$\ss *$};
\node at (3.3,1.5) {$ \ss *$};
\node at (3.3,0.5) {$\ss *$};
\node at (0.5,3.3) {$\ss 0$};
\node at (1.5,3.3) {$\ss 0$};
\node at (2.5,3.3) {$\ss 0$};
\node at (0.5,-0.5) {$\ss b_{1}$};
\node at (1.5,-0.5) {$\dots$};
\node at (2.5,-0.5) {$\ss b_{n}$};
\node at (-0.3,0.5) {$\ss 0$};
\node at (-0.3,1.5) {$\ss 0$};
\node at (-0.3,2.5) {$\ss 0$};
\node at (1,2.5) {$\ss 0$};
\node at (2,2.5) {$\ss 0$};
\node at (0.5,2) {$\ss 0$};
\node at (1.5,2) {$\ss 0$};
\node at (1,1.5) {$\ss 0$};
\node at (0.5,1) {$\ss 0$};
}
\end{equation}

This reasoning implies that in the second row, the top edges of the vertices in the first \( n-1 \) columns from the left are again labelled zero. Applying the same argument to this row, we deduce that \( a_2 = 0 \). Iterating this process row by row, we conclude that \( a_i = 0 \) for all \( i \in \{1, \dots, n\} \), as illustrated above (on the right).We can then conclude that the right hand side factors as following summation:
 
\[
\sum_{\mu}
\tikz{1}{
\foreach \x in {5,6,7}{
\draw[lblack,line width= 4 pt] (0,\x-0.5) node[left,black]{$\ss 0$}--(3,\x-0.5) node[right,black]{$\ss 0$};
};
\foreach \x in {1,2,3}{
\draw[lblack,line width= 4 pt] (\x-0.5,4) node[below,black]{$\ss 0$}--(\x-0.5,7) node[above,black]{$\ss 0$};
};
\foreach \x in {0,1,2}{
\draw[lblack,line width= 4 pt] (0,\x-0.5) node[left,black]{$\ss 0$} --(3,\x-0.5);
};
\foreach \x in {1,2,3}{
\draw[lblack,line width= 4 pt] (\x-0.5,-1)--(\x-0.5,2) node[above,black]{$\ss 0$};
};
\draw[lblack,line width= 4 pt,->] (6,0.5)node[left,black] {$\ss m_{1}(\mu')$} --++(45:4.4)
node[right,black] {$\ss 0$};
\draw[lblack,line width= 4 pt,->] (6,1.5) node[left,black] {$\ss \vdots $}--++(45:3.7)
node[right,black] {$\ss 0$};
\draw[lblack,line width= 4 pt,->] (6,2.5) node[left,black] {$\ss m_{n}(\mu')$}--++(45:3) node[right,black] {$\ss 0$};
\draw[lblack,line width= 4 pt,->] (6,3.5) node[left,black] {$\ss 0$}--++(-45:3) node[right,black] {$*$};
\draw[lblack,line width= 4 pt,->] (6,4.5)node[left,black] {$\ss 0$}--++(-45:3.7) node[right,black] {$*$};
\draw[lblack,line width= 4 pt,->] (6,5.5) node[left,black] {$\ss 0$}--++(-45:4.4) node[right,black] {$*$};
\node at (3.5,-0.5) {$\ss m_{1}(\mu')$};
\node at (3.5,0.5) {$\vdots$};
\node at (3.5,1.5) {$\ss m_{n}(\mu')$};
\node at (0.5,-1.5) {$\ss m_{n}(\lambda')$};
\node at (1.5,-1.5) {$\dots$};
\node at (2.5,-1.5) {$\ss m_{1}(\lambda')$};
\draw[line width= 0.7 pt,->]  (-1,4.5) node[left] {$\ss x_{n}$}--(-0.7,4.5);
\node[left] at (-0.7,5.5) {$\vdots$};
\draw[line width= 0.7 pt,->]  (-1,6.5) node[left] {$\ss x_{1}$}--(-0.7,6.5);
\draw[line width= 0.7 pt,->]  (-1,-0.5) node[left] {$\ss y^{-1}_{1}$}--(-0.7,-0.5);
\node[left] at (-0.7,0.5) {$\vdots$};
\draw[line width= 0.7 pt,->]  (-1,1.5) node[left] {$\ss y^{-1}_{n}$}--(-0.7,1.5);
\draw[line width= 0.7 pt,->] (2.5,-2.5) node[below] {$\ss (u_{1},v_{1})$}--(2.5,-2.2);
\draw[line width= 0.7 pt] (1.5,-2.5) node[below] {$\dots$};
\draw[line width= 0.7 pt,->] (0.5,-2.5) node[below] {$\ss (u_{n},v_{n})$}--(0.5,-2.2);
}
\]

We have one final point to address. Observe that the weights used in the definition of $\GG^{(\mathbf{y},\mathbf{0})}_{\lambda}$ differ from the weights of the ``crosses'' by a factor of $y^{d}$. This leads to the following expansion:
\[
\GG^{\buv}_{\lambda}(x_{1},\dots,x_{n})\,=\,\sum_{\mu}\,e^{\lambda}_{\mu}(\mathbf{u},\mathbf{v},\mathbf{y})\,\left(\prod^{n}_{i=1}\,y^{\sum^{n}_{j=i}m^{c}_{j}(\mu)}_{i}\right)\,\GG^{(\mathbf{y},\mathbf{0})}_{\mu}(x_{1},\dots,x_{n})
\]

\begin{equation}
e^{\lambda}_{\mu}(\mathbf{u},\mathbf{v},\mathbf{y})\,:=\,
 \tikz{1}{
\foreach \x in {1,2,3}{
\draw[lgray,line width= 4 pt,->] (0,\x-0.5)--(3,\x-0.5);
};
\foreach \x in {1,2,3}{
\draw[lgray,line width= 4 pt,->] (\x-0.5,0)--(\x-0.5,3);
};
\node at (3.7,2.5) {$\ss m_{n}(\mu')$};
\node at (3.5,1.5) {$ \vdots$};
\node at (3.7,0.5) {$\ss m_{1}(\mu')$};
\node at (0.5,3.3) {$\ss 0$};
\node at (1.5,3.3) {$\dots$};
\node at (2.5,3.3) {$\ss 0$};
\node at (0.5,-0.5) {$\ss m_{n}(\lambda')$};
\node at (1.5,-0.5) {$\dots$};
\node at (2.5,-0.5) {$\ss m_{1}(\lambda')$};
\node at (-0.3,0.5) {$\ss 0$};
\node at (-0.3,1.5) {$\ss 0$};
\node at (-0.3,2.5) {$\ss 0$};
\node[left] at (-0.7,2.5) {$\ss y_{n}^{-1} \rightarrow$};
\node[left] at (-1,1.5) {$\vdots$};
\node[left] at (-0.7,0.5) {$\ss y_{1}^{-1} \rightarrow$};
\node[below] at (2.5,-1) {$\uparrow$};
\node[below] at (2.5,-1.5) {$\ss (u_{1},v_{1})$};
\node[below] at (1.5,-1.5) {$\dots$};
\node[below] at (0.5,-1) {$\uparrow$};
\node[below] at (0.5,-1.5) {$\ss (u_{n},v_{n})$};
}
\end{equation}
using the weights:
\begin{equation}
\tikz{0.9}{
\draw[lgray,line width=4pt,->] (-1,0) -- (1,0);
\draw[lgray,line width=4pt,->] (0,-1) -- (0,1);
\node[left] at (-1,0) {\tiny $b$};\node[right] at (1,0) {\tiny $d$};
\node[below] at (0,-1) {\tiny $a$};\node[above] at (0,1) {\tiny $c$};
\node[left] at (-1.5,0) {$y \rightarrow$};
\node[below] at (0,-1.4) {$\uparrow$};
\node[below] at (0,-1.9) {$(u,v)$};
}
=\,  \mathbf{1}_{a + b = c + d}\, \,\, y^{-d}\,
\sum_{p}\,{(uy^{-1};q)_{c-p}\,}\,{(vy;q)_{p}\,}(vy)^{b-p}\,\binom{c+d-p}{c-p}_{q}\,\binom{b}{p}_{q}
\end{equation}
In the final step, we integrate the factor $\left(\prod^{n}_{i=1}\,y^{\sum^{n}_{j=i}m_{j}(\mu')}_{i}\right)$ into the weights by multiplying $y^{a}$ where $a$ is bottom label of the vertex. This concludes our proof.
\end{proof}

\subsection{Degenerations}\label{subsec:Expansions_Degenerations}
\subsubsection{Expansion of Whittaker polynomials into Inhomogeneous $q$-Whittaker polynomials}
We specialise the above equation to the following $u_{i}=v_{i}=0$ and $y_{i}=1$ for all $i\in\mathbb{Z}_{> 0}$.

\begin{cor} Fix a positive integer $n$, and let $\lambda$ be a partition. Then $q$-Whittaker polynomials can be expanded in terms of Inhomogeneous $q$-Whittaker polynomials as follows:
\begin{equation}
W_{\lambda}(x_{1},\dots,x_{n})\,=\,\sum_{\mu}\,a_{\lambda,\mu}(q)\,\mathbb{F}_{\mu}(x_{1},\dots,x_{n})
\end{equation}  
where
\begin{equation}
a_{\lambda,\mu}(q)\,:=\,
 \tikz{1}{
\foreach \x in {1,2,3}{
\draw[lgray,line width= 4 pt,->] (0,\x-0.5)--(3,\x-0.5);
};
\foreach \x in {1,2,3}{
\draw[lgray,line width= 4 pt,->] (\x-0.5,0)--(\x-0.5,3);
};
\node at (3.7,2.5) {$\ss m_{n}(\mu')$};
\node at (3.5,1.5) {$ \vdots$};
\node at (3.7,0.5) {$\ss m_{1}(\mu')$};
\node at (0.5,3.3) {$\ss 0$};
\node at (1.5,3.3) {$\dots$};
\node at (2.5,3.3) {$\ss 0$};
\node at (0.5,-0.5) {$\ss m_{n}(\lambda')$};
\node at (1.5,-0.5) {$\dots$};
\node at (2.5,-0.5) {$\ss m_{1}(\lambda')$};
\node at (-0.3,0.5) {$\ss 0$};
\node at (-0.3,1.5) {$\ss 0$};
\node at (-0.3,2.5) {$\ss 0$};
}
\end{equation}
using the weights:
\begin{equation}
\tikz{0.9}{
\draw[lgray,line width=4pt,->] (-1,0) -- (1,0);
\draw[lgray,line width=4pt,->] (0,-1) -- (0,1);
\node[left] at (-1,0) {\tiny $b$};\node[right] at (1,0) {\tiny $d$};
\node[below] at (0,-1) {\tiny $a$};\node[above] at (0,1) {\tiny $c$};
}
=\,  \mathbf{1}_{a + b = c + d}\,\mathbf{1}_{ b \leq c}\,\binom{a}{c-b}_{q}\,
\end{equation}

Then \(a_{\lambda,\mu}(q)\in\mathbb{N}[q]\); equivalently, the \(q\)-Whittaker polynomials \(W_{\lambda}\) admit a positive expansion in terms of the inhomogeneous \(q\)-Whittaker polynomials \(\mathbb{F}_{\mu}\).
\end{cor}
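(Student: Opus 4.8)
The plan is to obtain the expansion as the $\mathbf u=\mathbf v=\mathbf 0$, $\mathbf y=(1,1,\dots)$ specialisation of Theorem~\ref{thm:general_expansion}, and then to deduce $a_{\lambda,\mu}(q)\in\mathbb{N}[q]$ from the explicit form of the specialised vertex weights. For the first part I would set $u_i=v_i=0$ and $y_i=1$ for all $i$. The left-hand side $\GG^{(\mathbf 0,\mathbf 0)}_\lambda$ equals the $q$-Whittaker polynomial $W_\lambda$: the single-variable branching of $\GG^{(\mathbf 0,\mathbf 0)}_{\lambda/\mu}$ was matched in \eqref{eq:reduction_spinL_Whitt} with the branching rule for $W_{\lambda/\mu}$, and since both families satisfy the chain decomposition $F_{\lambda/\nu}(x_1,\dots,x_n)=\sum F_{\mu^{(n)}/\mu^{(n-1)}}(x_n)\cdots F_{\mu^{(1)}/\mu^{(0)}}(x_1)$ inherited from their transfer matrices, equality of one-variable skews forces equality in all variables. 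Similarly, each $\GG^{((1,\dots,1),\mathbf 0)}_\mu$ on the right is the inhomogeneous $q$-Whittaker polynomial $\mathbb{F}_\mu$ (at $a_i=0$, and with the normalising prefactor noted in \eqref{eq:reduction_spinL_dual_inWhit} absorbed). This produces $W_\lambda=\sum_\mu a_{\lambda,\mu}(q)\,\mathbb{F}_\mu$ with $a_{\lambda,\mu}(q)=d_{\lambda,\mu}(\mathbf 0,\mathbf 0,(1,1,\dots))$.

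Next I would compute the specialised vertex weight. Putting $u=v=0$ and $y=1$ in the weight of $d_{\lambda,\mu}$, the Pochhammer factors become $(uy^{-1};q)_{c-p}=(0;q)_{c-p}=1$ and $(vy;q)_p=(0;q)_p=1$, the prefactor $y^{a-d}$ becomes $1$, and the monomial $(vy)^{b-p}=0^{\,b-p}$ annihilates every term of the $p$-sum except $p=b$; because $0\le p\le\min(b,c)$, a surviving term exists only when $b\le c$. For $p=b$ one has $\binom{b}{p}_q=1$ and $\binom{c+d-p}{c-p}_q=\binom{a}{c-b}_q$ using $a=c+d-b$, so the weight collapses to $\mathbf 1_{a+b=c+d}\,\mathbf 1_{b\le c}\,\binom{a}{c-b}_q$, which is precisely the weight in the statement; hence $a_{\lambda,\mu}(q)$ is the partition function of that model.

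For positivity, note that $a_{\lambda,\mu}(q)$ is a finite sum over lattice configurations of products of vertex weights --- finite because conservation makes the total particle number equal to the number $\sum_i m_i(\lambda')=\lambda_1$ entering from the bottom (so in particular $a_{\lambda,\mu}(q)=0$ unless $\mu_1=\lambda_1$ and $\ell(\mu)\le n$). Each nonzero weight is a Gaussian binomial $\binom{a}{c-b}_q\in\mathbb{N}[q]$ (for instance via $\binom{m}{k}_q=\binom{m-1}{k-1}_q+q^k\binom{m-1}{k}_q$, or because it is the area generating function of partitions inside a $(c-b)\times(a-c+b)$ box), and $\mathbb{N}[q]$ is closed under addition and multiplication, so $a_{\lambda,\mu}(q)\in\mathbb{N}[q]$; this is the asserted positivity. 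The only step that is not a routine substitution is the identification in the first paragraph of $\GG^{(\mathbf 0,\mathbf 0)}_\lambda$ and $\GG^{((1,\dots,1),\mathbf 0)}_\mu$ with $W_\lambda$ and $\mathbb{F}_\mu$ at the level of full polynomials, together with bookkeeping of the normalising factor in \eqref{eq:reduction_spinL_dual_inWhit}; once the branching identifications of \S\ref{subsec:qWhit-degenerations} are in hand, the remainder is immediate.
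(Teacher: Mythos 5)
Your proposal is correct and follows essentially the same route as the paper, which obtains this corollary precisely by specialising Theorem~\ref{thm:general_expansion} at $u_i=v_i=0$, $y_i=1$ and reading off the collapsed vertex weight (the $p$-sum surviving only at $p=b$, giving $\mathbf 1_{b\le c}\binom{a}{c-b}_q$), with positivity immediate from the weights lying in $\mathbb{N}[q]$. You in fact supply more detail than the paper does on the weight computation and on the identification of the specialised $\GG$'s with $W_\lambda$ and $\mathbb{F}_\mu$ via the branching formulas of \S\ref{subsec:qWhit-degenerations}.
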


This result generalises Lenart's formula~\cite[Theorem 2.7]{Lenart2000} for the expansion of Schur polynomials in terms of Grothendieck polynomials.

\begin{ex}
We illustrate the expansion of a $q$-Whittaker polynomial into the basis of inhomogeneous $q$-Whittaker polynomials $\mathbb{F}_\lambda$. 
Consider the partition $\lambda=(2,0)$ and two variables $x_1, x_2$. 
The $q$-Whittaker polynomial is given by
\[
W_{(2,0)}
  = x_{1}^{2} + (1+q)\,x_{1}x_{2} + x_{2}^{2}.
\]
The inhomogeneous counterparts are computed as:
\begin{align*}
\mathbb{F}_{(2,0)}
  &= x_{1}^{2}
     + (1+q)x_{1}x_{2}+x_{2}^{2}-(1+q)(x^{2}_{1}x_{2}+x_{1}x^{2}_{2})+q x_{1}^{2}x_{2}^{2}, \\
\mathbb{F}_{(2,1)}
  &= x_{1}^{2}x_{2}
     + x_{1}x_{2}^{2}-x^{2}_{1}x_{2}^{2}, \\
\mathbb{F}_{(2,2)}
  &= x_{1}^{2}x_{2}^{2}.
\end{align*}
Consequently, the expansion reads
\[
W_{(2,0)}
   = \mathbb{F}_{(2,0)}
     + (1+q)\,\mathbb{F}_{(2,1)}
     + \mathbb{F}_{(2,2)}.
\]
\end{ex}

\subsubsection{Expansion of Inhomogeneous $q$-Whittaker polynomials into $q$-Whittaker polynomials}
We specialise the above equation to the following $v_{i}=0$ and $y_{i}=0$ for all $i\in \mathbb{Z}_{>0}$.

\begin{cor} Fix a positive integer $n$, and let $\lambda$ be a partition. Then $q$-Whittaker polynomials can be expanded in terms of Inhomogeneous $q$-Whittaker polynomials as follows:
\begin{equation}
\mathbb{F}_{\lambda}(x_{1},\dots,x_{n})\,=\,\sum_{\mu}\,b_{\lambda,\mu}(q)\,{W}_{\mu}(x_{1},\dots,x_{n})
\end{equation}  
where
\begin{equation}
b_{\lambda,\mu}(q)\,:=\,
 \tikz{1}{
\foreach \x in {1,2,3}{
\draw[lgray,line width= 4 pt,->] (0,\x-0.5)--(3,\x-0.5);
};
\foreach \x in {1,2,3}{
\draw[lgray,line width= 4 pt,->] (\x-0.5,0)--(\x-0.5,3);
};
\node at (3.7,2.5) {$\ss m_{n}(\mu')$};
\node at (3.5,1.5) {$ \vdots$};
\node at (3.7,0.5) {$\ss m_{1}(\mu')$};
\node at (0.5,3.3) {$\ss 0$};
\node at (1.5,3.3) {$\dots$};
\node at (2.5,3.3) {$\ss 0$};
\node at (0.5,-0.5) {$\ss m_{n}(\lambda')$};
\node at (1.5,-0.5) {$\dots$};
\node at (2.5,-0.5) {$\ss m_{1}(\lambda')$};
\node at (-0.3,0.5) {$\ss 0$};
\node at (-0.3,1.5) {$\ss 0$};
\node at (-0.3,2.5) {$\ss 0$};
}
\end{equation}
using the weights:
\begin{equation}
\tikz{0.9}{
\draw[lgray,line width=4pt,->] (-1,0) -- (1,0);
\draw[lgray,line width=4pt,->] (0,-1) -- (0,1);
\node[left] at (-1,0) {\tiny $b$};\node[right] at (1,0) {\tiny $d$};
\node[below] at (0,-1) {\tiny $a$};\node[above] at (0,1) {\tiny $c$};
}
=\, \mathbf{1}_{a + b = c + d}\,\mathbf{1}_{ b \leq c}\,(-1)^{c-b}q^{\binom{c-b}{2}}\,\binom{a}{c-b}_{q}.\,
\end{equation}
Furthermore, we can conclude that $(-1)^{|\lambda|-|\mu|}b_{\lambda,\mu}(q)\in \mathbb{N}[q]$.
\end{cor}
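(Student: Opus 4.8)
The plan is to obtain this corollary as a specialisation of Theorem~\ref{thm:general_expansion}, and then read off the sign pattern from the shape of the specialised vertex weights.

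First I would set $u_{i}=1$ and $v_{i}=y_{i}=0$ for all $i$ in Theorem~\ref{thm:general_expansion}. With $v_{i}=0$ and $u_{i}=1$ the left-hand side $\GG^{\buv}_{\lambda}$ becomes $\GG^{((1,\dots,1),\mathbf{0})}_{\lambda}=\mathbb{F}_{\lambda}$, the inhomogeneous $q$-Whittaker polynomial (as identified in \S\ref{subsec:qWhit-degenerations} and used in the previous corollary), while putting $y_{i}=0$ collapses the right-hand factor $\GG^{(\mathbf{y},\mathbf{0})}_{\mu}$ to $\GG^{(\mathbf{0},\mathbf{0})}_{\mu}=W_{\mu}$, the $q$-Whittaker degeneration. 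So $b_{\lambda,\mu}(q)=d_{\lambda,\mu}((1,\dots,1),\mathbf{0},\mathbf{0})$, and it only remains to check the specialised weights. Setting $v=0$ in the weight of Theorem~\ref{thm:general_expansion} annihilates $(vy;q)_{p}$ and forces $(vy)^{b-p}$ to vanish unless $p=b$; together with $0\le p\le\min(b,c)$ this collapses the $p$-sum to the single term $p=b$, which occurs only when $b\le c$. Using $c+d-p=a$, that term is $y^{a-d}(uy^{-1};q)_{c-b}\binom{a}{c-b}_{q}$, and since conservation gives $a-d=c-b$ we may rewrite
\[
y^{a-d}\,(uy^{-1};q)_{c-b}\;=\;\prod_{j=0}^{c-b-1}\bigl(y-uq^{j}\bigr),
\]
which is a polynomial in $y$, so the specialisation $y\to0$ is harmless; at $y=0$ and $u=1$ it equals $\prod_{j=0}^{c-b-1}(-q^{j})=(-1)^{c-b}q^{\binom{c-b}{2}}$. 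This produces precisely the weight $\mathbf{1}_{a+b=c+d}\,\mathbf{1}_{b\le c}\,(-1)^{c-b}q^{\binom{c-b}{2}}\binom{a}{c-b}_{q}$ of the statement, and the expansion $\mathbb{F}_{\lambda}=\sum_{\mu}b_{\lambda,\mu}(q)W_{\mu}$ with $b_{\lambda,\mu}(q)$ that partition function.

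For the positivity I would factor each vertex weight as $(-1)^{c-b}$ times $q^{\binom{c-b}{2}}\binom{a}{c-b}_{q}\in\mathbb{N}[q]$ (Gaussian binomials being in $\mathbb{N}[q]$), so each lattice configuration $\sigma$ contributes $(-1)^{\sum_{v}(c_{v}-b_{v})}$ times an element of $\mathbb{N}[q]$; the whole point then reduces to showing that the exponent $\sum_{v}(c_{v}-b_{v})$ is constant over all configurations of nonzero weight, equal to $|\mu|-|\lambda|$. Since the weight vanishes unless $b\le c$ and $c-b\le a$, any such $\sigma$ is realised by the routing that sends, at every vertex, all $b$ horizontally-incoming particles and $c-b$ of the $a$ vertically-incoming ones upward and the remaining $d=a-(c-b)$ vertically-incoming particles to the right; this turns $\sigma$ into a superposition of monotone North--East lattice paths from the bottom boundary to the right boundary. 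A path entering the column carrying the label $m_{i}(\lambda')$ and leaving through the row carrying the label $m_{r}(\mu')$ occupies exactly $r-1$ internal vertical edges and exactly $i-1$ internal horizontal edges, so $\sum_{v}c_{v}=\sum_{p}(r_{p}-1)$, $\sum_{v}b_{v}=\sum_{p}(i_{p}-1)$, and hence $\sum_{v}(c_{v}-b_{v})=\sum_{p}r_{p}-\sum_{p}i_{p}$; as the exit rows run (with multiplicity) over the column sizes of $\mu$ and the entered-column labels over the column sizes of $\lambda$, this equals $|\mu|-|\lambda|$. Because $\sum_{v}(c_{v}-b_{v})$ depends only on $\sigma$ and not on the chosen routing, it equals $|\mu|-|\lambda|$ for every contributing $\sigma$, whence $b_{\lambda,\mu}(q)=(-1)^{|\mu|-|\lambda|}$ times an element of $\mathbb{N}[q]$, \ie $(-1)^{|\lambda|-|\mu|}b_{\lambda,\mu}(q)\in\mathbb{N}[q]$.

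The specialisation and the weight computation are mechanical; the only step with genuine content is the constancy of $\sum_{v}(c_{v}-b_{v})$, and I expect the bookkeeping behind the North--East path decomposition — in particular matching the entered-column and exit-row multiplicities with the column sizes of $\lambda$ and $\mu$, which hinges on the ordering of the boundary labels $m_{\bullet}(\lambda')$ and $m_{\bullet}(\mu')$ — to be where care is needed. One can also avoid paths entirely: telescoping the columns gives $\sum_{v}c_{v}-\sum_{v}a_{v}=-\lambda_{1}$ because the top boundary is empty, admissibility forces $\sum_{v}d_{v}=|\lambda|$ and $\sum_{v}c_{v}=|\mu|-\mu_{1}$, and conservation (which also gives $\mu_{1}=\lambda_{1}$) then yields $\sum_{v}(c_{v}-b_{v})=|\mu|-|\lambda|$. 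A small two-variable check (the case $\lambda=(2)$, $n=2$, where $\mathbb{F}_{(2)}=W_{(2)}-(1+q)\,W_{(2,1)}+q\,W_{(2,2)}$) confirms the sign pattern.
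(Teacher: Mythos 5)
Your proposal is correct and follows essentially the same route as the paper: specialise Theorem~\ref{thm:general_expansion} at $u_i=1$, $v_i=y_i=0$ (collapsing the $p$-sum to $p=b$ and evaluating $y^{a-d}(uy^{-1};q)_{c-b}$ at $y=0$), and then determine the global sign by counting how many internal top (resp.\ left) edges the particles cross, giving $\sum_v c_v-\sum_v b_v=(|\mu|-\mu_1)-(|\lambda|-\lambda_1)=|\mu|-|\lambda|$. If anything, your treatment of the weight degeneration and of the sign count — in particular the explicit appeal to $\lambda_1=\mu_1$ forced by conservation — is more careful than the paper's, which compresses both steps.
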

\begin{proof}
 The result immediately follows by setting $v_{i}=y_{i}=0$ and $u_{i}=1$ for all $i\in\mathbb{Z}_{>0}$ in Theorem~\ref{thm:general_expansion}. 

 \medskip
We are therefore left to prove that 
\[
(-1)^{|\lambda|-|\mu|}\,b_{\lambda,\mu}(q)\in \mathbb{N}[q].
\]
Observe that the only source of a negative sign is the factor $(-1)^{\,c-b}$ appearing in the Boltzmann weights.  
Thus, it suffices to compute the statistic $c-b$ for every vertex in a given configuration.

We begin with the statistic obtained by summing the top labels of all vertices.  
Consider the bottom row: in this row, $n$ particles enter from the bottom and $m_{1}(\mu')$ exit through the right boundary.  
Hence the sum of the top labels over all vertices in the bottom row is
\[
n - m_{1}(\mu').
\]
By the same argument, the sum of the top labels in the $i$-th row is
\[
n - \sum_{j=1}^{i} m_{j}(\mu').
\]
Therefore, the total sum of the top labels over all rows is
\[
n^{2}-\sum_{i=1}^{n}(n+1-i)
   = -n+\sum_{i=1}^{n} i\,m_{i}(\mu')
   = -n + |\mu|,
\]
where we used the identity \(\sum_{i=1}^{n} m_{i}(\mu') = n\).

By repeating the same argument, we find that the statistic obtained by summing the labels on the left edges of all vertices is equal to $-n + |\lambda|$.  
We may therefore conclude that the overall sign is
\[
(-n + |\mu|)\;-\;(-n + |\lambda|)\;=\;|\mu| - |\lambda|.
\]

\end{proof}
\begin{ex}
We illustrate the expansion of a inhomogeneous $q$-Whittaker polynomial $\mathbb{F}_{\lambda}$ into the basis of $q$-Whittaker polynomials $W_\lambda$. 
Consider the partition $\lambda=(2,0)$ and two variables $x_1, x_2$. 
The inhomogeneous $q$-Whittaker polynomial is given by
\[
\mathbb{F}_{(2,0)}
  = x_{1}^{2}
     + (1+q)x_{1}x_{2}(1-x_{1})
     + x_{2}^{2}(1-x_{1})(1-qx_{1}).
\]
The $q$-Whittaker polynomials are computed as:
\begin{align*}
W_{(2,0)}
  &= x_{1}^{2}
     + (1+q)x_{1}x_{2}
     + x_{2}^{2}, \\
W_{(2,1)}
  &= x_{1}^{2}x_{2}
     + x_{1}x_{2}^{2}, \\
W_{(2,2)}
  &= x_{1}^{2}x_{2}^{2}.
\end{align*}
Consequently, the expansion reads
\[
\mathbb{F}_{(2,0)}
   = W_{(2,0)}-(1+q)\,W_{(2,1)}
     +q W_{(2,2)}.
\]
\end{ex}

\subsubsection{Expansion of dual Inhomogeneous $q$-Whittaker polynomials into $q$-Whittaker polynomials}
We specialise the above equation to the following $\mathbf{u}=\mathbf{0}$ and $v_{i}=1$ and $y_{i}=0$ for all $i\in\mathbb{Z}_{>0}$.

\begin{cor} Fix a positive integer $n$, and let $\lambda$ be a partition. Then dual inhomogeneous $q$-Whittaker polynomials expand positively in terms of $q$-Whittaker polynomials as follows:
\begin{equation}
\mathbb{G}_{\lambda}(x_{1},\dots,x_{n})\,=\,\sum_{\mu}\,c_{\lambda,\mu}(q)\,{W}_{\mu}(x_{1},\dots,x_{n})
\end{equation}  
where
\begin{equation}
c_{\lambda,\mu}(q)\,:=\,
 \tikz{1}{
\foreach \x in {1,2,3}{
\draw[lgray,line width= 4 pt,->] (0,\x-0.5)--(3,\x-0.5);
};
\foreach \x in {1,2,3}{
\draw[lgray,line width= 4 pt,->] (\x-0.5,0)--(\x-0.5,3);
};
\node at (3.7,2.5) {$\ss m_{n}(\mu')$};
\node at (3.5,1.5) {$ \vdots$};
\node at (3.7,0.5) {$\ss m_{1}(\mu')$};
\node at (0.5,3.3) {$\ss 0$};
\node at (1.5,3.3) {$\dots$};
\node at (2.5,3.3) {$\ss 0$};
\node at (0.5,-0.5) {$\ss m_{n}(\lambda')$};
\node at (1.5,-0.5) {$\dots$};
\node at (2.5,-0.5) {$\ss m_{1}(\lambda')$};
\node at (-0.3,0.5) {$\ss 0$};
\node at (-0.3,1.5) {$\ss 0$};
\node at (-0.3,2.5) {$\ss 0$};
}
\end{equation}
using the weights:
\begin{equation}
\tikz{0.9}{
\draw[lgray,line width=4pt,->] (-1,0) -- (1,0);
\draw[lgray,line width=4pt,->] (0,-1) -- (0,1);
\node[left] at (-1,0) {\tiny $b$};\node[right] at (1,0) {\tiny $d$};
\node[below] at (0,-1) {\tiny $a$};\node[above] at (0,1) {\tiny $c$};
}
=\, \mathbf{1}_{a + b = c + d}\,\mathbf{1}_{ c \leq b}\,\binom{b}{c}_{q}\,
\end{equation}
\end{cor}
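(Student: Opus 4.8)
The plan is to deduce the corollary from Theorem~\ref{thm:general_expansion} by a single specialisation, followed by an elementary positivity remark. First I would apply the identity $\GG^{\buv}_{\lambda}=\sum_{\mu}d_{\lambda,\mu}(\mathbf{u},\mathbf{v},\mathbf{y})\,\GG^{(\mathbf{y},\mathbf{0})}_{\mu}$ at the values $\mathbf{u}=\mathbf{0}$, $v_{i}=1$ for all $i$, and $\mathbf{y}=\mathbf{0}$. On the left-hand side this yields $\GG^{(\mathbf{0},(1,\dots,1))}_{\lambda}=\mathbb{G}_{\lambda}$ by definition of the dual inhomogeneous $q$-Whittaker polynomials, and on the right-hand side $\GG^{(\mathbf{0},\mathbf{0})}_{\mu}=W_{\mu}$ by the $q$-Whittaker degeneration of Section~\ref{subsec:qWhit-degenerations}. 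Thus the identity becomes $\mathbb{G}_{\lambda}=\sum_{\mu}c_{\lambda,\mu}(q)\,W_{\mu}$ with $c_{\lambda,\mu}(q):=d_{\lambda,\mu}(\mathbf{0},(1,\dots,1),\mathbf{0})$, and it remains to identify the specialised vertex weight and to establish the positivity.

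For the weight, I would start from $\delta_{a+b=c+d}\,y^{a-d}\sum_{p}(uy^{-1};q)_{c-p}\,(vy;q)_{p}\,(vy)^{b-p}\binom{c+d-p}{c-p}_{q}\binom{b}{p}_{q}$, summed over $0\le p\le\min(b,c)$. Setting $u=0$ makes every factor $(uy^{-1};q)_{c-p}$ equal to $1$ and removes the only occurrence of $y^{-1}$; setting $v=1$ leaves $(y;q)_{p}$ and combines the powers of $y$ into $y^{a-d}\cdot y^{b-p}=y^{c-p}$ using conservation $a+b=c+d$; finally setting $y=0$ uses $(y;q)_{p}|_{y=0}=1$ and $y^{c-p}|_{y=0}=\mathbf{1}_{p=c}$, so only the summand $p=c$ survives, which lies in the admissible range $0\le p\le\min(b,c)$ exactly when $c\le b$, and it equals $\binom{d}{0}_{q}\binom{b}{c}_{q}=\binom{b}{c}_{q}$. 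Hence the specialised weight is $\mathbf{1}_{a+b=c+d}\,\mathbf{1}_{c\le b}\binom{b}{c}_{q}$, as claimed. I would also remark that the specialisation is legitimate because, after the substitutions $u=0$, $v=1$, the weight is a genuine polynomial in $y$, so no pole arises at $y=0$.

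For the positivity $c_{\lambda,\mu}(q)\in\mathbb{N}[q]$: the $n\times n$ lattice defining $c_{\lambda,\mu}(q)$ has all boundary edges fixed, and conservation bounds every interior label, so $c_{\lambda,\mu}(q)$ is a finite sum over configurations of products of vertex weights; each such weight is either $0$ or a Gaussian binomial coefficient $\binom{b}{c}_{q}$, which is a polynomial in $q$ with non-negative integer coefficients (the generating function of partitions inside a $c\times(b-c)$ box). A finite sum of products of elements of $\mathbb{N}[q]$ remains in $\mathbb{N}[q]$, which gives the claim. I do not anticipate a real obstacle here: the statement is a bookkeeping specialisation of the already-proved Theorem~\ref{thm:general_expansion}, and the positivity is immediate from non-negativity of Gaussian binomials; the only point needing care is the pole-free check in the weight computation, which is handled by tracking the powers of $y$ as above.
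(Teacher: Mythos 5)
Your proposal is correct and follows exactly the paper's route: the paper likewise obtains this corollary as the specialisation $\mathbf{u}=\mathbf{0}$, $v_{i}=1$, $y_{i}=0$ of Theorem~\ref{thm:general_expansion}, and your computation that only the $p=c$ summand survives (forcing $c\le b$ and leaving $\binom{b}{c}_{q}$) is the right verification of the specialised weight. Your added remarks on pole-freeness at $y=0$ and on positivity via non-negativity of Gaussian binomials are correct and, if anything, supply detail the paper leaves implicit.
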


\begin{ex}
We illustrate the expansion of a dual inhomogeneous $q$-Whittaker polynomial $\mathbb{G}_{\lambda}$ into the basis of $q$-Whittaker polynomials $W_\lambda$. 
Consider the partition $\lambda=(2,2)$ and two variables $x_1, x_2$. 
The dual inhomogeneous $q$-Whittaker polynomial is given by
\[
\mathbb{G}_{2,2}\,=\, x_1^2 x_2^2 +(1+q) x_1^2 x_2 +(1+q)x_1 x_2^2 +x_1^2+(1+q) x_1 x_2 +x_2^2
\]

Then the expansion reads
\[
\mathbb{G}_{(2,2)}
   = W_{(2,2)}+(1+q)\,W_{(2,1)}
     + W_{(2,0)}.
\]
\end{ex}

\appendix
\section{Proof of prop~\ref{prop:RLL_spinL_vs_SpinL_}}\label{sec:appendix-proof}
\subsection{Fusion procedure}
We recall the fusion procedure in integrable lattice models~\cite{fusion1,fusion2}, 
following the conventions of~\cite[Section~4]{spin-BW2021}. 
For our purposes, this procedure describes how to obtain higher-spin weights 
\( W_{\l,\m} \) from the fundamental weights \( W_{1,1} \). 
In what follows, we illustrate the procedure in a special case, 
showing explicitly how the weights \( W_{\l,\m} \) 
from~\ref{theorem:weightsoftheYBE} are related to the weights 
\( W_{1,\m} \) in~\ref{weights:spin1_row_weights}:

\begin{multline}
W_{\l,\m}\left(\dfrac{x}{y};q;i,j,k,\ell\right)=\\
\tikz{0.7}{
\draw[lgray,line width=4pt,->] (-1,0) -- (1,0);
\draw[lgray,line width=4pt,->] (0,-1) -- (0,1);
\node[left] at (-1,0) {\tiny $j$};\node[right] at (1,0) {\tiny $\ell$};
\node[below] at (0,-1) {\tiny $i$};\node[above] at (0,1) {\tiny $k$};
\draw[->](-2.5,0) node[left] {\tiny $(x,\l)$}--(-2,0);
\draw[->](0,-2.5) node[below] {\tiny $(y,\m)$}--(0,-2);
}\,
=\sum_{\substack{0 \leq a_{1},\dots,a_{\l} \leq 1:|a| = j\leq \l \\[4pt]
0 \leq b_{1},\dots,b_{\l} \leq 1: |b| = \ell\leq \l}}\,\dfrac{q^{\sum^{\l}_{m=1} (m-1)a_{m}}}{Z_{j}(\l)}\times
\tikz{0.7}{
\draw[lgray,line width=1pt,->] (-1,0) -- (1,0);
\draw[lgray,line width=1pt,->] (-1,1) -- (1,1);
\draw[lgray,line width=1pt,->] (-1,2) -- (1,2);
\draw[lgray,line width=4pt,->] (0,-1) -- (0,3);
\node[left] at (-1,0) {\tiny $a_{1}$};\node[right] at (1,0) {\tiny $b_{1}$};
\node[left] at (-1,1) {\tiny $\vdots$};\node[right] at (1,1) {\tiny $\vdots$};
\node[left] at (-1,2) {\tiny $a_{\l}$};\node[right] at (1,2) {\tiny $b_{\l}$};
\node[below] at (0,-1) {\tiny $i$};\node[above] at (0,3) {\tiny $k$};
\draw[->](-2.5,0) node[left] {\tiny $x$}--(-2,0);
\draw[->](-2.5,1) node[left] {\tiny $ xq^{i-1}$}--(-2,1);
\draw[->](-2.5,2) node[left] {\tiny $xq^{\l-1}$}--(-2,2);
\draw[->](0,-2.5) node[below] {\tiny $(y,\m)$}--(0,-2);
}
\end{multline}
where:
\[
Z_{j}(\l)=\sum_{0\leq c_{1},\dots,c_{\l}\leq 1:|c|=j}\,q^{\sum^{\l}_{m=1}(m-1)c_{m}}=q^{j(j-1)/2}\,\dfrac{(q;q)_{\l}}{(q;q)_{j}(q;q)_{\l-j}}
\]

Furthermore, we have the following theorem which allows one to compute the partition function of single row lattice with the weights $W_{\l,\m}$ as follows:
\begin{thm}[]
   For any $N\geq 1$. $0\leq j,\ell \leq \l$ and $\{i_{0},\dots,i_{N}\}$, $\{k_{0},\dots,k_{N}\}$ such that each $i_{n}$ and $k_{n}$ $\in \{0,\dots,\m\}$ for all $0\leq n\leq N$, one has the following equality: 
\begin{multline}
\tikz{0.7}{
\draw[lgray,line width=4pt,->] (-1,0) -- (3,0);
\draw[lgray,line width=4pt,->] (0,-1) -- (0,1);
\draw[lgray,line width=4pt,->] (1,-1) -- (1,1);
\draw[lgray,line width=4pt,->] (2,-1) -- (2,1);
\node[left] at (-1,0) {\tiny $j$};
\node[right] at (3,0) {\tiny $\ell$};
\node[below] at (0,-1) {\tiny $i_{0}$};
\node[below] at (1,-1) {\tiny \dots};
\node[below] at (2,-1) {\tiny $i_{N}$};
\node[above] at (0,1) {\tiny $k_{0}$};
\node[above] at (1,1) {\tiny \dots};
\node[above] at (2,1) {\tiny $k_{N}$};
\draw[->](-2.5,0) node[left] {\tiny $(x,\l)$}--(-2,0);
\draw[->](0,-2.5) node[below] {\tiny $(y_{0},\m)$}--(0,-2);
\draw[->](2,-2.5) node[below] {\tiny $(y_{N},\m)$}--(2,-2);
}\,\\
=\sum_{\substack{0 \leq a_{1},\dots,a_{\l} \leq 1:|a| = j\leq \l \\[4pt]
0 \leq b_{1},\dots,b_{\l} \leq 1: |b| = \ell\leq \l}}\,\dfrac{q^{\sum^{\l}_{m=1} (m-1)a_{m}}}{Z_{j}(\l)}\times
\tikz{0.7}{
\draw[lgray,line width=1pt,->] (-1,0) -- (3,0);
\draw[lgray,line width=1pt,->] (-1,1) -- (3,1);
\draw[lgray,line width=1pt,->] (-1,2) -- (3,2);
\draw[lgray,line width=4pt,->] (0,-1) -- (0,3);
\draw[lgray,line width=4pt,->] (1,-1) -- (1,3);
\draw[lgray,line width=4pt,->] (2,-1) -- (2,3);
\node[left] at (-1,0) {\tiny $a_{1}$};\node[right] at (3,0) {\tiny $b_{1}$};
\node[left] at (-1,1) {\tiny $\vdots$};\node[right] at (3,1) {\tiny $\vdots$};
\node[left] at (-1,2) {\tiny $a_{\l}$};\node[right] at (3,2) {\tiny $b_{\l}$};
\node[below] at (0,-1) {\tiny $i$};\node[above] at (0,3) {\tiny $k$};
\draw[->](-2.5,0) node[left] {\tiny $ x$}--(-2,0);
\draw[->](-2.5,2) node[left] {\tiny $xq^{\l-1}$}--(-2,2);
\draw[->](0,-2.5) node[below] {\tiny $ (y_{0},\m)$}--(0,-2);
\draw[->](2,-2.5) node[below] {\tiny $ (y_{N},\m)$}--(2,-2);
}
\end{multline}
\end{thm}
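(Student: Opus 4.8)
The plan is to prove the identity by induction on $N$, with the base case $N=0$ being precisely the single-vertex fusion formula recalled immediately above. For the inductive step I would peel off the rightmost fused vertex and insert a summation over the internal spin-$\l$ horizontal edge $\ell'$ separating it from the block of the remaining $N$ fused vertices. Applying the single-vertex formula to the rightmost vertex and the inductive hypothesis to the block expresses the left-hand side as a sum of products of two thin-lattice partition functions, glued along their common thin horizontal edges $b=(b_{1},\dots,b_{\l})\in\{0,1\}^{\l}$ with $|b|=\ell'$. The subtlety is that this gluing carries a spurious weight $q^{\sum_{m}(m-1)b_{m}}/Z_{\ell'}(\l)$ on the internal configuration $b$, so the essential task is to show that this factor is absorbed -- equivalently, that after passing the fused boundary state through one thick column it is again a (suitably weighted) fused state.

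I would make this precise in operator language. Set $\phi_{j} := \sum_{|a|=j} \dfrac{q^{\sum_{m}(m-1)a_{m}}}{Z_{j}(\l)}\,\ket{a}$ and $\psi_{s} := \sum_{|b|=s}\ket{b}$ in $(\mathbb{C}^{2})^{\otimes\l}$; the very definition of $Z_{s}(\l)$ gives the orthogonality $\langle\psi_{s},\phi_{t}\rangle=\delta_{s,t}$. Writing $\tau_{n}$ for the transfer matrix of the $\l$-row, single-thick-column thin lattice with bottom label $i_{n}$, top label $k_{n}$, row rapidities $x,xq,\dots,xq^{\l-1}$ and column parameters $(y_{n},\m)$, the single-vertex formula reads $\langle\psi_{t},\tau_{n}\phi_{s}\rangle=W_{\l,\m}(x/y_{n};q;s,t,i_{n},k_{n})$. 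The key step is to upgrade this to the full operator identity
\[
\tau_{n}\,\phi_{s}\;=\;\sum_{t=0}^{\l} W_{\l,\m}(x/y_{n};q;s,t,i_{n},k_{n})\,\phi_{t},
\]
i.e. that $\tau_{n}$ preserves the span of the fused states $\{\phi_{t}\}_{t=0}^{\l}$. Granting this, composing over the $N+1$ columns gives $\tau_{N}\cdots\tau_{0}\,\phi_{j}=\sum_{t}W^{(N+1)}_{\l,\m}(j\to t)\,\phi_{t}$, where $W^{(N+1)}_{\l,\m}$ is the matrix product of the fused weights along the row -- which is precisely the partition function of the row of $N+1$ fused vertices on the left-hand side. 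Pairing with $\psi_{\ell}$ on the right and using $\langle\psi_{\ell},\phi_{t}\rangle=\delta_{\ell,t}$, both sides of the theorem are recovered: the left-hand side as $W^{(N+1)}_{\l,\m}(j\to\ell)$, and the right-hand side as the expansion of $\langle\psi_{\ell},\tau_{N}\cdots\tau_{0}\,\phi_{j}\rangle$ into thin-lattice configurations.

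The remaining and main obstacle is the boxed operator identity, which is the structural content of the fusion procedure. The fused states $\{\phi_{t}\}_{t=0}^{\l}$ form a basis of the image of the rank-$(\l+1)$ fusion projector $P^{+}$ built from the $W_{1,1}$ $R$-matrices at the geometric points $1,q,\dots,q^{\l-1}$, and $P^{+}$ commutes with the thick $W_{1,\m}$ column by repeated application of the Yang--Baxter equation~\eqref{eq:YBE} in the spins $(1,1,\m)$; hence $\tau_{n}$ maps $\mathrm{Im}(P^{+})$ into itself. Since $\langle\psi_{t},\,\cdot\,\rangle$ restricted to $\mathrm{Im}(P^{+})$ is the basis dual to $\{\phi_{s}\}$, the vector $\tau_{n}\phi_{s}\in\mathrm{Im}(P^{+})$ is determined by its pairings against the $\psi_{t}$, which the single-vertex formula already computes -- giving the claim. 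At the level of detail appropriate here I would invoke~\cite[Section~4]{spin-BW2021} for the fact that $P^{+}$ commutes with the column and that $\{\phi_{t}\}$ is a basis of its image, and carry out in full only the short linear-algebra deduction above together with the straightforward composition over the $N+1$ columns.
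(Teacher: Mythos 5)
The paper does not prove this statement: it is recalled in the appendix as part of the standard fusion procedure, with the burden carried by the citations to the fusion literature and to~\cite[Section~4]{spin-BW2021}. Your proposal supplies an actual proof, and it is the standard correct one: the orthogonality $\langle\psi_s,\phi_t\rangle=\delta_{s,t}$ follows immediately from the definition of $Z_j(\l)$; the boxed operator identity (invariance of the span of the fused states under the thick column) is exactly the content of the fusion projector $P^{+}$ commuting with the column via repeated use of the Yang--Baxter equation at spins $(1,1,\m)$; and once that is granted, the composition over the $N+1$ columns together with the dual-basis pairing against $\psi_\ell$ reproduces both sides of the identity. The only quibbles are cosmetic: your opening induction on $N$ is subsumed by (and rendered redundant by) the operator-language argument that follows, and your argument ordering in $W_{\l,\m}(x/y_{n};q;s,t,i_{n},k_{n})$ does not match the paper's convention $(a,b,c,d)=(\text{bottom},\text{left},\text{top},\text{right})$, which would read $W_{\l,\m}(x/y_{n};q;i_{n},s,k_{n},t)$ -- a harmless relabelling. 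Deferring the two structural facts about $P^{+}$ (that it commutes with the column and that $\{\phi_t\}$ is a basis of its image) to the cited reference is consistent with the level of detail the paper itself adopts.
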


\subsection{Weights for the definition $\GG$}

We recall the steps required to obtain the weights $\mathbb{W}_{x;(u,v)}(a,b,c,d)$ defined in Equation~\ref{weights:spinL_uv}, which are used to define the polynomials $\mathfrak{G}^{\buv}_{\lambda/\mu}$. To derive $\mathbb{W}_{x;(u,v)}$, we start from the general weights in~\ref{theorem:weightsoftheYBE}, apply the substitutions $x \mapsto x q^{-\l}$, $q^{-\m} = uv$, and $y = v$, then divide the resulting weights by $u^{d}$, and finally take the limit $q^{-\l} \mapsto 0$.

\begin{prop}
The transfer matrices $\TT$ and $\T$ satisfy the following commutation relation:
\begin{equation}\label{eq:communtation_T_and_TT_appendix}
\TT(x)\,\T(y)\,=\,\dfrac{1}{(x y;q)_{\infty}}\,\T(y)\,\TT(x)  
\end{equation}
whenever
\[
|x|,|y|<\,1.
\]
\end{prop}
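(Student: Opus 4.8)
The plan is to derive this fused exchange relation from the spin-$1$ exchange relation of Proposition~\ref{prop:commutationrelation_spin1_Cauchy} by means of the fusion procedure recalled above. Recall that $\mathbb{W}_{x;(u,v)}$, and hence the transfer matrix $\mathbb{T}(y)$, is the $q^{-\l}\to 0$ degeneration of the fused weight $W_{\l,\m}$, while $\mathbb{W}^{*}_{x;(u,v)}$, and hence $\mathbb{T}^{*}(x)$, is obtained from $\mathbb{W}_{x;(v,u)}$ by the gauge transformation~\eqref{weights:spinL_dual_weights}. Since the gauge factor in~\eqref{weights:spinL_dual_weights} depends only on the (quantum-space) boundary labels of a row and is therefore unaffected by fusion --- which acts on the rapidity line only --- the fusion identity of this appendix expresses $\mathbb{T}(y)$ as an $\l\to\infty$ limit of a fusion combination, weighted by the auxiliary boundary factors $q^{\sum(m-1)a_m}/Z_j(\l)$, of the spin-$1$ transfer matrices $\mathrm{T}(\eta_0),\dots,\mathrm{T}(\eta_{\l-1})$, and likewise $\mathbb{T}^{*}(x)$ as such a limit built from the dual spin-$1$ transfer matrices $\mathrm{T}^{*}(\xi_0),\dots,\mathrm{T}^{*}(\xi_{\l-1})$; here $\{\eta_j\}$ and $\{\xi_k\}$ are geometric progressions of ratio $q$, and all rows carry the same column data $(u_i,v_i)$.

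With this in hand I would, for fixed $\l$, substitute these fusion expansions into $\bra{\mu'}\,\mathbb{T}^{*}(x)\,\mathbb{T}(y)\,\ket{\nu'}$ (for arbitrary partitions $\mu,\nu$, which suffices to check the operator identity on $\mathbb{V}(\infty)$), thereby writing it as the corresponding doubly weighted sum of matrix elements of the fixed-order product $\mathrm{T}^{*}(\xi_0)\cdots\mathrm{T}^{*}(\xi_{\l-1})\,\mathrm{T}(\eta_0)\cdots\mathrm{T}(\eta_{\l-1})$. Proposition~\ref{prop:commutationrelation_spin1_Cauchy} applies to each adjacent pair consisting of a dual spin-$1$ row and a spin-$1$ row, so I would carry the whole block of $\mathrm{T}(\eta_j)$'s to the left past the whole block of $\mathrm{T}^{*}(\xi_k)$'s (never reordering within a block), at the cost of an overall scalar $\prod_{k,j}(1-q\xi_k\eta_j)/(1-\xi_k\eta_j)$, which is independent of the internal edge labels and of the fusion weights and hence factors out of the sum. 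Once the bookkeeping of conventions is arranged so that $\xi_k\eta_j=xy\,q^{\,k+j}$, this scalar equals $\prod_{k,j}(1-xyq^{\,k+j+1})/(1-xyq^{\,k+j})$, which telescopes: grouping the factors by $m=k+j$, the exponent of $(1-xyq^{m})$ is $-1$ for $0\le m\le \l-1$ while the remaining factors tend to $1$ as $\l\to\infty$, so the scalar converges to $\prod_{m\ge 0}(1-xyq^{m})^{-1}=1/(xy;q)_{\infty}$. Reassembling the re-ordered products back into $\mathbb{T}(y)\,\mathbb{T}^{*}(x)$ then yields the claimed identity.

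The combinatorics here is light; the substantive work, and the main obstacle, is analytic. One must justify exchanging the fusion limit $\l\to\infty$ with the sums over internal edge occupations, which are already infinite once everything is lifted to $\mathbb{V}(\infty)$, and one must check that each shifted parameter $\xi_k,\eta_j$ remains in the region in which Proposition~\ref{prop:commutationrelation_spin1_Cauchy} (and the geometric series underlying it) is valid --- this is precisely where the hypothesis $|x|,|y|<1$ enters, since it keeps the relevant quantities strictly inside the unit disc for all $k,j\ge 0$ and secures absolute convergence of every series in play. Because the resulting relation is rational in the $u_i,v_i$, one can argue it first in whatever parameter range makes the estimates cleanest and then extend it by analytic continuation, so that the only genuine restriction is $|x|,|y|<1$ as stated. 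An alternative, more geometric route would imitate the proof of Proposition~\ref{prop:commutationrelation_dual_Cauchy} directly: derive from the Yang--Baxter equation~\eqref{eq:YBE}, in the doubly-fused limit and using~\eqref{weights:spinL_dual_weights}, an $RLL$-relation pairing $\mathbb{W}_{y;(u,v)}$ with $\mathbb{W}^{*}_{x;(u,v)}$ through a crossing $R$-matrix, attach that crossing at the left boundary of the two-row lattice, transport it through all the columns, and evaluate the resulting boundary weight; because the two fused auxiliary lines may carry arbitrarily many particles, that weight is the full infinite product $\prod_{k\ge 0}(1-xyq^{k})^{-1}=(xy;q)_{\infty}^{-1}$ --- recoverable, once more, by fusion from the spin-$1$ boundary weight $1/(1+xy)$ of Proposition~\ref{prop:commutationrelation_dual_Cauchy} --- rather than the rational factor that appears in the spin-$1$ exchange relations.
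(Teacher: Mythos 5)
Your first route is essentially the paper's own proof: the paper likewise expands both fused rows into geometric progressions of spin-$1$ rapidities, iterates the exchange relation of Proposition~\ref{prop:commutationrelation_spin1_Cauchy} to move the whole $\mathrm{T}$-block past the $\mathrm{T}^{*}$-block, observes that the resulting scalar telescopes to $\frac{(xyq^{J};q)_{I}}{(xy;q)_{I}}$, and sends it to $1/(xy;q)_{\infty}$ in the fusion limit. The only differences are presentational --- the paper resolves your ``bookkeeping of conventions'' by rewriting that finite ratio as a ratio of four infinite $q$-Pochhammer symbols before performing the substitutions $x\mapsto xq^{-I}$, $y\mapsto yq^{-J}$ and taking $q^{-I},q^{-J}\to 0$, and it is silent on the limit-interchange and convergence issues you rightly flag.
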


\begin{proof}

We begin by performing the following substitutions in the general $W_{1,\m}$ weights in~\ref{weights:spin1_row_weights}: we take  $q^{-\m}=uv$ and $y=v$ and divide the weights by $u$ whenever the right edge is occupied. The resulting weights after these substitutions can be written in terms of the weights $\mathrm{W}_{x;(-u,-v)}$ which are defined in~\ref{weights:spin1_uv_generic}. Then the weights $\mathbb{W}_{x;(u,v)}$ can be obtained by performing the fusion procedure to $\mathrm{W}_{x;(-u,-v)}$ and then substituting $x\mapsto x q^{-\l}$ and then taking $q^{-\l}\mapsto 0$.

Observe that the commutation relation between the transfer matrices~\ref{eq:communtation_t_and_tt} remains the same even when one uses weights $\mathrm{W}_{x;(-u,-v)}$ instead of $\mathrm{W}_{x;(u,v)}$. We begin with the commutation relation between the matrices $\t$ and $\tt$:
\begin{equation}
\tt(x_{1}) \cdots \tt(x_{I})\,\t(y_{1}) \cdots \t(y_{J})
= \prod_{i=1}^{I} \prod_{j=1}^{J} \frac{1 - q x_{i} y_{j}}{1 - x_{i} y_{j}} \,
\t(y_{1}) \cdots \t(y_{J})\, \tt(x_{1}) \cdots \tt(x_{I}).
\end{equation}

We first specialise the $y$-variables as $y_{j} = y q^{j-1}$ for $1 \leq j \leq J$.  
Due to telescoping cancellations, this simplifies to
\begin{equation}
\tt(x_{1}) \cdots \tt(x_{I}) \, \t(y) \t(yq) \cdots \t(yq^{J-1})
= \prod_{i=1}^{I} \frac{1 - q^{J} x_{i} y}{1 - x_{i} y} \,
\t(y) \cdots \t(yq^{J-1}) \, \tt(x_{1}) \cdots \tt(x_{I}).
\end{equation}

Next, we specialise the $x$-variables as $x_{i} = x q^{i-1}$ for $1 \leq i \leq I$, yielding
\begin{multline}
\tt(x)\tt(xq)\cdots \tt(xq^{I-1}) \, \t(y)\t(yq)\cdots \t(yq^{J-1}) \\
= \frac{(xyq^{J};q)_{I}}{(xy;q)_{I}} \,
\t(y) \cdots \t(yq^{J-1}) \,
\tt(x)\tt(xq)\cdots \tt(xq^{I-1}).
\end{multline}

We now invoke the identity
\[
\frac{(xyq^{J};q)_{I}}{(xy;q)_{I}}
= \frac{(xyq^{J};q)_{\infty}(xyq^{I};q)_{\infty}}
{(xy;q)_{\infty}(xyq^{I+J};q)_{\infty}},
\]
which transforms the relation into
\begin{multline}\label{eq:before_fusion}
\tt(x)\tt(xq)\cdots \tt(xq^{I-1}) \,\t(y)\t(yq)\cdots \t(yq^{J-1}) \\
= \frac{(xyq^{I};q)_{\infty}(xyq^{J};q)_{\infty}}
{(xyq^{I+J};q)_{\infty}(xy;q)_{\infty}} \,
\t(y)\cdots \t(yq^{J-1}) \,
\tt(x)\tt(xq)\cdots \tt(xq^{I-1}).
\end{multline}

To~\eqref{eq:before_fusion} we apply the following steps:
\begin{enumerate}
    \item Perform the fusion procedure,
    \item Substitute $x \mapsto x q^{-I}$ and $y \mapsto y q^{-J}$, and
    \item Take the limits $q^{-I} \to 0$ and $q^{-J} \to 0$.
\end{enumerate}

Carrying out these steps yields the desired commutation relation:
\[
\TT(x)\,\T(y) = \frac{1}{(xy;q)_{\infty}} \, \T(y)\,\TT(x).
\]
\end{proof}

\

\bibliographystyle{plain}
\bibliography{references}{}

\end{document}